\newtheorem{Thm}{Theorem}[section]
\newtheorem{Lem}[Thm]{Lemma}
\newtheorem{Prop}[Thm]{Proposition}
\newtheorem{Def}[Thm] {Definition}
\newtheorem{Cor}[Thm]{Corollary}
\newtheorem{Rem} [Thm]{Remark}
\theoremstyle{claim}
\DeclareMathOperator{\Diff}{Diff}
\def\u{{\cal U}}
\def\v{{\cal V}}
\def\e{{\cal E}}
\def\r{{\cal R}}
\begin{document}
\begin{center}
{\Large \bf Some results on perturbations to Lyapunov exponents}\\
\smallskip
CHAO LIANG$\dagger$ and
WENXIANG SUN$\ddagger$ and
JIAGANG YANG$\ast$\\
\smallskip
$\dagger$Applied Mathematical Department, The Central University of Finance
and Economics, Beijing 100081, China\\
(chaol@cufe.edu.cn)\\

$\ddagger$LMAM, School of Mathematical Sciences, Peking University, Beijing 100871, China\\
(sunwx@math.pku.edu.cn)\\
$\ast$Departamento de Geomoetria, Instituto de Matem$\acute{a}$tica,
Universidade Federal Fluminense, 24020-140, Niter$\ddot{o}$i,
Brazil\\
( yangjg@impa.br)

\smallskip

\end{center}

\footnotetext {$\dagger$Liang is supported by NNSFC(\# 10901167 )}
\footnotetext {$\ddagger$Sun is supported by NNSFC(\# 10231020) and
National Basic Research Program of China(973 Program)(\#
2006CB805903) } \footnotetext{ Key words and phrases: integrated
Lyapunov exponent, dominated splitting, Oseledec splitting }
\footnotetext {AMS Review: 37C40; 37D25; 37H15; 37A35}

\bigskip

\begin{abstract}
In this paper, we study two properties of the Lyapunov exponents under small perturbations: one is when we can remove zero
 Lyapunov exponents and the other is when we can distinguish all the Lyapunov exponents. The first result shows that we
 can perturb all the zero integrated Lyapunov exponents $\int_M \lambda_j(x)d\omega(x)$ into nonzero ones, for any  partially hyperbolic
 diffeomorphism. The second part contains an example which shows the local genericity of  diffeomorphisms with non-simple spectrum and three results: one discusses the relation between simple-spectrum property and the existence of complex eigenvalues; the other two describe the difference on the spectrum between the diffeomorphisms far from homoclinic tangencies and those in the interior of the complement. Moreover, among the conservative diffeomorphisms far from tangencies, we prove that ergodic ones form a residual subset.
 \end{abstract}

\section{Introduction}
\bigskip

It was shown in the 1960s that uniformly hyperbolic systems are not dense in the space of dynamical systems. This brought about the naissance of the notions of partial and nonuniform hyperbolicity. Using the concept of Lyapunov exponents, Pesin theory of nonuniformly hyperbolic systems(characterized by all the Lyapunov exponents are non-null for some invariant measure) gives us a rich information about geometric properties of the system. In particular, the points with all Lyapunov exponents non-zero have well-defined unstable and stable invariant manifolds. These tools are the base of most of the results on dynamical systems nowadays. Thus it is of utmost importance to detect when the zero exponents can be removed by perturbations.

One central result in this direction for discrete system is the
Shub-Wilkinson example\cite{SW}. They build a conservative
perturbation to a skew product of an Anosov diffeomorphism of the
torus $\mathcal{T}^2$ by rotations and creates positive exponents in
the center direction for Lebesgue almost every point.
Baraviera-Bonatti present a local version of Shub-Wilkinson's
argument, allowing one to perturb the sum of all the center
integrated Lyapunov exponents of any conservative partially
hyperbolic systems.

Highly inspired by their results we prove in the present  paper
that 
one can perturb every integrated Lyapunov exponent $\int_M
\lambda_j(x)d\omega(x)$ of any conservative partially hyperbolic
systems to nonzero ones, not only the {\bf sum} of them.  This is a
generalization to Baraviera-Bonatti\cite{BB}.

Let $M$ be a $d-$dimensional compact Riemannian manifold without
boundary ($d\geq2$) and $\omega$ be a smooth volume form. Denote by
$\Diff^1_\omega(M)$ the set of volume preserving
$C^1-$diffeomorphisms on $M$. Take $f\in \Diff^1_\omega(M).$ By
Oseledec Theorem, there exists a $Df$-invariant decomposition
$$T_xM=\bigoplus\limits_{i=1}^{s(f,x)} E^i(x)\quad\omega-a.e.x\in M\eqno(1.1)$$ such that
 $$\lim\limits_{m\to
\pm\infty}\frac{1}{m}log\|d_xf^m\nu\|=\lambda_i(f,x)$$ converges
uniformly on $\{\nu\in E^i(x):\|\nu\|=1\}$, where $1\leq s(f,x)\leq
d$. This number $\lambda_i(f,x)$ is called the Lyapunov exponent
associated with $E^i$. Lyapunov exponents describe the asymptotic
evolution of tangent map: positive or negative exponents correspond
to exponential growth or decay of the norm, respectively, whereas
vanishing exponents mean lack of exponential behavior.  None of
these values depends on the choice of a Riemannian metric.

Throughout this paper, let $\lambda_{1}(f,x)\geq\lambda_2(f,x)\geq
\cdots\geq\lambda_d(f,x)$ be the Lyapunov exponents in nonincreasing
order and each repeated with multiplicity $dimE^i(x)$. We define
{\bf the $j-th-$integrated exponent} by $\int_M
\lambda_j(f,x)d\omega(x)$, for any $1\leq j\leq d$.


\bigskip

A $Df-$invariant splitting $TM=E^1\oplus\cdots\oplus E^k$ is called
a {\em dominated splitting} if each $E_i$ is a continuous
$Df-$invariant subbundle of $TM$ and if there is some integer $n>0$
such that, for any $x\in M$, any $i<j$ and any non-zero vectors
$u\in E^i(x)$ and $\nu\in E^j(x)$, one has
$$\frac{\|Df^n(u)\|}{\|u\|}<\frac{1}{2}\frac{\|Df^n(\nu)\|}{\|\nu\|}.$$

Let $\mathcal{PH}^1_\omega(M)$ denote the subset of
$\Diff^1_\omega(M)$ consisting of all the partially hyperbolic
diffeomorphisms.
In this paper, partially hyperbolic means the following. There is a nontrivial splitting of the tangent bundle, $TM=E^u\oplus E^c\oplus E^s$, that is invariant under the derivative map $Df$. Further, there is a Riemannian metric for which we can choose continuous positive functions $\nu$, $\tilde\nu$, $\gamma$ and $\tilde\gamma$ with \begin{eqnarray*}
\nu,\tilde\nu<1\mbox{ and } \nu<\gamma<{\tilde\gamma}^{-1}<{\tilde\nu}^{-1}
\end{eqnarray*}
such that, for any unit vector $\upsilon\in T_pM$,

\setcounter{equation}{1}
\begin{eqnarray}
&\|Df\upsilon\|&<\nu(p), \mbox{ if } \upsilon\in E^s(p),\\
\gamma(p)<&\|Df\upsilon\|&<{\tilde\gamma(p)}^{-1},\mbox{ if }\upsilon\in E^c(p),\\
{\tilde\nu (p)}^{-1}<&\|Df\upsilon\|&, \mbox{ if }\upsilon\in E^u(p).
\end{eqnarray}

\begin{Thm}\label{Thm:nonzeroexp}\,\, Let
$f\in\mathcal{PH}^1_\omega(M)$.
Then for any
neighborhood $\u\subseteq\mathcal{PH}^1_\omega(M)$ of $f$, there is a
diffeomophism $g\in\u$, such that every integrated Lyapunov exponent
is different from zero, i.e.,
$$\int_M \lambda_j(g,x)d\omega(x)\neq0,\quad\forall 1\leq j\leq d.$$
\end{Thm}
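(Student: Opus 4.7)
The plan is to reduce the statement to perturbing only the central integrated exponents, and then to combine an iterated Baraviera--Bonatti construction with a Bochi--Ma\~{n}\'{e}-type dichotomy along a refinement of the dominated splitting.

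First, I would exploit the uniform estimates (1.2)--(1.4), which persist for every $g$ in a small enough $C^1$-neighborhood of $f$ (with $E^u(g)$ and $E^s(g)$ close to those of $f$). Iterating (1.4) along an orbit yields, for each Lyapunov exponent $\lambda_j(g,x)$ associated to $E^u(g)$, the pointwise bound $\lambda_j(g,x)\ge -\lim_{n}\tfrac{1}{n}\sum_{i=0}^{n-1}\log\tilde\nu(g^i x)$, so that
\[
\int_M \lambda_j(g,x)\,d\omega(x)\;\ge\;-\int_M \log\tilde\nu(x)\,d\omega(x)\;>\;0;
\]
a symmetric argument from (1.2) gives a strictly negative upper bound for the integrated exponents associated to $E^s(g)$. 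Hence the unstable and stable integrated exponents are automatically nonzero inside the neighborhood, and only the $c:=\dim E^c$ central integrated exponents need to be handled.

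Next, I would invoke the Baraviera--Bonatti theorem to obtain $g_0\in\mathcal{U}$ for which $\sum_j \int_M \lambda_j(g_0,x)\,d\omega(x)\neq 0$, the sum being over the central indices. After a further $C^1$-small perturbation I may assume that $E^c(g_0)$ carries a \emph{finest} dominated sub-splitting $E^c=F_1\oplus\cdots\oplus F_k$ (possibly trivial, $k=1$). For each factor the partial sum of central integrated exponents along $F_i$ equals $\int_M \log|\det(Dg|_{F_i(x)})|\,d\omega(x)$, which depends continuously on $g$. I would then adapt the Baraviera--Bonatti construction by localising the perturbation near a suitable periodic orbit inside a chart aligned with $F_i$, so as to modify the determinant integral on one factor at a time; iterating over $i=1,\ldots,k$ produces a $g_1\in\mathcal{U}$ for which the sum of integrated exponents on each $F_i$ is nonzero.

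Finally, within each $F_i$ no further dominated sub-splitting is available by maximality, so I would appeal to a Bochi--Ma\~{n}\'{e}-type dichotomy in the conservative $C^1$ setting. Either a small perturbation makes all Lyapunov exponents along $F_i$ coincide, in which case each equals the (already nonzero) average $\tfrac{1}{\dim F_i}\sum_{j\in F_i}\int \lambda_j\,d\omega$; or the perturbation creates a proper dominated sub-splitting of $F_i$, and one recurses on the new factors. Since every recursion strictly decreases the rank of the bundle under consideration, the procedure terminates after at most $c$ rounds and furnishes the desired $g\in\mathcal{U}$. The main technical obstacle is compatibility: each localised perturbation must be small enough, in both $C^1$-norm and support, not to spoil the nonzero determinant integrals already secured on the other factors. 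Quantitatively controlling the size and support of successive perturbations, and exploiting the continuity of the partial determinants associated with the successively refined dominated splittings, is where the real work of the proof should lie.
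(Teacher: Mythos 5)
Your plan and the paper's proof share the same skeleton (reduce to the center bundle, apply Baraviera--Bonatti to make each block's determinant integral nonzero, then argue that within each block of the finest dominated splitting the exponents must coincide), but the final step of your plan has a genuine gap, and the way the paper closes it is one of the main ideas of the argument.

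The gap is in your appeal to a Bochi--Ma\~n\'e/Bochi--Viana dichotomy ``within each $F_i$.'' That dichotomy (Theorem~1 in [BV], quoted as Lemma~3.1 in the paper) says that generically, for $\omega$-a.e.\ $x$, the Oseledec splitting is either trivial or \emph{dominated along the orbit of $x$}. This is not the same as saying $F_i$ carries no global dominated sub-splitting. A priori a positive-measure set of orbits could each carry an $m$-dominated sub-splitting of $F_i$ without these sub-splittings coherently assembling into a dominated splitting of $F_i$ over all of $M$. To rule this out you must know that $\omega$-a.e.\ orbit is dense, so that any orbit-wise dominated splitting extends by continuity to all of $M$ and then contradicts maximality of $F_1\oplus\cdots\oplus F_k$. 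The paper obtains this density from \emph{accessibility}: by Dolgopyat--Wilkinson one first perturbs to a stably accessible $f_1$, and accessibility (hence essential accessibility) forces a.e.\ orbit to be dense. Your proposal never invokes accessibility, so the needed implication ``no global dominated sub-splitting of $F_i$ $\Rightarrow$ Oseledec splitting trivial a.e.\ inside $F_i$'' is unavailable.

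The paper also avoids your iterated ``equalize-then-recurse'' perturbation scheme (whose compatibility concerns you correctly flag but leave unresolved) by a cleaner device: after securing robustly nonzero determinant integrals on an open set $\u_3$, it uses the \emph{upper semi-continuity} of $g\mapsto\int_M\Lambda_p(g,x)\,d\omega$ and Proposition~4.17 of [BV] (Lemma~2.5). At a continuity point $h\in\u_3$ of the map $g\mapsto(\int\Lambda_1,\dots,\int\Lambda_d)$, the quantity $J_p(h)$ must vanish, which together with $\Gamma_p(h,\infty)=M\bmod 0$ (deduced from accessibility and the robust finest dominated splitting) forces $\lambda_p(h,x)=\lambda_{p+1}(h,x)$ a.e.\ within each block. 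No further perturbation is made, so nothing needs to be ``protected.'' If you want to pursue your route you would need to (i) add the accessibility/density-of-orbits ingredient before invoking the Bochi--Viana dichotomy, and (ii) either establish and control the exponent-equalizing perturbations explicitly, or replace them by the continuity-point argument, which is essentially the paper's proof.
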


\bigskip

Much work on perturbation of Lyapunov exponents concerns two basic
topics. One is when we can remove zero Lyapunov exponents as we
discussed above. The other is when we can distinguish all the
Lyapunov exponents. That is, the spectrum is simple. About the
latter topic, all work are concentrated for cocycles. In the case of
independent and identically distributed random matrices, Arnold and
Cong\cite{AC1} showed that the cocycles with simple Lyapunov
spectrum form a residual set. In the space of all linear cocycles
equipped with the uniform topology, Knill\cite{K} (for the
2-dimensional case) and Arnold and Cong\cite{AC2} (for the general
$d-$dimensional case) proved that they form a dense set. For
$SL(d,\,\mathbb{R})-$cocycles, Bonatti and Viana\cite{BoV} also
showed the  density property under some conditions. It is natural to
ask  whether the   diffeomorphisms with simple spectrum form a dense
set in $\Diff^1_\omega(M)$. In Section 3, we construct an
example related to this problem and show the local genericity of diffeomorphisms with non-simple spectrum. Using the techniques developed in \cite{BDP} and \cite{LL}, we prove a dichotomy between simple-spectrum property and the existence of complex eigenvalues.
\smallskip

\begin{Thm}\label{Thm:dichotomy}
There is a residual subset $\mathcal{R}\subset \mathcal{PH}^1_\omega(M)$ of diffeomorphisms $f$ such that
\begin{enumerate}
\item[$\bullet$]either the finest dominated splitting of $E^c$ is of the form $$E^c(x)=\oplus^c_{i=1}E^{ci}(x)\mbox{ with }dim E^{ci}(x)=1,\mbox{ for all }i\in\{1,2,...,c\},\eqno(1.5)$$ $\omega-a.e.x\in M,$ where $c=dim E^c$.
    Hence, the multiplicity of every center Lyapunov exponent of $f$ is 1;

\item[$\bullet$]or for any $\varepsilon>0$, there is an $\varepsilon-$perturbation $g\in \mathcal{PH}^1_\omega(M)$ of $f$ and a periodic point $q$ of $g$ such that $Dg^{p(q)}_q$ has a complex center eigenvalue
, where $p(q)$ is the period of $q$.
\end{enumerate}
\end{Thm}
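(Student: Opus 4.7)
The plan is to define $\mathcal{R}$ as the set of continuity points of an integer-valued invariant controlling the finest dominated splitting on $E^c$, and then verify the dichotomy at each $f\in\mathcal{R}$.

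First I would recall that a dominated splitting with prescribed dimensions passes to $C^1$-limits: by compactness of the Grassmannian bundle and closedness of the domination inequality, if $f_n\to f$ with $E^c(f_n)=F^1_n\oplus\cdots\oplus F^k_n$ dominated of dimensions $(d_1,\dots,d_k)$, then $E^c(f)$ admits a dominated splitting with the same dimension vector. Consequently the map $f\mapsto k(f)$ counting summands in the finest center dominated splitting is lower semicontinuous on $\mathcal{PH}^1_\omega(M)$, so its continuity set $\mathcal{R}_0$ is residual and on $\mathcal{R}_0$ the finest center splitting (and its dimension vector) is locally constant.

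Now fix $f\in\mathcal{R}_0$. If every piece of the finest center dominated splitting at $f$ is one-dimensional, then (1.5) is satisfied and we are in the first alternative. Otherwise some piece $E^{ci}$ has $\dim E^{ci}\ge 2$, and by local constancy $Df|_{E^{ci}}$ admits no further dominated splitting under $C^1$-small perturbations that remain in $\mathcal{R}_0$. At this point I would apply the BDP mechanism of \cite{BDP}, in its volume-preserving form via the conservative Franks-type lemma of \cite{LL}: absence of dominated splitting inside $E^{ci}$ yields, after an arbitrarily small $C^1$ perturbation $g\in\mathcal{PH}^1_\omega(M)$, a periodic orbit $\mathcal{O}(q)$ whose return map inside (the continuation of) $E^{ci}$ still admits no dominated splitting; a further small volume-preserving Franks perturbation supported in a tubular neighborhood of $\mathcal{O}(q)$ and acting only in $E^{ci}\subset E^c$ then rotates the return map, producing a pair of complex conjugate eigenvalues. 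Since the perturbation is confined to $E^c$, the decomposition $E^u\oplus E^c\oplus E^s$ persists, so $g\in\mathcal{PH}^1_\omega(M)$. The residual set $\mathcal{R}$ is obtained by intersecting $\mathcal{R}_0$ with the countable family of open-dense sets expressing the second alternative at precisions $\varepsilon=1/n$, together with the open set where (1.5) already holds.

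The main obstacle is the construction of the perturbed $g$ together with its periodic orbit inside the conservative partially hyperbolic class. Step (a) --- finding a periodic orbit along which no dominated splitting inside $E^{ci}$ has formed --- requires the full force of the BDP transition arguments \cite{BDP}, which convert the pointwise failure of domination into failure of domination along a periodic orbit; this has to be executed in $\Diff^1_\omega(M)$, which is the delicate conservative adaptation. Step (b) --- realising the eigenvalue rotation by a $C^1$-small volume-preserving perturbation confined to the subbundle $E^{ci}$ --- is precisely what the conservative Franks-type lemma of \cite{LL} delivers. Combining (a) and (b) yields the second alternative at every $f\in\mathcal{R}_0$ not falling under (1.5), completing the proof.
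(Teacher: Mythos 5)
Your approach diverges from the paper's: where the paper starts from Bonatti--Crovisier's conservative genericity theorem (Lemma~\ref{Lem:Onehomclass}), you instead build a residual set $\mathcal{R}_0$ out of the continuity points of the number $k(f)$ of summands in the finest center dominated splitting. That part of your argument, once the semicontinuity is stated correctly, is sound: $k$ is \emph{lower} semicontinuous because a dominated splitting of $f$ persists, with the same dimension vector, for all $g$ $C^1$-close to $f$; your stated justification via a passage to the limit $f_n\to f$ would (if anything) give upper semicontinuity and in fact fails without a uniform domination time, so you should replace it with the plain robustness argument. The residuality of $\mathcal{R}_0$ and the local constancy of the finest center splitting on it do follow, and this gives a cleaner handle on what ``finest dominated splitting'' means than the paper's implicit treatment.

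The genuine gap is in what you call step~(a). You assert that ``absence of dominated splitting inside $E^{ci}$ yields, after an arbitrarily small $C^1$ perturbation, a periodic orbit $\mathcal{O}(q)$ whose return map in $E^{ci}$ still admits no dominated splitting,'' attributing this to the ``BDP transition arguments.'' But the transition mechanism (as in Bonatti--D\'iaz--Pujals, or in Lemma~\ref{Thm:complexeigenvalue} from~\cite{LL} which the paper actually uses) operates inside a homoclinic class: it requires a transitive set with a dense family of periodic orbits connected by transitions, so that the failure of an $\ell$-dominated splitting on that set forces failure along a single periodic orbit with period-long uniform non-domination. Nothing in your $\mathcal{R}_0$ gives you this. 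The paper supplies exactly this missing ingredient by intersecting with the Bonatti--Crovisier residual set $G_\omega$, so that generically $M$ itself is the unique homoclinic class; then periodic orbits are dense in $M$ and, were the $\ell$-dominated splitting present over all of them, it would extend by continuity to all of $M$, contradicting the assumption. You must incorporate this (or an equivalent substitute) to make step~(a) go through. Also note that in this paper's bibliography $\cite{BDP}$ is Burns--Dolgopyat--Pesin; the transition machinery you intend to invoke is in $\cite{BoDP}$ and, in the form actually used here, in $\cite{LL}$. Finally, the closing sentence about intersecting $\mathcal{R}_0$ with countably many open dense sets indexed by $\varepsilon=1/n$ is superfluous: if you show the second alternative holds at every $f\in\mathcal{R}_0$ not satisfying~(1.5), then $\mathcal{R}_0$ itself already serves as $\mathcal{R}$.
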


\bigskip

In the previous theorem, the first case is that each invariant subbundle in the finest dominated splitting is 1-dimensional. For the center submanifold, this property is much stronger than that of simple spectrum. Results in \cite{W}\cite{LLS}\cite{LL2} indicates that, roughly speaking, dominated splitting and homoclinic tangency are mutually exclusive concepts. We prove two results which suggest that diffeomorphisms far from homoclinic tangencies and in the interior of the complement load bearing entirely different properties. Moreover, applying the novel approach developed by F.R.Hertz, M.A.R.Hertz, A.Tahzibi, and R.Ures\cite{HHTU, HHTU2} for Pugh-Shub's stable ergodicity conjecture, we point out the genericity of the ergodic diffeomorphisms in the conservative ones which are far from tangencies.
\smallskip

We say a diffeomorphism $f$ has $C^1$ homoclinic tangency if $f\in \Diff^1(M)$ has hyperbolic periodic point $p$ at which the stable and unstable invariant manifolds $W^s(p)$ and $W^u(p)$ intersect non-transversely. Denote by $HT$ the set containing all the diffeomorphisms with homoclinic tangencies. We call a diffeomorphism $f$ is far away
from tangency if $f\in\Diff^1(M) \setminus \overline{HT}$.

In the context of the following two results, we suppose $d=dim M\geq 3$.

\smallskip

\begin{Thm}\label{t.ergodicfarfromtangency}
 There is a $C^1$ residual subset $\mathcal{R}$ of the volume preserving diffeomorphsims far from
tangencies, such that any $f\in \mathcal{R}$ admits a finest dominated splitting
$E^s\oplus E^{c1}\oplus \cdots \oplus E^{cc}\oplus E^u$, where $dim(E^{ci})=1$ and $c=dimE^c$.
Moreover, $f$ is ergodic.
\end{Thm}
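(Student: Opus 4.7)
The plan splits the theorem into two assertions to be proved on a common $C^1$ residual subset: first, the existence of a finest dominated splitting of the stated form with one-dimensional center bundles; second, ergodicity. The overall strategy is to combine the dichotomy already established in Theorem \ref{Thm:dichotomy} with the generic consequences of being far from tangencies and with the accessibility program of Hertz--Hertz--Tahzibi--Ures \cite{HHTU,HHTU2}.

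First I would show that a conservative diffeomorphism $C^1$-far from tangencies is, generically, partially hyperbolic with a well-defined finest dominated splitting. The starting point is the body of results around \cite{W,LLS,LL2}: being far from tangencies forces the existence of a nontrivial dominated splitting on the chain-recurrent set, and in the conservative $C^1$-generic setting (using Bochi's dichotomy and the genericity of transitivity) the chain-recurrent set is the whole manifold, so the splitting extends globally. This yields a decomposition of $TM$ into the maximal uniformly contracting bundle, a center piece, and the maximal uniformly expanding bundle; if $d=\dim M\geq 3$ the partially hyperbolic structure $E^s\oplus E^c\oplus E^u$ can be arranged with $E^s$ and $E^u$ nontrivial on a residual set, absorbing the Anosov case as a degenerate endpoint ($c=0$).

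Next, apply Theorem \ref{Thm:dichotomy} inside $\mathcal{PH}^1_\omega(M)$: on a residual subset, either the finest dominated splitting of $E^c$ is a direct sum of one-dimensional subbundles, or arbitrarily small perturbations produce a periodic orbit carrying a complex center eigenvalue. The second alternative is incompatible with being far from tangencies: a complex eigenvalue on a two-dimensional invariant subbundle without any further dominated decomposition inside allows, via a Franks-type conservative perturbation (in the spirit of \cite{BDP,LL}), the creation of a homoclinic tangency at the corresponding periodic saddle, violating $f\in\Diff^1_\omega(M)\setminus\overline{HT}$. Hence on a further residual subset the first alternative holds, delivering the splitting $E^s\oplus E^{c1}\oplus\cdots\oplus E^{cc}\oplus E^u$ with $\dim E^{ci}=1$.

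For the ergodicity conclusion I would follow the scheme of \cite{HHTU,HHTU2}. Once the finest dominated splitting has only one-dimensional center pieces, a Hertz--Hertz--Tahzibi--Ures style argument shows that $C^1$-generically accessibility holds in this class, and then accessibility combined with the one-dimensionality of each $E^{ci}$ and the conservative hypothesis yields ergodicity by their Pugh--Shub-type criterion. Intersecting the three residual subsets produced at the previous steps gives $\mathcal{R}$.

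The main obstacle is the transition from the single one-dimensional center setting in which \cite{HHTU,HHTU2} was originally formulated to the case of a center bundle split into several one-dimensional dominated pieces: one must verify both that accessibility remains $C^1$-generic in this broader class and that the Julienne distortion / measurable-partition arguments giving ergodicity go through when Pesin blocks are refined through several one-dimensional center foliations. A secondary but delicate point is keeping the Franks-style perturbation used to rule out the complex-eigenvalue alternative compatible with volume preservation and with remaining in $\Diff^1_\omega(M)\setminus\overline{HT}$ throughout the deformation.
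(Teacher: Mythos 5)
Your proposal diverges substantially from the paper's actual route, and both of the main steps have genuine gaps.

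\textbf{Dominated splitting.} You propose to derive the $E^s\oplus E^{c1}\oplus\cdots\oplus E^{cc}\oplus E^u$ structure by invoking Theorem~\ref{Thm:dichotomy} and ruling out the complex-eigenvalue alternative because it ``allows the creation of a homoclinic tangency.'' This inference is not justified as stated: a perturbation exhibiting a complex center eigenvalue is not the same as one exhibiting a tangency, and a periodic orbit with a complex eigenvalue can coexist with a dominated splitting (the splitting can simply isolate the two-dimensional rotational plane). To get from ``no finest $1$-dimensional splitting'' to ``tangency'' you need the Wen/Gourmelon-type machinery anyway, and at that point Theorem~\ref{Thm:dichotomy} is not actually doing work for you. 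The paper instead proves Lemma~\ref{Lem:farfromtangency} directly: it combines the conservative version of Abdenur--Bonatti--Crovisier--Diaz--Wen (Lemma~\ref{Lem:ABCDW}, giving that periodic indices in the unique homoclinic class form an interval), the equivalent conditions for dominated splitting of \cite{LLS,W}, and Ma\~n\'e's ergodic closing lemma (to make $E^s$ and $E^u$ uniformly hyperbolic). This is a different and more robust path.

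\textbf{Ergodicity.} You say that ``accessibility combined with the one-dimensionality of each $E^{ci}$ and the conservative hypothesis yields ergodicity by their Pugh--Shub-type criterion,'' and you flag as ``the main obstacle'' whether the Julienne-distortion arguments survive refinement through several one-dimensional center foliations. This is exactly the step your proof does not have. With several one-dimensional dominated center pieces, center bunching is in general false, so the Pugh--Shub/Burns--Wilkinson theorem (Lemma~\ref{Lem:ergodic}) does not apply, and accessibility alone is known to be insufficient. The paper's approach is not Julienne-based at all: it builds a \emph{chain of $cs$- and $cu$-blenders} across all intermediate indices (Propositions~\ref{p.blenderfarfromtangencies} and \ref{p.chainblender}), uses Theorem~\ref{Thm:nonzeroexp} to arrange that the $c(f)$ integrated center Lyapunov exponents are nonzero and ordered (property (G4)), deduces that the ``nonuniformly stable'' and ``nonuniformly unstable'' Pesin blocks $\Lambda^{cs},\Lambda^{cu}$ together have full volume, and shows via the blender chain that the Pesin stable/unstable manifolds of a.e.\ point transversally cross the stable/unstable manifold of a single periodic saddle. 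This verifies the Hertz--Hertz--Tahzibi--Ures criterion (Proposition~\ref{p.criteria}), which is a different criterion from the Pugh--Shub one and was designed precisely to avoid center bunching. After establishing ergodicity for $C^{1+\alpha}$ members via this criterion, density of $C^\infty$ (Lemma~\ref{Lem:denseofC2}) gives $C^1$-density of ergodicity, and since ergodic diffeomorphisms form a $G_\delta$ this upgrades to residuality. None of the blender construction, the Lyapunov exponent arrangement, or the Pesin-manifold crossing argument appears in your sketch, and these are precisely the ingredients needed to close the gap you yourself identified.
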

\bigskip

\begin{Cor}There is a $C^1$ residual subset $\mathcal{R}$ of the volume preserving diffeomorphsims far from
tangencies, such that each center Lyapunov exponent of $f\in \mathcal{R}$ has multiplicity 1.
\end{Cor}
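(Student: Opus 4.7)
The strategy will be to let $\mathcal{R}$ be exactly the residual set furnished by Theorem~\ref{t.ergodicfarfromtangency} and to read off the multiplicity statement directly from the structural information that theorem already provides. For any $f \in \mathcal{R}$ we are given a finest dominated splitting $TM = E^s \oplus E^{c1} \oplus \cdots \oplus E^{cc} \oplus E^u$ with $\dim E^{ci} = 1$ for every $i$, together with ergodicity of $f$ with respect to $\omega$. Ergodicity ensures that the Lyapunov spectrum $\lambda_1(f,x) \geq \cdots \geq \lambda_d(f,x)$ is constant on a full $\omega$-measure set, so the \emph{multiplicity} of each center exponent is a well-defined integer.

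The first observation will be that on every one-dimensional bundle $E^{ci}(x)$ the Oseledec limit collapses to a single number $\lambda^{ci}(f)$, so the contribution of $E^{ci}$ to the center spectrum is a single exponent. The only substantive step will be to argue that the $c$ numbers $\lambda^{c1}(f), \ldots, \lambda^{cc}(f)$ are pairwise distinct. This will follow from the defining inequality of dominated splitting as recalled in the introduction: for $i < j$ and unit vectors $u \in E^{ci}(x)$, $v \in E^{cj}(x)$ one has $\|Df^n u\| < \tfrac12 \|Df^n v\|$ for some fixed $n$, and iterating along a typical orbit yields $\lambda^{ci}(f) \leq \lambda^{cj}(f) - \tfrac{\log 2}{n} < \lambda^{cj}(f)$. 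Combined with the one-dimensionality of each $E^{ci}$, this forces each center Lyapunov exponent to have multiplicity exactly $1$.

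I do not expect a genuine obstacle here: all the real work has been done inside Theorem~\ref{t.ergodicfarfromtangency}, and the corollary is essentially a translation of its geometric conclusion (finest dominated splitting with one-dimensional center subbundles, together with ergodicity) into spectral language. The only point that deserves a line of care is the separation of exponents extracted from the domination inequality, which is the standard estimate sketched above; ergodicity is invoked only to make ``multiplicity'' unambiguous as a number attached to $f$ rather than to a point $x$.
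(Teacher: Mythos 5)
Your proposal is correct and is precisely the intended deduction: the paper states the corollary without proof immediately after Theorem~\ref{t.ergodicfarfromtangency}, relying on exactly the observation you spell out, namely that a finest dominated splitting into one-dimensional center subbundles separates the center Lyapunov exponents by at least $\frac{\log 2}{n}$, with ergodicity making the multiplicities well-defined constants.
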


\bigskip

Recall a partially hyperbolic diffeomorphism is center bunched if the
functions $\nu$, $\tilde\nu$, $\gamma$ and $\tilde\gamma$ in (1.2)-(1.4) can be chosen so that:
$$\nu<\gamma\tilde\gamma\quad\mbox{    and    }\quad \tilde\nu<\gamma\tilde\gamma.$$
Let $\mathcal{CPH}_\omega^1(M)$ denote the set of all center bunching, conservative, partially hyperbolic diffeomorphisms. The statement of the third result about spectrum is the following:

\begin{Thm}\label{Thm:interiorofHT}
There is a $C^1$ residual subset of diffeomorphism $\mathcal{R}$ in
$\mathcal{CPH}_\omega^1(M)\cap int(\overline{HT})$, such that for
any $f\in \mathcal{R}$, it is ergodic, and it has two center
exponents equal.
\end{Thm}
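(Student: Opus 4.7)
\bigskip

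\noindent\textbf{Proof proposal for Theorem \ref{Thm:interiorofHT}.}

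The plan is to construct $\mathcal{R}$ as the intersection of two $C^1$ residual subsets of $\mathcal{CPH}_\omega^1(M)\cap \operatorname{int}(\overline{HT})$: one witnessing ergodicity, the other witnessing the coincidence of two center Lyapunov exponents. I would begin by recording the following dichotomy, which is the heart of the argument: for a $C^1$ generic $f$ in $\mathcal{CPH}_\omega^1(M)$, at $\omega$-almost every $x$ the finest dominated splitting of $E^c$ at $x$ either admits only one-dimensional bundles (in which case Theorem \ref{Thm:dichotomy} gives the first alternative and no complex eigenvalues can be created) or has at least one bundle of dimension $\geq 2$. The strategy is to show that the first alternative is incompatible with $f\in \operatorname{int}(\overline{HT})$, so that on our set only the second alternative can hold, and then to use a Bochi-type result to collapse the exponents inside any such two-dimensional block.

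For the ergodicity half, I would apply verbatim the Hertz--Hertz--Tahzibi--Ures machinery already invoked for Theorem \ref{t.ergodicfarfromtangency}, which the authors cite as \cite{HHTU,HHTU2}. In the center-bunched, accessible, partially hyperbolic, conservative setting, stable ergodicity is $C^1$-dense, and the set of accessible diffeomorphisms is itself $C^1$ open and dense in $\mathcal{CPH}_\omega^1(M)$. Intersecting with $\operatorname{int}(\overline{HT})$, which is $C^1$ open, keeps both properties residual. This furnishes a residual $\mathcal{R}_1\subset \mathcal{CPH}_\omega^1(M)\cap \operatorname{int}(\overline{HT})$ of ergodic diffeomorphisms.

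For the equal-exponents half, the key input is the characterization of homoclinic tangencies via dominated splittings in the spirit of Wen \cite{W} (and its volume-preserving refinements used in \cite{LL2}). Being in $\operatorname{int}(\overline{HT})$ prevents an $f$-invariant finest dominated splitting of $E^c$ into one-dimensional bundles from surviving under perturbations: otherwise Theorem \ref{Thm:dichotomy} would force $f$ into the first alternative there, producing a $C^1$ neighbourhood in which tangencies associated to a hyperbolic periodic point cannot be created by small perturbation, contradicting $f\in \operatorname{int}(\overline{HT})$. Hence on a $C^1$ residual $\mathcal{R}_2\subset \mathcal{CPH}_\omega^1(M)\cap \operatorname{int}(\overline{HT})$ there is a non-trivial bundle $E^{ci_0}$ of the finest dominated splitting of $E^c$ with $\dim E^{ci_0}\geq 2$. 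Now invoke the Bochi--Viana dichotomy for $C^1$ generic volume preserving diffeomorphisms: inside any bundle of the finest dominated splitting, at $\omega$-almost every point all Lyapunov exponents are equal. Combined with ergodicity from $\mathcal{R}_1$, the exponents are constant a.e., so at least two of the center exponents of $f\in \mathcal{R}_1\cap \mathcal{R}_2$ coincide. Setting $\mathcal{R}=\mathcal{R}_1\cap \mathcal{R}_2$ completes the proof.

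The main obstacle, I expect, is the second step: showing that $\operatorname{int}(\overline{HT})$ genuinely obstructs a one-dimensional finest dominated splitting on $E^c$. One must rule out the first alternative of Theorem \ref{Thm:dichotomy} by a perturbative argument producing a complex center eigenvalue at a periodic orbit and then, via the Bonatti--D{\'i}az--Pujals type arguments of \cite{BDP} adapted to the conservative category as in \cite{LL}, turning this complex eigenvalue into an actual homoclinic tangency that persists in a $C^1$ neighbourhood of the perturbed map. Once this implication "one-dimensional finest splitting on $E^c$ $\Rightarrow$ $f\notin \operatorname{int}(\overline{HT})$" is in hand, the rest of the proof is a routine combination of known generic tools.
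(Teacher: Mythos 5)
Your overall plan (ergodicity residual $\cap$ equal-exponents residual) matches the paper's, but the two halves are assembled differently from how the paper does it, and the second half as written has a gap that the paper avoids entirely.

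On ergodicity: you cite the Hertz--Hertz--Tahzibi--Ures blender machinery, but that is what the paper needs for Theorem~\ref{t.ergodicfarfromtangency}, where center bunching is not available. Here, because $f\in\mathcal{CPH}^1_\omega(M)$ is assumed center bunched, the paper takes a much shorter route: Dolgopyat--Wilkinson (Lemma~\ref{Lem:denseofstabacc}) gives a stably accessible perturbation, Avila's regularization (Lemma~\ref{Lem:denseofC2}) gives a $C^2$ one, and then Burns--Wilkinson (Lemma~\ref{Lem:ergodic}) immediately yields ergodicity; density of ergodic maps follows, and ergodicity is a $G_\delta$ condition. Your phrase ``stable ergodicity is $C^1$-dense in the center-bunched setting'' is essentially this chain, so this half is fine in substance even if the HHTU citation is misplaced.

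On equal exponents: the gap is exactly where you flag it. You propose to rule out the first alternative of Theorem~\ref{Thm:dichotomy} by producing a complex center eigenvalue and then converting it into a persistent tangency, but the paper never needs to go in that direction. It instead invokes Lemma~\ref{Lem:farfromtangency}, the generic \emph{equivalence} ``far from tangencies $\Longleftrightarrow$ finest dominated splitting of $E^c$ has all 1-dimensional bundles.'' Since $\operatorname{int}(\overline{HT})$ is open and lies in the complement of the far-from-tangency locus, a generic $f$ there cannot have an all--$1$-dimensional finest center splitting, so some $E^{ci}$ has dimension $\geq2$ --- no perturbative creation of tangencies is required. Also, your appeal to Bochi--Viana is stated too strongly: Lemma~\ref{BV} says only that the Oseledec splitting is generically either trivial or dominated along orbits, not that exponents inside a finest bundle coincide. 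The paper derives the coincidence by contradiction: if two exponents inside $E^{ci}$ differed, the Oseledec splitting would be a nontrivial dominated splitting on a full-measure set; ergodicity plus density of the Oseledec basin extends it to all of $M$, contradicting that the original splitting was finest. You should incorporate this extension argument explicitly rather than citing the statement as if it were Bochi--Viana.
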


\bigskip

\section{Proof of Theorem \ref{Thm:nonzeroexp}}
\bigskip

Given a vector space $V$ and a positive integer $p$, let $\wedge^p(V)$ be the $p-th$ exterior power of $V$. This is a vector space of dimension $C^p_d$, whose elements are called $p-vectors$. It is generated by the $p-$vectors of the form $\nu_1\wedge\cdots\wedge\nu_p$ with $\nu_j\in V$, called the {\em decomposable $p-$vectors}. A linear map $L:V\to W$ induces a linear map $\wedge^p(L):\wedge^p(V)\to\wedge^p(W)$ such that $$\wedge^p(L)(\nu_1\wedge\cdots\wedge\nu_p)=L(\nu_1)\wedge\cdots \wedge L(\nu_p).$$
If $V$ has an inner product, then we endow $\wedge^p(V)$ with the inner product such that $\|\nu_1\wedge\cdots\wedge\nu_p\|$ equals the $p-$dimensional volume of the parallelepiped spanned by $\nu_1,\cdots,\nu_p$.

More generally, there is a vector bundle $\wedge^p(\e)$, with fibers $\wedge^p(\e_x)$, associated to $\e$, and there is a vector bundle automorphism $\wedge^p(F)$, associated to $F$. If the vector bundle $\e$ is endowed with a continuous inner product, then $\wedge^p(\e)$ also is. The Oseledec data of $\wedge^p(F)$ can be obtained from that of $F$, as shown by the proposition below.

\smallskip
\begin{Prop}[Theorem 5.3.1 in \cite{AR}]\label{Prop:OseleExterior}
The Lyapunov exponents ( with multiplicity )
$\lambda_i^{\wedge^p}(x),\,\,1\leq i\leq C^p_d$, of the automorphism
$\wedge^p(F)$ at a point $x$ are the numbers
$$\lambda_{i_1}(x)+\cdots+\lambda_{i_p},\quad\mbox{ where } 1\leq
i_1<\cdots< i_p\leq d.$$ Let $\{e_1(x),\cdots,e_d(x)\}$ be a basis
of $\e_x$ such that $$e_i(x)\in E_x^\ell\quad \mbox{ for }
dimE^1_x+\cdots+dimE_x^{\ell-1}<i\leq
dimE^1_x+\cdots+dimE_x^{\ell}.$$ Then the Oseledec space
$E_x^{j,\wedge^p}$ of $\wedge^p(F)$ corresponding to the Lyapunov
exponent $\hat{\lambda}_j(x)$ is the sub-space of $\wedge^p(\e_x)$
generated by the $p-th$ vectors $$e_{i_1}\wedge\cdots\wedge e_{i_p},
\quad\mbox{ with }1\leq i_1<\cdots<i_p\leq d\quad\mbox{ and }
\lambda_{i_{1}}+\cdots+\lambda_{i_p}=\hat{\lambda}_j(x).$$
\end{Prop}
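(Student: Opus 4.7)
The plan is to use an Oseledec-adapted basis of $\e_x$ to exhibit an explicit basis of $\wedge^p(\e_x)$ and then compute the growth rate of each basis element under $\wedge^p(F)^m = \wedge^p(F^m)$, the key input being that volumes of parallelepipeds are controlled, up to a subexponential factor, by the products of the norms of their edges.

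First, I would fix the adapted basis $\{e_1(x),\ldots,e_d(x)\}$ as prescribed in the statement, so that $e_i(x)\in E_x^{\ell}$ whenever $\dim E_x^1+\cdots+\dim E_x^{\ell-1}<i\le\dim E_x^1+\cdots+\dim E_x^{\ell}$. The $\binom{d}{p}$ wedges $e_{i_1}(x)\wedge\cdots\wedge e_{i_p}(x)$, indexed by $1\le i_1<\cdots<i_p\le d$, form a basis of $\wedge^p(\e_x)$, and by the multiplicative property of exterior powers
$$\wedge^p(F^m)\bigl(e_{i_1}(x)\wedge\cdots\wedge e_{i_p}(x)\bigr)=F^m e_{i_1}(x)\wedge\cdots\wedge F^m e_{i_p}(x).$$

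Second, and this is the heart of the argument, I would establish
$$\lim_{m\to\pm\infty}\frac{1}{m}\log\bigl\|F^m e_{i_1}(x)\wedge\cdots\wedge F^m e_{i_p}(x)\bigr\|=\lambda_{i_1}(x)+\cdots+\lambda_{i_p}(x).$$
For each $k$, the Oseledec theorem applied to $F$ already gives $\frac{1}{m}\log\|F^m e_{i_k}(x)\|\to\lambda_{i_k}(x)$. The additional input needed is that the angles $\angle\bigl(E_{f^m x}^{\ell},\bigoplus_{\ell'\ne\ell}E_{f^m x}^{\ell'}\bigr)$ decay no faster than subexponentially in $|m|$, a standard tempered-distortion consequence of Oseledec regularity. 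Combining these two facts, the volume of the parallelepiped spanned by $v_k=F^m e_{i_k}(x)$ satisfies
$$\bigl\|v_1\wedge\cdots\wedge v_p\bigr\|=\prod_{k=1}^p\|v_k\|\cdot e^{o(m)},$$
and taking logarithms yields the claimed limit.

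Third, I would cluster the multi-indices $I=(i_1,\ldots,i_p)$ according to the value $\sigma(I)=\lambda_{i_1}(x)+\cdots+\lambda_{i_p}(x)$ and define $E_x^{j,\wedge^p}$ to be the span of those wedges with $\sigma(I)=\hat{\lambda}_j(x)$. Invariance under $\wedge^p(F)$ is then automatic: since $F(E_x^\ell)=E_{fx}^\ell$, each $F e_{i_k}(x)$ is a linear combination of basis vectors $e_{j}(fx)$ belonging to the same Oseledec subspace, and wedging preserves the cluster. Uniform convergence of $\frac{1}{m}\log\|\wedge^p(F^m)(\cdot)\|$ on unit vectors in $E_x^{j,\wedge^p}$ reduces, via standard multilinear estimates, to the uniform convergence on unit vectors in each $E^\ell$ that is part of Oseledec's statement for $F$ itself.

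The main technical obstacle is the lower bound in the volume estimate of the second step; the upper bound $\|v_1\wedge\cdots\wedge v_p\|\le\prod_k\|v_k\|$ is immediate, but bounding it from below by $\prod_k\|v_k\|\cdot e^{o(m)}$ requires precisely the tempered non-collapse of angles between distinct Oseledec subspaces along the orbit. Once this subexponential control is in hand, clustering the exponent sums and checking invariance are routine, and the rest of the proposition follows.
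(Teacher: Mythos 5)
The paper does not prove this proposition---it is cited verbatim as Theorem 5.3.1 of Arnold's \emph{Random Dynamical Systems}, so there is no in-paper argument to compare against. Your sketch is the standard, correct route: wedge an Oseledec-adapted basis, compute the exponential growth rate of each decomposable $p$-vector, and cluster by the value of $\lambda_{i_1}+\cdots+\lambda_{i_p}$; invariance of the clusters under $\wedge^p(F)$ is automatic since $F$ permutes the Oseledec subspaces along the orbit. That part is fine, as is the reduction of uniform convergence on each $E_x^{j,\wedge^p}$ to uniform convergence on the $E^\ell_x$.

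There is, however, a real imprecision in what you call the heart of the argument. You claim the lower bound $\bigl\|v_1\wedge\cdots\wedge v_p\bigr\|\geq \prod_k\|v_k\|\,e^{o(m)}$, $v_k=F^m e_{i_k}(x)$, ``requires precisely the tempered non-collapse of angles between distinct Oseledec subspaces along the orbit.'' That input handles the case where each $i_k$ lies in a different $E^\ell_x$, but it says nothing when the exponent $\lambda_{i_k}$ has multiplicity greater than one and several of the chosen $e_{i_k}$ land in the same Oseledec block. In that situation the vectors $F^m e_{i_{k_1}}, F^m e_{i_{k_2}}$ both sit inside $E^\ell_{f^m x}$, and the inter-subspace angle estimate gives no control over the angle between them. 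What one actually needs there is the finer regularity statement that every singular value of $F^m|_{E^\ell_x}$ grows at the same exponential rate $\lambda^\ell$ (a consequence of the two-sided, uniform version of the MET on each block), so that $\wedge^{q}\bigl(F^m|_{E^\ell_x}\bigr)$ has tempered distortion relative to $e^{q\,\lambda^\ell m}$ for every $q\le\dim E^\ell$. With that in hand, you group the indices $i_1,\ldots,i_p$ by Oseledec block, get rate $q_\ell\lambda^\ell$ for each within-block wedge, and then use the inter-block angle temperedness to add the rates. As written your proposal cites only the second half of that argument; adding the within-block singular-value control closes the gap.
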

\bigskip

A dominated splitting $E^1\oplus\cdots\oplus E^n$ is called the
finest dominated splitting if there is no dominated splitting
defined over all $M$ in each invariant bundle $E^i$, $1\leq i\leq
n$. The continuation of the finest dominated splitting may not be
the finest dominated splitting of the perturbation diffeomorphism.
However, we can perturb any partially hyperbolic diffeomorphism to
obtain a diffeomorphism with robust finest dominated splitting.

\begin{Lem}\label{Lem:finestdom}
The diffeomorphisms with robust finest dominated splitting are $C^1$
dense among  the $C^r$, $r\geq1$, partially hyperbolic
diffeomorphisms of $M$.
\end{Lem}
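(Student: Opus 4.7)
The plan is to use lower semicontinuity, in the $C^1$ topology, of the number of bundles in the finest dominated splitting, combined with the obvious upper bound $d=\dim M$ on this number. Once this is set up, a pigeonhole / maximality argument delivers a dense set of diffeomorphisms at which the finest splitting is locally constant and hence robust.

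For partially hyperbolic $f$, let $N(f)$ denote the number of bundles in its finest dominated splitting. First I would invoke the classical fact that dominated splittings are $C^1$-persistent: if $f$ admits a dominated splitting $TM=E^1\oplus\cdots\oplus E^k$, then every $g$ in a small $C^1$-neighborhood of $f$ admits a continuation $TM=E^1_g\oplus\cdots\oplus E^k_g$ with $\dim E^i_g=\dim E^i$, depending continuously on $g$ and coinciding with the original splitting when $g=f$. Applied to the finest splitting of $f$, this immediately yields $N(g)\geq N(f)$ for every $g$ in a small $C^1$-neighborhood, so $N$ is $C^1$ lower semicontinuous on the (open) set of partially hyperbolic diffeomorphisms.

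Next I would exploit the bound $N\leq d$. Given $f$ and any $C^1$-neighborhood $\mathcal{U}$ of $f$ in $\Diff^r(M)$, the integer $M:=\max\{N(g):g\in \mathcal{U}\}$ is finite and attained; pick $g_0\in\mathcal{U}$ realizing it. Lower semicontinuity of $N$ at $g_0$ together with maximality on $\mathcal{U}$ forces $N\equiv M$ on some smaller neighborhood $\mathcal{V}\subset\mathcal{U}$ of $g_0$. For every $h\in\mathcal{V}$, the continuation of $g_0$'s finest splitting is already a dominated splitting of $h$ with $N(g_0)=N(h)$ pieces, so it must be the finest dominated splitting of $h$. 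Thus $g_0$ has a robust finest dominated splitting. When $r\geq 2$, one first runs the argument in $\Diff^1(M)$ to produce $g_0\in\Diff^1(M)$, and then $C^1$-approximates $g_0$ inside $\mathcal{V}$ by a $C^r$ diffeomorphism (standard smoothing in local charts, or Zuppa's theorem in the conservative setting); since the property ``robust finest dominated splitting'' is $C^1$-open by definition, the smoothed map still qualifies.

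I do not foresee any serious technical obstacle; the only point requiring care is the claim that the continuation of $g_0$'s finest splitting really is the finest splitting of nearby $h$. This is precisely where the lower semicontinuity is essential: any further dominated refinement of some $E^i_h$ would produce $N(h)>N(g_0)$, contradicting the local constancy of $N$ on $\mathcal{V}$. Everything else reduces to invoking persistence of dominated splittings and (for $r\geq2$) the density of $C^r$ inside $C^1$.
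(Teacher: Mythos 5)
Your argument is correct and is essentially the paper's: both rely on $C^1$-persistence of dominated splittings to show the number $N(\cdot)$ of bundles in the finest dominated splitting is lower semicontinuous and bounded by $d$, and then pigeonhole to find nearby maps where $N$ is locally constant (the paper phrases this as an iterative refinement that must terminate after at most $d$ steps, you as a maximality argument — the same idea). Your explicit smoothing step for $r\geq 2$ is a detail the paper leaves implicit, but otherwise the two proofs coincide.
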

\smallskip

{\bf proof}\,\, For any neighborhood $\u$ of $f$ in
$\mathcal{PH}^1_\omega(M)$, we denote $\u_f\subset\u$ be the
neighborhood of $f$ in $\mathcal{PH}^1_\omega(M)$ such that the
continuation of the dominated splitting of $f$ is a dominated
splitting of the perturbation diffeomorphism. Assume that there is a
small perturbation $h_1\in\u_f$ such that the continuation of the
finest dominated splitting of $f$ is not the finest one of $h_1$.
Then the finest dominated splitting of $h_1$ must be a refinement of
the continuation. Consider the subset $\u_{h_1}\cap \u_f$. If there
exists another perturbation $h_2$ in $\u_{h_1}\cap \u_f$ whose
finest dominated splitting is a refinement of the continuation of
the finest dominated splitting of $h_1$, we shift our attention to a
smaller subset $\u_{h_2}\cap\u_{h_1}\cap \u_f$. The process will
stop at some perturbation $h_n$ since the number of invariant
bundles in the finest dominated splitting of $h_i$ strictly
increases ( as $i$ increases ) and this number can not exceed $d$,
the dimension of $M$. Let $g=h_n$. Then $g$ must has robust finest
dominated splitting.

\qed

\bigskip

The main techniques in the proof of Theorem \ref{Thm:nonzeroexp} are
three results as we cite in the following:
\smallskip

A partially hyperbolic diffeomorphism $f$ is accessible if, for
every pair of points $p,\,\, q\in M$, there is a $C^1$ path from $p$
to $q$ whose tangent vector always lies in $E^u\cup E^s$ and
vanishes at most finitely many times. We say $f$ is stably
accessible if every g sufficiently $C^1-$close to $f$ is accessible.
\smallskip

\begin{Lem}[Main Theorem in \cite{DW}]\label{Lem:denseofstabacc}
For any $r\geq1$, stable accessibility is $C^1$ dense among the
$C^r$, partially hyperbolic diffeomorphisms of $M$, volume
preserving or not. 
\end{Lem}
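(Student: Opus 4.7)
The plan is to prove density of stable accessibility among $C^r$ partially hyperbolic diffeomorphisms via a localized perturbation lemma followed by a finite-covering argument. Let $f$ be a $C^r$ partially hyperbolic diffeomorphism with invariant splitting $TM = E^u \oplus E^c \oplus E^s$. For $x \in M$, write $AC(x)$ for its accessibility class, that is, the set of points reachable from $x$ by a piecewise $C^1$ path tangent to $E^u \cup E^s$ with at most finitely many breaks. Accessibility of $f$ is equivalent to $AC(x) = M$ for some (equivalently, every) $x$; since $M$ is connected and the $AC(x)$'s partition $M$, this is in turn equivalent to every accessibility class being open.

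First I would prove a local perturbation lemma: given $x_0 \in M$ and $\varepsilon > 0$, one can find a $C^1$ $\varepsilon$-perturbation $g$ of $f$ (volume-preserving if $f$ is so), supported in an arbitrarily small neighborhood of $x_0$, still partially hyperbolic, such that $AC_g(x_0)$ contains an open neighborhood of $x_0$. The construction picks a small flow box adapted to the splitting and inserts a shear that twists $W^u$ relative to $W^s$, so that traversing a small closed $su$-rectangle produces a nontrivial displacement in the center direction; executing one more $su$-cycle then sweeps out a full neighborhood (the ``accessibility engine''). In the volume-preserving case, one realizes the shear as composition with the time-$\tau$ map of a compactly supported divergence-free vector field, which is automatically $\omega$-preserving and can be kept $C^1$-small.

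Second, I would deduce density globally. Let $\mathcal{O}$ be the set of partially hyperbolic diffeomorphisms for which every accessibility class has nonempty interior; continuity of the strong foliations in the $C^0$ topology, together with the openness-preserving nature of $su$-reachability, shows that $\mathcal{O}$ is $C^1$-open. Cover $M$ by finitely many small balls and apply the local lemma in turn at representative points $x_1, \dots, x_N$, each with budget $\varepsilon/N$, using openness of the previously gained properties to ensure they survive the next perturbation. This produces a $g$ within $\varepsilon$ of $f$ for which $AC_g(x_i)$ is open for every $i$; since these classes cover $M$ and partition it, connectedness forces a single class, so $g$ is accessible. Openness of $\mathcal{O}$ then upgrades this to stable accessibility.

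The main obstacle is quantitative control in the engine step: producing a nontrivial center displacement of at least a fixed order while keeping the $C^1$-size of the perturbation below $\varepsilon$. The derivative of the shear is constrained by $\varepsilon$, so a single local twist cannot directly yield an order-one center holonomy. The way out is to exploit the exponential stretching along $E^u$ and $E^s$: one chooses a small perturbation inside a narrow flow box but measures the resulting $su$-holonomy around a long, thin rectangle stretched by forward and backward iterates of $f$, so that a pointwise-small shear accumulates into a macroscopic center displacement. Carrying this out while simultaneously respecting partial hyperbolicity and the volume form is the technical heart of the argument.
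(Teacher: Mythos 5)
This lemma is not proved in the paper at all: it is a direct citation of the Main Theorem of Dolgopyat--Wilkinson \cite{DW}, quoted as a black box. So there is no ``paper's own proof'' to compare against; one can only compare your sketch with the actual argument in \cite{DW}.

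Your outline --- localized perturbations, an ``accessibility engine'' built by shearing $W^u$ against $W^s$, exploiting the hyperbolic stretching to turn a $C^1$-small shear into a macroscopic center holonomy along a long thin $su$-rectangle, and a finite covering of $M$ --- is close in spirit to what Dolgopyat--Wilkinson actually do. However, there is a genuine gap at the place where you pass from ``accessible'' to ``stably accessible.'' You claim that the set $\mathcal{O}$ of partially hyperbolic diffeomorphisms for which every accessibility class has nonempty interior is $C^1$-open, citing $C^0$-continuity of the strong foliations. If $M$ is connected, $\mathcal{O}$ is exactly the set of accessible diffeomorphisms, so your claim amounts to asserting that accessibility is $C^1$-open --- which would make accessibility identical to stable accessibility. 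That is not known in general, and it is precisely the point that Dolgopyat--Wilkinson have to work around. $C^0$-continuity of the $su$-laminations in the diffeomorphism lets you track a single $su$-path (and hence a single endpoint) under perturbation, but it does not control the \emph{derivative} of the $su$-holonomy maps, and ``$AC(x)$ contains an open set'' is a submersion-type condition on those holonomies, not a pointwise one. So openness of $\mathcal{O}$ does not follow from what you invoke.

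What Dolgopyat--Wilkinson do instead is build robustness into the engine itself: they establish quantitative (Lipschitz/H\"older) estimates on the $su$-holonomies with constants controlled by $C^1$ data, and construct a specific $4$-legged $su$-loop whose return map sweeps out a definite center-direction ball with margin, so that the sweep survives all further $C^1$-small perturbations. The openness you need is then a property of the particular perturbed diffeomorphism, certified by these explicit estimates, rather than a general topological fact about the set of accessible systems. Your closing paragraph (the ``technical heart'') gestures at exactly this issue, but the second step of your plan treats it as already resolved by asserting openness of $\mathcal{O}$, which begs the question. To repair the argument you would need to replace that assertion with a uniform lower bound on the center sweep of the engine, valid on a $C^1$-neighborhood, and then run the finite covering argument with that quantitative robustness in hand --- which is essentially the content of \cite{DW}.
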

\smallskip

\begin{Lem}[Thereom 2 in \cite{BB}]\label{Lem:BB}
Let $M$ be a compact manifold and $\omega$ be a smooth volume form.
Let $f$ be an $\omega-$preserving $C^1-$diffeomorphism of $M$,
admitting a dominated splitting $TM=E^1\oplus\cdots\oplus
E^k,\,k>1.$

Then there are $\omega-$preserving diffeomorphisms $g$, arbitrarily
$C^1-$close to $f$, for which the integral $\int_M log|Det
Dg|_{E^i}(x)|d\omega(x)$ is different from $0$ for each
$i\in\{1,...,k\}$.
\end{Lem}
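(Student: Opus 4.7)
The plan is to prove openness of the map $g \mapsto (I_1(g),\dots,I_k(g))$ where $I_i(g):=\int_M \log|\det Dg|_{E^i_g}|\,d\omega$, viewed as a map from a small $C^1$-neighborhood $\u$ of $f$ into the hyperplane $H=\{(x_1,\dots,x_k)\in\mathbb{R}^k: \sum x_i=0\}$. First, by robustness of dominated splittings, the continuations $E^i_g$ exist and depend continuously on $g\in\u$, so each $I_i$ is continuous. Second, since $g$ preserves $\omega$, Liouville gives $|\det Dg|\equiv 1$, hence $\sum_i I_i(g)=0$ identically on $\u$, i.e.\ the image lies in $H$. Third, the "bad" set $\bigcup_i\{g\in\u: I_i(g)=0\}$ corresponds to the coordinate hyperplanes inside $H$, which is meager in $H$. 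Therefore, once openness of $I=(I_1,\dots,I_k):\u\to H$ at $f$ is established, arbitrarily small perturbations $g$ of $f$ will satisfy $I_i(g)\neq 0$ for every $i$, as required.

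To prove openness at $f$, I would show that for every pair $1\le i<j\le k$ one can construct a one-parameter family $g_t\in\Diff^1_\omega(M)$, $C^1$-close to $f=g_0$, satisfying
\begin{equation*}
\tfrac{d}{dt}\big|_{t=0} I_i(g_t)>0,\qquad \tfrac{d}{dt}\big|_{t=0} I_j(g_t)=-\tfrac{d}{dt}\big|_{t=0} I_i(g_t),\qquad \tfrac{d}{dt}\big|_{t=0} I_\ell(g_t)=0\text{ for }\ell\neq i,j.
\end{equation*}
Since the vectors $e_i-e_j$ span $H$, such families exhibit a surjective derivative onto $H$, from which openness at $f$ follows by a standard implicit-function / open-mapping argument (one only needs the derivative to be surjective onto $H$, not a local diffeomorphism). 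The construction of $g_t$ is the heart of the proof: choose a point $x_0\in M$ whose forward orbit is Birkhoff-regular with respect to the relevant log-Jacobian cocycles, pick a small chart around $x_0$ in which a piece of $E^i\oplus E^j$ is approximately straightened, and let $h_t$ be a volume-preserving diffeomorphism, supported in a small ball $B$ around $x_0$, which inside $B$ is a rotation of angle $t\phi(x)$ in the $E^i\oplus E^j$ plane for a bump $\phi$. Then $g_t=h_t\circ f$ is conservative, $C^1$-close to $f$, and coincides with $f$ outside $B$; crucially, the construction is localized in the $(i,j)$-plane so it leaves the other bundles undisturbed to first order.

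The technical crux, and the main obstacle, is the first-order computation of $I_i(g_t)$. The subbundle $E^i_{g_t}$ differs from $E^i$ both inside and (by non-local propagation through the dynamics) outside $B$, so one must show that this perturbation of the bundle contributes only at second order in $t$, and that the leading first-order contribution to $\log|\det Dg_t|_{E^i_{g_t}}|$ comes solely from the effect of the rotation $Dh_t$ on the restriction of $Df$ to $E^i$. Using domination — which gives exponentially contracting sensitivity of $E^i_{g_t}$ to perturbations of the cocycle, and an analogous decay for $E^j_{g_t}$ — one expands $\log|\det(Dh_t\cdot Df)|_{E^i_{g_t}}|=\log|\det Df|_{E^i}| + t\,\alpha(x)+O(t^2)$, where $\alpha$ is an explicit expression in the angles and norms of $Df$ on $E^i,E^j$. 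The sign of $\int_B \alpha\,d\omega$ is controlled by the choice of $\phi$ and the orientation of the rotation, making it strictly positive for suitable $h_t$. The symmetry between the $E^i$ and $E^j$ roles inside the rotation, together with $\sum_\ell I_\ell\equiv 0$, then forces the derivative along $E^j$ to be the opposite of the derivative along $E^i$, while bundles $E^\ell$ with $\ell\neq i,j$ are untouched at first order because $h_t$ acts trivially on them in the chart.

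Once this derivative calculation is in hand for each pair $(i,j)$, the image of $DI$ at $f$ spans all of $H$; hence by the open mapping principle (in the $C^1$-topology on $\Diff^1_\omega(M)$) the map $I$ is open at $f$, and we may find $g\in\u$ arbitrarily $C^1$-close to $f$ with $I(g)$ lying in the open dense subset of $H$ whose coordinates are all nonzero. This proves the lemma. I expect the hardest technical step to be establishing the $O(t^2)$ control on the perturbation of the invariant bundles $E^i_{g_t}$ with constants uniform in $x$, which is where dominated splitting is essential.
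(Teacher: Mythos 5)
The paper itself does not prove this lemma; it is quoted verbatim from Theorem~2 of Baraviera--Bonatti \cite{BB}, so there is no in-paper proof for me to compare against. I therefore assess your argument on its own merits against the Baraviera--Bonatti construction.

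Your high-level reduction is sound: the image of $I=(I_1,\dots,I_k)$ lies in the hyperplane $H=\{\sum x_i=0\}$, each coordinate hyperplane $\{x_i=0\}$ is nowhere dense in $H$, and openness of $I$ at $f$ would finish. The proposed route to openness --- produce one-parameter families $g_t$ through $f$ whose first-order derivatives span $H$ --- is also the right shape of argument, \emph{if} such families exist.

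The genuine gap is in the derivative computation: a localized rotation in the $E^i\oplus E^j$ plane has \emph{zero} first-order effect on every $I_\ell$, so these families do not supply a surjective derivative onto $H$. To see this, first replace the metric by one adapted to $f$ so that $E^1\perp\cdots\perp E^k$ (possible since the bundles are continuous; and $I_\ell$ is metric-independent because $\omega$ is $g$-invariant). Writing $g_t=h_t\circ f$, $A_0=Df$, $\dot H=\frac{d}{dt}\big|_{t=0}Dh_t$, and parametrizing $E^\ell_{g_t}=(\mathrm{Id}+tS)E^\ell_f$ with $S\colon E^\ell\to (E^\ell)^\perp$, the term $A_0 S$ lands inside the $Df$-invariant complementary bundle, so it vanishes under the orthogonal projection onto $E^\ell$. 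One gets the closed formula
$$I_\ell'(0)=\int_M \mathrm{tr}\bigl(P^\perp_{E^\ell}\,\dot H\big|_{E^\ell}\bigr)\,d\omega.$$
For $\dot H(y)=\varphi(y)\,J+(Jy^{ij})\otimes d\varphi(y)$ with $J$ the infinitesimal rotation in the $E^i\oplus E^j$ plane: the $\varphi J$ piece contributes $0$ because $J|_{E^i}$ has image in $E^j\perp E^i$; and the $d\varphi$ piece integrates to $0$ by integration by parts (for $i\neq j$, $\partial_i(y^j)=0$, and $\varphi$ vanishes on $\partial B$). Hence $I_\ell'(0)=0$ for every $\ell$. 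This also shows your stated reason for the $O(t^2)$ control on the bundle-movement term --- domination's exponential decay of bundle sensitivity --- is not the actual mechanism; the cancellation is the algebraic identity $P_{E^\ell}A_0 S=0$ coming from $Df$-invariance of the complement, once the metric is adapted.

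What Baraviera--Bonatti actually exploit is a \emph{second-order} effect: rotating $E^i$ toward $E^j$ by a fixed small angle $\theta$ and letting domination ``untilt'' the bundle along forward iterates accumulates a change of order $c\,\theta^2$ in the integrated Jacobian, with $c\neq 0$ of definite sign. This still lets one move the value of the integral and prove the Lemma, but it does not give a surjective first-order derivative, so the openness claim cannot be established this way. To repair your proposal you must either (a) carry out the quadratic analysis and then replace ``$DI$ surjective onto $H$'' by an argument that each set $\{I_\ell=0\}$ has empty interior (the finite union of nowhere dense sets argument still closes), or (b) use a perturbation with nonzero first-order effect, e.g.\ a traceless stretch in the $E^i,E^j$ directions, but then you have to correct it to lie in $\Diff^1_\omega(M)$ and verify the correction does not annihilate the first-order term.
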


\smallskip

Let $\mathcal{D}_p(f,m)$ be the set of points $x$ such that there is
an $m-$dominated splitting of index $p$ ( i.e., $p=dim E^s$ ) along
the orbit of $x$. Then $\mathcal{D}_p(f,m)$ is a closed set. Define
$$\Gamma_p(f,m)=M\backslash \mathcal{D}_p(f,m)$$
and
$$\Gamma_p(f,\infty)=\cap_{m\in\mathbb{N}}\Gamma_p(f,m).$$

We recall from Proposition \ref{Prop:OseleExterior} that for any
diffeomorphism $f\in \Diff^1_\omega(M)$ and $p\in\{1,...,d-1\}$, the
integrated Lyapunov exponent of $\wedge^p(Df)$ coincides with the sum
of the largest $p$ integrated Lyapunov exponents, that is,
$$\int_M\wedge^p(Df)d\omega=\int_{M}\wedge_p(f,x)d\omega(x),$$
where $\wedge_p(f,x)$ denotes the sum, i.e., $\wedge_p(f,x)=\lambda_1(f,x)+\cdots+\lambda_p(f,x)$.

\smallskip
\begin{Lem}[Proposition 4.17 in \cite{BV}]\label{Lem:glob}
Let $f\in \Diff^1_\omega(M)$ and $p\in\{1,...,d-1\}$, and given any
$\varepsilon_0>0$ and $\delta>0$, there exist a neighborhood
$\mathcal{U}(f,\varepsilon_0)\subset \Diff^1_\omega(M)$ of $f$ with
radius $\varepsilon_0$ and a diffeomorphism
$g\in\mathcal{U}(f,\varepsilon_0)$ such that
$$\int_{M}\wedge_p(g,x)d\omega(x)<\int_{M}\wedge_{p}(f,x)d\omega(x)-J_p(f)+\delta.$$
where $J_p(f)=\int_{\Gamma_p(f,\infty)}\frac{\lambda_p(f,x)-\lambda_{p+1}(f,x)}{2}d\omega(x).$
\end{Lem}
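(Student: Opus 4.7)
My plan is to adapt the Bochi--Viana global-to-local perturbation scheme. Using Proposition \ref{Prop:OseleExterior} together with the subadditive ergodic theorem I first reduce the statement to finding a single $g\in\mathcal{U}(f,\varepsilon_0)$ for which
$$\frac{1}{N}\int_M \log\|\wedge^p Dg^N(x)\|\,d\omega(x)<\int_M\wedge_p(f,x)\,d\omega(x)-J_p(f)+\delta$$
for some sufficiently large $N$. The dichotomy underlying the argument is that on the closed set $\mathcal{D}_p(f,m)$ the $m$-dominated splitting of index $p$ forces $\log\|\wedge^p Dg^m\|\approx\log\|\wedge^p Df^m\|$ for every $C^1$-small $g$, whereas on $\Gamma_p(f,m)$ the top-$p$ and bottom-$(d-p)$ singular subspaces of $Df^m_x$ become arbitrarily close at some intermediate iterate and are therefore available for rotation.

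The core step is a Ma\~n\'e--Bochi local perturbation: for $x\in\Gamma_p(f,m)$ and suitable $k\le m$, I construct a volume-preserving $C^1$-small twist supported in a Darboux chart around $f^k(x)$ that rotates the $p$-th singular direction of $Df^m_x$ toward the $(p+1)$-th one, thereby averaging those two exponents and reducing $\log\|\wedge^p Dg^m(x)\|$ by approximately $(\lambda_p(f,x)-\lambda_{p+1}(f,x))/2$. Globalisation proceeds via a Kakutani--Rokhlin tower of height $m$ with small-diameter base: on the part of the base lying in $\Gamma_p(f,m)$ the local twists have pairwise disjoint space-time supports, so they assemble into a single $g\in\mathcal{U}(f,\varepsilon_0)\cap\Diff^1_\omega(M)$. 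Summing the pointwise gains and letting $m\to\infty$, so that $\Gamma_p(f,m)\downarrow\Gamma_p(f,\infty)$, yields the announced inequality, the error $\delta$ absorbing the tower's exceptional set and a Lusin-type smoothing of the Oseledec flag.

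The main obstacle is making the local twist quantitatively effective: it must be $C^1$-small of order $\varepsilon_0$ and yet produce an order-one drop in $\log\|\wedge^p Dg^m\|$. This requires converting the qualitative failure of $m$-domination into a usable upper bound on the angle between the two relevant singular subspaces and then calibrating the chart size against $m$ and $\varepsilon_0$. The volume-preserving constraint compounds the difficulty: any rotation in a chosen $2$-plane must be compensated by a matching shear in the transverse directions, and one must verify that this compensation remains $C^1$-small and does not leak outside the tower level where it is supported.
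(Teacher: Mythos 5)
The paper offers no proof of this lemma; it is imported verbatim as Proposition~4.17 of Bochi--Viana \cite{BV}, so there is no internal argument to compare against. Measured against the actual Bochi--Viana proof, your sketch does capture the correct architecture: the reduction via $\int_M\wedge_p(g,\cdot)\,d\omega=\inf_N\frac1N\int_M\log\|\wedge^pDg^N\|\,d\omega$, the dichotomy between $\mathcal{D}_p(f,m)$ and $\Gamma_p(f,m)$, the Ma\~n\'e--Bochi local rotation that exchanges the $p$-th and $(p{+}1)$-th directions where $m$-domination fails, the Rokhlin-tower globalisation, and the limit $m\to\infty$ giving $\Gamma_p(f,\infty)$. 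You also correctly identify the real difficulty, namely producing a $C^1$-small, volume-preserving perturbation that nevertheless yields an $O(1)$ drop in $\log\|\wedge^pDg^m\|$.

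Two points should be tightened. First, the claim that on $\mathcal{D}_p(f,m)$ domination ``forces $\log\|\wedge^pDg^m\|\approx\log\|\wedge^pDf^m\|$ for every $C^1$-small $g$'' is not what Bochi--Viana use and is not quite true as stated; what matters is only that the perturbation is supported (via the tower) over $\Gamma_p$, so the required gain $\int_{\Gamma_p(f,\infty)}\frac{\lambda_p-\lambda_{p+1}}2\,d\omega$ is collected there, while over $\mathcal{D}_p$ one simply does not touch $f$. No rigidity assertion on $\mathcal{D}_p$ is needed or proved. Second, the heart of the cited proof --- the quantitative directional-exchange lemma (Proposition~7.1 in \cite{BV}) converting failure of $m$-domination along an orbit segment into a calibrated rotation, together with the ``realizable sequence'' bookkeeping that turns a linear tower perturbation into an honest $g\in\Diff^1_\omega(M)$ inside $\mathcal{U}(f,\varepsilon_0)$ --- is only gestured at in your last paragraph. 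As a summary of the Bochi--Viana strategy your proposal is essentially right; as a self-contained replacement for the citation it leaves that local lemma and its realization step open, and those are where virtually all of the work lies.
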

\bigskip


{\bf Proof of Theorem \ref{Thm:nonzeroexp}}\,\,

By Lemma\ref{Lem:denseofstabacc}, we can perturb $f$ to  a
diffeomorphism  $f_1\in\mathcal{PH}^1_\omega(M)$ with stable
accessibility. Take  a neighborhood $\u_1\subset
\mathcal{PH}^1_\omega(M)$ of $f_1$ such that each diffeomorphism  in
$\u_1$ is accessible. Applying Lemma\ref{Lem:finestdom}, we get
another small perturbation $f_2\in\u_1$ which is accessible and has
the robust finest dominated splitting: $$TM=E^s\oplus
E^{c1}\oplus\cdots\oplus E^{ck}\oplus E^u,\,\, 1\leq k\leq
d=dimM\eqno(2.1)$$ (Since any integrated Lyapunov exponent related
to $E^s$ and $E^u$ is robustly away from  zero, we need not care
about the decomposition of $E^s$ and $E^u$). Therefore, there is a
neighborhood $\u_2\subset\u_1$ of $f_2$ such that  any $g\in\u_2$,
$g$ is accessible and has the same finest dominated splitting as
$f_2$. By perturbing $f_2$ if necessary, Lemma\ref{Lem:BB} ensures
that there exists a neighborhood $\u_3\subset\u_2$ with property:
$$\int_M log|DetDg|_{E^{ci}}(x)|d\omega(x)\neq0,\quad1\leq i\leq k,\,\forall g\in\u_3.\eqno(2.2)$$
That is, the sum of the integrated Lyapunov exponents
$$\Sigma^{s+c_i}_{j=s+c_{i-1}+1}\int_M \lambda_j(g,x)d\omega(x)\neq0,
\quad 1\leq i\leq k$$ where we denote $s=dimE^s$ and $u=dimE^u$ and
$c_i=dim E^{ci}$ and $c_0=0$ for simplicity.

Notice that $$\int_M
\Lambda_p(g,x)d\omega(x)=\inf\limits_{n}\frac{1}{n}\int_M
log\|\Lambda^p(Dg^n)\|d\omega(x).$$ The functions
$g\in\Diff^1_\omega(M)\mapsto \int_M \Lambda_p(g,x)d\omega(x)$ are
upper semi-continuous for any $1\leq p\leq d.$ Hence the continuity
points of the map $$g\in\Diff^1_\omega(M)\mapsto (\int_M
\Lambda_1(g,x) d\omega(x),\cdots,\int_M
\Lambda_d(g,x)d\omega(x))\eqno(2.3)$$ form a residual subset. We
choose a continuity point $h$ of the above map in $\u_3$. Now we
verify that $h$ meets the requirements of our theorem.

Since $h\in\u_3\subset \u_1$, $h$ is accessible. This implies that
for $\omega-$almost every $x\in M$, the orbit of $x$ is dense in
$M$. In fact, Burns-Dolgopyat-Pesin\cite{BDP} pointed out in the proof of Theorem 2 that the essential accessibility property indicates that almost every point has a dense orbit. Note that the essential accessibility property is a weaker property than accessibility. Precisely, recall that accessibility
is an equivalence relation. If a diffeomorphism is
accessible then the partition into accessibility classes is trivial. A diffeomorphism is said to be essentially
accessible if the partition into accessibility classes is ergodic
(i.e. a measurable union of equivalence classes must have zero or full
measure). From the definition, one can deduce that an accessible diffeomorphism is also essentially accessible. Hence $h$ is essentially accessible, since $h$ is accessible. And then almost every point has a dense orbit.

Then for any $s+1\leq p\leq d-u$, we have either
$D_p(h,m)=M\,mod0$ for some integer $m>0$, or
$\Gamma_p(h,\infty)=M\,mod0$. In fact, if one has that $\omega (D_p(h,m))>0$ for some integer $m>0$, by the accessibility, for $\omega-$almost every point in $D_p(h,m)$, we can spread the dominated splitting along its orbit to the closure, that is, the whole manifold $M$. Since $h$ has the same finest
dominated splitting as $f_2$, we have that
$$\Gamma_p(h,\infty)=M\,mod0,\quad s+c_{i-1}+1\leq p\leq s+c_i,\,\forall 1\leq i\leq k$$ and $$D_p(h,m)=
M\,mod0,\quad p=c_i,\,\forall 1\leq i\leq k.$$

Note that $h$ is a continuity point of map (2.3). Combining with
Lemma\ref{Lem:glob}, one can obtain that $J_p(h)=0$, i.e.,
$$\lambda_p(h,x)=\lambda_{p+1}(h,x) \quad \forall s+c_{i-1}+1\leq
p\leq s+c_i,\,\forall 1\leq i\leq k.$$ Therefore, one can deduce from
(1.3) that
$$\int_M \lambda_p(h,x)d\omega(x)=\frac{1}{c_i}\int_M
log|DetDh|_{E^{ci}}(x)|d\omega(x)\neq0,\quad \forall s+c_{i-1}+1\leq
p\leq s+c_i,\,\forall 1\leq i\leq k. $$

\qed

\bigskip

{\bf{Remark}}\, When we have done this paper, we find \cite{HHTU}
which is considering the same problem in Theorem
\ref{Thm:nonzeroexp} of the case $dim E^c=2$.
\bigskip

\section{The example and Proofs of Theorems \ref{Thm:dichotomy}-\ref{Thm:interiorofHT}}
\bigskip
In this section we will first present an example of a linear Anosov
diffeomorphism $A$ on $\mathbb{T}^3$ which has a couple of complex
eigenvalues. Then we show that for any small neighborhood of $A$ in $\Diff^1_{\omega}(\mathbb{T}^3)$, there exists a residual subset consisting of diffeomorphisms with non-simple
spectrum. Then we will give the proof of Theorems \ref{Thm:dichotomy}, \ref{t.ergodicfarfromtangency} and \ref{Thm:interiorofHT}.
\bigskip

\subsection{The example}
\bigskip
{\bf Example:}
Let
$$
A=\left(\begin{array}{ccc}
0 &1 &0\\
0 &0 &1\\
1 &0 &1
\end{array}\right):\,\mathbb{T}^3\to\mathbb{T}^3,
$$
Then A is stable ergodic and
$$
\det(A-\lambda Id)=\left|\begin{array}{ccc}
-\lambda &1 &0\\
0 &-\lambda &1\\
1 &0 &1-\lambda
\end{array}\right|=\lambda^2(1-\lambda)+1=-\lambda^3+\lambda^2+1=f(\lambda).
$$
\smallskip
Since $f(1)=1>0$ and $f(2)=-3<0$, by the continuity of $f$, there is
$ c\in(1,\,2),$ such that $f(c)=0.$

On the other hand, one consider the converse matrix
$$
A^{-1}=\left(\begin{array}{ccc}
0 &-1 &1\\
1 &0 &0\\
0 &1 &0
\end{array}\right)\,\mbox{ and the determinant }\, \det(A^{-1}-\lambda
Id)=-\lambda^3-\lambda+1=g(\lambda).
$$
We have that $g(0)=1>0$ and $g(1)=-1<0$. This implies that $\exists
d\in(0,\,1),$ such that $g(d)=0.$ Note that
$g'(\lambda)=-3\lambda^2-1<0$.
Therefore, the point $d$ is the unique real  root of $g(\lambda)$
and hence the other two eigenvalues are not  real. Moreover,
observing that the coefficients of the first and last items are both
1, we can deduce that $d$ must be an irrational number.

Note that when a number $\lambda$ is an eigenvalue of $A$, its
reciprocal $\frac 1 \lambda$ is an eigenvalue of $A^{-1}$ and
conversely, it also holds. Thus we have the following conclusion
about $A$.

\smallskip

{\bf Conclusion I.\,}\, $\exists p\in\mathbb{T}^3$ (hyperbolic fixed
point) has complex eigenvalues $\lambda_1$, $\overline{\lambda_1}$
and a real (irrational) eigenvalue $\lambda_2$ satisfying
$$|\lambda_1|<1<|\lambda_2|.$$
\bigskip

This conclusion implies that $A$ has non-simple spectrum. Moreover, applying the following lemma, we will show that for any small neighborhood of $A$ in $\Diff^1_\omega(\mathbb{T}^3)$, there is a residual subset composed of diffeomorphisms with non-simple spectrum.

\smallskip

\begin{Lem}[Theorem 1 in \cite{BV}]\label{BV}
There exists a residual set $\r\subset \Diff^1_{\omega}(M)$ such that, for each $f\in\r$ and $\omega-$almost every $x\in M$, the Oseledec splitting of $f$ is either trivial or dominated at $x$.
\end{Lem}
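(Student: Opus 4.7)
The plan is to reduce the statement to Lemma~\ref{Lem:glob} via a standard Baire-category argument based on upper semi-continuity of the integrated exterior-power Lyapunov exponents. For each $p\in\{1,\ldots,d-1\}$ the map $\Phi_p\colon f\mapsto\int_M\wedge_p(f,x)\,d\omega(x)$ is upper semi-continuous on $\Diff^1_\omega(M)$, as already observed in the proof of Theorem~\ref{Thm:nonzeroexp}. Hence each $\Phi_p$ is continuous on a residual subset $\r_p\subset\Diff^1_\omega(M)$, and I would take $\r:=\bigcap_{p=1}^{d-1}\r_p$, which is still residual.

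The key claim is that $J_p(f)=0$ for every $f\in\r$ and every $p$. Given $\delta>0$, Lemma~\ref{Lem:glob} produces a diffeomorphism $g$ arbitrarily $C^1$-close to $f$ with $\Phi_p(g)<\Phi_p(f)-J_p(f)+\delta$. Continuity of $\Phi_p$ at $f$ rules out such a strict drop, so $J_p(f)\le\delta$; letting $\delta\to 0$ yields $J_p(f)=0$. By the definition
$$J_p(f)=\int_{\Gamma_p(f,\infty)}\frac{\lambda_p(f,x)-\lambda_{p+1}(f,x)}{2}\,d\omega(x),$$
this forces $\lambda_p(f,x)=\lambda_{p+1}(f,x)$ for $\omega$-a.e.\ $x\in\Gamma_p(f,\infty)$. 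Equivalently, at any point $x$ where the $p$-th and $(p+1)$-th Lyapunov exponents actually differ, the orbit of $x$ must lie in $\mathcal{D}_p(f,m)$ for some finite $m$, so an $m$-dominated splitting of index $p$ is defined along that orbit.

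To conclude, let $X$ be the full-measure set of Oseledec-regular points where the preceding paragraph applies for every $p$. If $x\in X$ has $s(f,x)=1$, its Oseledec splitting is trivial. Otherwise, at each index $p$ where consecutive Lyapunov exponents drop, the orbit of $x$ lives inside $\mathcal{D}_p(f,m_p)$ for some $m_p$ and therefore carries an $m_p$-dominated splitting of index $p$; intersecting these splittings over the finitely many such $p$ recovers exactly the Oseledec decomposition at $x$ and exhibits it as a dominated splitting along the orbit. This is the claimed dichotomy. The only genuinely hard ingredient is Lemma~\ref{Lem:glob} itself, which is Bochi--Viana's global version of Ma\~n\'e's perturbation technique: on $\Gamma_p(f,\infty)$ one orchestrates many small volume-preserving rotations of the derivative cocycle along long orbit segments to realign the most expanding $p$-plane with the least expanding complementary direction, thereby collapsing the gap $\lambda_p-\lambda_{p+1}$. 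Since that lemma is available as a black box, the main obstacle in an independent write-up would be implementing those perturbations in the volume-preserving \emph{diffeomorphism} category rather than the purely linear cocycle setting; everything else is the clean Baire / upper semi-continuity package above.
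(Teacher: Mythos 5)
The paper does not prove this statement: it is quoted verbatim as Theorem~1 of Bochi--Viana \cite{BV} and used as a black box, so there is no in-paper proof for your attempt to be compared against. That said, your argument is correct and is essentially the actual Bochi--Viana argument: upper semi-continuity of the integrated exterior exponents $\Phi_p(f)=\int_M\wedge_p(f,x)\,d\omega(x)$, the Baire residual set $\r=\bigcap_p\r_p$ of simultaneous continuity points, the deduction $J_p(f)=0$ on $\r$ from Proposition~4.17 of \cite{BV} (Lemma~\ref{Lem:glob} here), and then the pointwise dichotomy for $\omega$-a.e.\ $x$: at each index $p$ where $\lambda_p(f,x)>\lambda_{p+1}(f,x)$, a.e.\ such $x$ lies in $\mathcal{D}_p(f,m_p)$ for some $m_p$, so the orbit carries an $m_p$-dominated splitting of index $p$.

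One step you gloss over: why does refining these index-$p$ dominated splittings along the orbit of $x$ ``recover exactly the Oseledec decomposition''? This needs the standard observation that at an Oseledec-regular point an $m$-dominated splitting of index $p$ forces the dominating bundle to coincide with the sum of the Oseledec subspaces of the $p$ largest exponents (domination implies a strict gap in Lyapunov exponents across the splitting, so it cannot mix distinct exponents). Once that is in place, the family $\{E_p\}$ of dominating subbundles at the jump indices is a nested flag, the successive quotients (or, equivalently, the intersections $E_{p_{i+1}}\cap F_{p_i}$) reproduce the Oseledec subbundles, and taking the maximum of the finitely many $m_p$'s gives a single dominated splitting constant along the orbit. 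With that sentence filled in, your write-up is a faithful sketch of the cited theorem.
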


\bigskip

We choose a neighborhood $\u\subset \Diff^1_{\omega}(\mathbb{T}^3)$ of
$A$, such that $\forall g\in\u$, $g$ is ergodic and has a fixed
point (i.e., the continuation of $p$ ) with complex eigenvalues. Using Lemma\ref{BV}, there
is a residual set $\cal{R}\subset\u$, such that
$\forall\,g\in\cal{R}$, the Oseledec decomposition of $g$ is
dominated or trivial. Therefore, $\forall\,g\in\cal{R}$, if $g$ has
simple spectrum, its dominated splitting must be the form
$$E_1(x)\bigoplus E_2(x)\bigoplus E_3(x)$$ for every generic point
$x\in O(\omega)$, the Oseledec basin of $\omega$, since $g$ is ergodic.
Note that $\omega$ is a volume form and the Oseledec basin has
$\omega-$full measure, the Oseledec basin is dense in $M$. Thus the
dominated splitting $E_1(x)\bigoplus E_2(x)\bigoplus E_3(x)$ of $g$
can be extended to the whole manifold $M$ by the continuity of
dominated splitting. This contradicts with the property that $g$ has
complex eigenvalues. Thus we have the second conclusion about the local genericity of diffeomorphisms with non-simple spectrum.

\smallskip

{\bf Conclusion II.\,}\,  For any small neighborhood $\v\subset \Diff^1_{\omega}(\mathbb{T}^3)$ of $A$, there exists a residual subset $\r\subset\v$ consisting of diffeomorphisms with non-simple
spectrum.

\bigskip

\subsection{The proof of Theorem \ref{Thm:dichotomy}}
\bigskip

Bonatti and Crovisier\cite{BC} proved that, generically, a
volume-preserving diffeomorphism is transitive in a compact,
connected manifold with a volume-preserving diffeomorphism.
\smallskip

\begin{Lem}[Thm1.3 in \cite{BC}]\label{Lem:Onehomclass}
Suppose M is connected. Then there is a residual subset $G_\omega$
in $\Diff^1_\omega(M)$ such that any $f\in G_\omega$ is transitive.
Moreover, $M$ is the unique homoclinic class.
\end{Lem}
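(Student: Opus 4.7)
The plan is to adapt Bonatti--Crovisier's dichotomy for chain recurrence classes to the conservative category, combining Poincar\'e recurrence with the $C^1$ connecting lemmas. The initial observation is that for every $f\in\Diff^1_\omega(M)$ the volume $\omega$ is an invariant probability of full support, so Poincar\'e's recurrence theorem forces almost every point to be recurrent; consequently the non-wandering set $\Omega(f)$ and the chain recurrent set $CR(f)$ both coincide with $M$. Thus, in the conservative world, the only obstruction to $f$ being transitive (and $M$ being a single homoclinic class) is that $M$ might decompose into several chain recurrence classes or that the periodic saddles might split into several homoclinic classes.

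Next I would assemble the standard generic ingredients, all available in the conservative $C^1$ category. From the Pugh--Robinson closing lemma together with a conservative Kupka--Smale theorem (Robinson), one obtains a residual set $\mathcal{R}_1\subset \Diff^1_\omega(M)$ on which every periodic orbit is hyperbolic and hyperbolic periodic points are dense in $M$. From the conservative version of the connecting lemma for pseudo-orbits (Arnaud, Bonatti--Crovisier, Xia), one obtains a second residual set $\mathcal{R}_2\subset \Diff^1_\omega(M)$ on which every chain recurrence class containing a hyperbolic periodic point $p$ coincides with the homoclinic class $H(p,f)$. Combining these and using $CR(f)=M$, each chain recurrence class of $f\in\mathcal{R}_1\cap\mathcal{R}_2$ is a homoclinic class of some hyperbolic saddle, and the collection of these classes covers $M$.

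The final step is to collapse everything into a single class. For this I would use a perturbation/semi-continuity scheme: the number of chain recurrence classes is upper semi-continuous with respect to the Hausdorff topology on invariant compact sets, and the conservative connecting lemma for pseudo-orbits supplies arbitrarily small volume-preserving perturbations that merge any two prescribed chain recurrence classes (any two points are chain-related because $CR(f)=M$, which is exactly the hypothesis that lets one apply the pseudo-orbit connecting lemma). Intersecting the resulting open-and-dense sets over a countable basis of pairs of open sets, and with $\mathcal{R}_1\cap\mathcal{R}_2$, produces the required residual $G_\omega$ on which $M$ is a single homoclinic class and on which transitivity follows by connecting any two open sets through a heteroclinic chain. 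The principal obstacle is the conservative connecting lemma itself: the bump perturbations in Hayashi's and Wen's original arguments destroy the volume form, so one must use the divergence-free perturbations of Arnaud--Xia and verify that the Bonatti--Crovisier pseudo-orbit iteration stays inside $\Diff^1_\omega(M)$. Once that technical core is granted, the rest of the argument is a fairly mechanical Baire-category assembly.
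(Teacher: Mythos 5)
The paper does not prove this lemma at all: it is quoted verbatim as Theorem~1.3 of Bonatti--Crovisier's \emph{R\'ecurrence et g\'en\'ericit\'e} (\cite{BC}), so there is no in-paper argument to compare your sketch against. Judged on its own terms as a reconstruction of the Bonatti--Crovisier proof, your outline correctly identifies the main ingredients (Poincar\'e recurrence giving $\Omega(f)=CR(f)=M$, conservative Kupka--Smale plus the closing lemma for density of hyperbolic saddles, and the conservative pseudo-orbit connecting lemma forcing chain classes to coincide generically with homoclinic classes), but it goes astray at the key structural point.

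The crucial fact you are dancing around is that for \emph{every} $f\in\Diff^1_\omega(M)$ with $M$ connected, the whole manifold is already \emph{chain transitive}; this is not a generic property and no perturbative merging is required. The reason is not merely ``$CR(f)=M$'' (as you write, ``any two points are chain-related because $CR(f)=M$''): chain recurrence of every point does not by itself imply chain transitivity (think of a disconnected manifold, where $CR=M$ but there are several chain classes). The correct argument fixes $\varepsilon>0$, observes that ``$x$ and $y$ are connected by $\varepsilon$-pseudo-orbits in both directions'' is an equivalence relation on all of $M$ (reflexivity uses $CR(f)=M$), that its classes are open, and hence, being a partition of $M$ into open sets, by connectedness there is a single class; letting $\varepsilon\to0$ gives chain transitivity of $M$. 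Once this is in place, your entire final paragraph is superfluous: there are never several chain recurrence classes to merge, so there is nothing for a ``perturbation/semi-continuity scheme'' to do, and the claim that the number of chain classes is upper semi-continuous in the Hausdorff topology is in any case not a correct or standard tool. The Bonatti--Crovisier generic equality ``chain class of $p$ $=$ $H(p,f)$'' applied to the unique chain class $M$ immediately gives $M=H(p,f)$, and transitivity of $f$ follows since homoclinic classes are transitive (or directly from the generic shadowing consequences of the pseudo-orbit connecting lemma). So the ingredients you list are the right ones, but the connectedness argument is the missing load-bearing step, and the last third of your plan should be deleted rather than repaired.
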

\bigskip

To prove Theorem \ref{Thm:dichotomy}, we need introduce some
preliminary notions and lemmas first.
\smallskip

\begin{Def}\label{Def:linearsys}
We shall call any $4-$uple $\mathcal{A} = (\Sigma, f,\mathcal{E},A)$ to be a linear cocycle of dimension $d$
if:
\begin{enumerate}
\item[$\bullet$]$\Sigma$ is a set and $f:\Sigma\to\Sigma$ is a one-to-one map;
\item[$\bullet$]$\pi:\mathcal{E}\to\Sigma$ is a linear bundle of dimension $d$ over $\Sigma$, whose fibers are endowed with an Euclidean metric $\|\cdot\|$. The fiber over the point $x\in\Sigma$ will be denoted by $\mathcal{E}_x$;
\item[$\bullet$] $A : x\in\Sigma\mapsto A_x\in GL(\mathcal{E}_x,\mathcal{E}_{f(x)})$ is a map.
\end{enumerate}
We shall say that a linear cocycle $A$ is bounded if there exists a constant $K > 0$ such that,
for any $x\in\Sigma$, we have $\|A_x\| < K$ and $\|A^{-1}x\|< K$. We call $K$ a bound of $\mathcal{A}$ .
\end{Def}

{\bf Remark} Here the base space $\Sigma$ can be taken arbitrarily, even not a topology space.

Let $(\Sigma, f, \mathcal{E},A)$ be a linear system, an invariant
subbundle is a collection of linear subspaces $F(x) \subset \mathcal{E}_x$ whose dimensions do
not depend on $x$ and such that $A(F(x)) = F(f(x))$. An $A-$invariant splitting
$F\oplus G$ is given by two invariant subbundles such that $\mathcal{E}_x = F(x)\oplus G(x)$ at
each $x\in\Sigma$.

\begin{Def} Let $(\Sigma, f, \mathcal{E},A)$ be a linear system and $\mathcal{E} = F\oplus G$ an $A-$invariant
splitting. We say that $F\oplus G$ is a dominated splitting if there exists
$n\in\mathbb{N}$ such that
$$\|A^{(n)}(x)|_F\|\|A^{(-n)}(f^n(x))|_G\| < 1/2$$
for every $x\in\Sigma$. We write $F\prec G$.
\end{Def}

If we want to emphasize the role of n then we say that $F\oplus G$
is an $n-$dominated splitting and write $F\prec_n G$. Finally, the
dimension of the dominated splitting is the dimension of the
subbundle $F$.

\smallskip

\begin{Def}\label{rank}

Let $A$ be a linear map on a $d-$dimensional linear space. The
complex eigenvalues $(\lambda,\,\bar{\lambda})$ of $A$ is called of
rank $(i,\,i+1)$, where $1\leq i\leq d-1$, if the moduli of all its
other eigenvalues are different from $|\lambda|$ and the number of
the eigenvalues which are less than $|\lambda|$ coincide with $i-1$.
\end{Def}
\smallskip

The following is Theorem 3.6 in \cite{LL} with more details.

\begin{Lem}[Theorem 3.6 in \cite{LL}]\label{Thm:complexeigenvalue}
Given $K>0$ and $\varepsilon>0$ there is $\ell\in\mathbb{N}$ such that for any linear periodic system $\mathcal{A} = (\Sigma, f, \mathcal{E},A)$ bounded by $K$, one has that it admits a finest dominated splitting $E = E_1\oplus_{\prec_\ell}
E_2
\oplus_{\prec_\ell}\cdots\oplus_{\prec_\ell} E_k$
if and only if we can not get a complex eigenvalue of the following ranks
$$(\tau_1,\tau_1 + 1), (\tau_2,\tau_2 + 1), \cdots , (\tau_{k-1}, \tau_{k-1} + 1)$$
by any $\varepsilon-$perturbation of $\mathcal{A}$ , where $\tau_i =
\sum^i_{j=1} dim E_j , i = 1, 2,\cdots, k.$
\end{Lem}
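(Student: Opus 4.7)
The assertion is a quantitative dichotomy between the existence of a specific dominated splitting and the impossibility of perturbative creation of complex eigenvalues at matching ranks; the two directions use different halves of the Bonatti--D\'iaz--Pujals perturbation theory for linear cocycles, as refined in \cite{LL}.

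Forward direction ($\Rightarrow$). Assume $\mathcal{A}$ admits the finest $\ell$-dominated splitting $E_1\oplus_{\prec_\ell} E_2\oplus_{\prec_\ell}\cdots\oplus_{\prec_\ell}E_k$, and suppose $\ell$ has been chosen large enough (depending only on $K$ and $\varepsilon$) that the defining inequality $\|A^{(\ell)}|_F\|\,\|A^{(-\ell)}|_G\| < 1/2$ survives any $\varepsilon$-perturbation of $\mathcal{A}$ with slack to spare; since a single $\varepsilon$-perturbation changes each block $A^{(\pm\ell)}$ by a multiplicative factor controlled by $K$ and $\varepsilon$, taking $\ell$ large absorbs this error. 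Hence for every $\varepsilon$-perturbation $\mathcal{A}'$ the continuations of the $E_j$ constitute a dominated splitting of $\mathcal{A}'$, and the moduli of the eigenvalues of the perturbed return map split into $k$ clusters separated by the splitting gaps at the indices $\tau_i$. A complex conjugate pair $\lambda,\bar\lambda$ has identical moduli $|\lambda|=|\bar\lambda|$, so it cannot straddle any of those gaps; this rules out complex eigenvalues of rank $(\tau_i,\tau_i+1)$ for each $i$.

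Reverse direction ($\Leftarrow$). I argue by contrapositive: assuming that the finest dominated splitting of $\mathcal{A}$ does not have the cumulative-dimension profile $\tau_1,\ldots,\tau_{k-1}$, I exhibit an $\varepsilon$-perturbation of $\mathcal{A}$ with a complex eigenvalue of one of the forbidden ranks. By hypothesis there is some $i$ for which no $\ell$-dominated splitting of rank $\tau_i$ exists, for the $\ell$ fixed above. The key input is the perturbation machinery of \cite{BDP} (in the periodic linear-system form developed in \cite{LL}): from the failure of $\ell$-domination at rank $\tau_i$ one locates a finite orbit segment along which the candidate ``strong'' subbundle of dimension $\tau_i$ and its complementary ``weak'' subbundle make angles that fail the required contraction. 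By applying $\varepsilon$-small local rotations at isolated points of the period--one mixes the two Oseledec directions sitting at positions $\tau_i$ and $\tau_i+1$, forcing the return map to act on a two-plane by a genuine rotation composed with expansion of the ambient modulus. Once the injected angle is above the logarithmic ratio between the two eigenvalues, the spectrum of the perturbed return map develops a complex conjugate pair precisely of rank $(\tau_i,\tau_i+1)$, contradicting the hypothesis.

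\textbf{Main obstacle.} The delicate point is the single, uniform choice of $\ell=\ell(K,\varepsilon)$ that makes both directions work. In the forward direction $\ell$ must be large enough that a generic $\varepsilon$-perturbation cannot destroy the domination inequality. In the reverse direction the same $\ell$ must be \emph{small} enough that whenever $\ell$-domination fails, the BDP-style angle-mixing perturbation can be carried out within the $\varepsilon$-budget; this requires the failure of $\ell$-domination to produce geometric room proportional to the perturbation size, which is the content of the BDP/\cite{LL} extraction lemma. Matching these two thresholds, and tracking how the rotation inserted on a single two-plane interacts with the surrounding splitting so that no unintended eigenvalue collisions occur at other positions, is where the proof concentrates its technical weight. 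With $\ell$ chosen once and for all from these two constraints, the logical structure sketched above yields the stated equivalence.
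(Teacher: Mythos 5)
The paper does not prove this lemma; it is cited verbatim (``Theorem 3.6 in \cite{LL} with more details''), so there is no internal argument to compare against, and the task is really to judge whether your sketch plausibly reconstructs the argument from \cite{LL} and \cite{BDP}.

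There is a genuine sign error in the quantitative heart of your argument. In the forward direction you assert that ``taking $\ell$ large absorbs this error,'' on the grounds that a single $\varepsilon$-perturbation changes the blocks $A^{(\pm\ell)}$ by a multiplicative factor ``controlled by $K$ and $\varepsilon$.'' That factor is not fixed: perturbing each $A_x$ by $\varepsilon$ changes the $\ell$-fold product multiplicatively by roughly $(1+C\varepsilon)^\ell$, which \emph{grows} with $\ell$, while the defining slack in the domination inequality $\|A^{(\ell)}|_F\|\,\|A^{(-\ell)}|_G\|<1/2$ is a fixed constant. Taking $\ell$ larger therefore makes the domination \emph{less} robust, not more. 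The correct quantitative requirement for the forward direction is $\ell$ \emph{small}, of the order $\log 2/\log(1+C\varepsilon)$: iterating the $\ell$-step estimate along a period-$p$ orbit gives an eigenvalue gap $\gtrsim 2^{p/\ell}$, the accumulated perturbation effect is $\lesssim (1+C\varepsilon)^p$, and the gap survives precisely when $(1/2)^{1/\ell}(1+C\varepsilon)<1$, i.e.\ $\ell\lesssim 1/\varepsilon$. Consequently your ``Main obstacle'' paragraph also has the tension inverted: it is the forward direction that demands an \emph{upper} bound on $\ell$, while the reverse (contrapositive, BDP-type rotation) direction demands $\ell$ large enough that failure of $\ell$-domination furnishes room for an $\varepsilon$-small sequence of rotations along a long periodic orbit. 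The non-trivial content of the theorem is that these two thresholds can be matched by a single $\ell=\ell(K,\varepsilon)$, and your sketch, as written, reasons toward the wrong side of each threshold. The reverse-direction narrative (locating a segment with failing domination and injecting small rotations to merge the $\tau_i$ and $\tau_i+1$ directions) is directionally right, but it inherits the same inverted $\ell$-dependence and does not actually verify that the needed rotation fits within an $\varepsilon$-budget. To repair the argument you would need to replace ``$\ell$ large'' by ``$\ell$ small'' in the forward direction, make the exponential-gap-versus-accumulated-perturbation estimate over a full period explicit, and then invoke the \cite{BDP}/\cite{LL} rotation lemma for the chosen $\ell$ in the reverse direction.
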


\bigskip

{\bf Proof of Theorem \ref{Thm:dichotomy}:}\,\, Let $G_\omega$ be
the residual subset of $\Diff^1_\omega(M)$ determined by Lemma
\ref{Lem:Onehomclass}. Define $$\mathcal{R}=G_\omega\cap
\mathcal{PH}^1_\omega(M).$$ Then $\mathcal{R}$ is a residual subset $
\mathcal{PH}^1_\omega(M)$ of diffeomorphisms $f$ such that 
$M$ is the unique homoclinic class.

For any diffeomorphism $f\in\mathcal{R}$, we take $K=\max_{x\in M}\{\|Df|_{E^c}\|,\|Df^{-1}|_{E^c}\|\}$.
Then we obtain a positive integer $\ell$ by Lemma \ref{Thm:complexeigenvalue}.
If $E^c$ admits an $\ell-$finest dominated splitting as the form (1.5), the proof is done. Otherwise,
there is an integer $s<i<s+c$, such that $E^c(x)$ has no dominated decomposition $E^c(x)=E(x)\oplus_\prec F(x)$ with
$dim E=i-s$ for $x$ in some positive-measure subset of $M$.
Then there are
periodic points such that the cocycles defined over these points 
do not have dominated
decomposition $E^c=E\oplus_{\prec_n} F$ with $dim E=i-s$ and uniform time $n$,  for any integer $n\in \mathbb{N}$. 
Let $$ \Sigma=M,\,\mathcal{E}=E^c,\,A=Df|_{E^c}.$$ Then
$\mathcal{A}=(\Sigma,f,\mathcal{E},A)$ is a linear cocycle bounded
by $K$ over an infinite periodic system having transitions but without dominated splitting $E^c=E\oplus_\prec F$ where $dim E=i-s$. Now
using Lemma \ref{Thm:complexeigenvalue}, for any $\varepsilon>0$,
we obtain a $\varepsilon-$pertubation $B$ of $A$ and a periodic
point in $\Sigma$, such that $M_{q,B}=B(f^{p(q)-1}q)\circ\cdots\circ
B(q)$ has a complex eigenvalue of rank $(i,i+1)$. Applying Franks'
Lemma, we get a $C^1-$perturbation $g\in \mathcal{PH}^1_\omega(M)$ of $f$,
which coincides with $f$ out of an arbitrarily small neighborhood of
$Orb(q)$, equals to $f$ in $Orb(q)$, and whose derivation satisfying
$$Dg|_{E^c(g,f^iq)}=B_{f^iq},\,i=0,1,...,p(q)-1.$$
Hence $Dg^{p(q)}_q$ has a complex eigenvalue.

\qed
\bigskip

\subsection{The proof of Theorem \ref{t.ergodicfarfromtangency}}
\smallskip

In this subsection, we focus on conservative diffeomorphisms far from homoclinic tangencies and discuss their two generic properties: one is the property of admitting a dominated splitting as (1.5) (Lemma \ref{Lem:farfromtangency}) and the other is ergodicity (Lemma \ref{l.denseergodic}).

\begin{Lem}\label{Lem:farfromtangency}
For $C^1$ generic $f\in \Diff^1_\omega(M)$, $(d=dim M\geq3)$, it is
$C^1$ far away from tangencies if and only if there exists a
dominated splitting $E^s\oplus E^c\oplus E^u$ with two non-trivial
extreme subbundles and the finest dominated splitting of the center
bundle is of the form
$$E^c(x)=\oplus^c_{i=1}E^{ci}(x)\mbox{ with }dim E^{ci}(x)=1,\mbox{
for all }i\in\{1,2,...,c\},\eqno(1.6)$$ $\omega-a.e.x\in M,$ where
$c=dim E^c$.
\end{Lem}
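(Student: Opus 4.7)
I would prove the two implications separately. For the ``if'' direction, I would note that a dominated splitting $E^s\oplus E^{c1}\oplus\cdots\oplus E^{cc}\oplus E^u$ with $\dim E^{ci}=1$ persists under $C^1$ small perturbations, so the property is $C^1$ open. The existence of such a splitting \emph{robustly} forbids homoclinic tangencies: if a nearby $g$ had a tangency at a point $x$ on the orbit of some hyperbolic $p$, then $T_xW^s(p)\cap T_xW^u(p)\neq\{0\}$ would force a two-dimensional subspace of $T_xM$ inside the sum of two consecutive bundles in the finest dominated splitting along the continuation, on which the splitting cannot be refined one-dimensionally; this contradicts the assumed one-dimensionality. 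Hence $f\notin\overline{HT}$.

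For the ``only if'' direction, fix a residual set $\mathcal{R}\subset\Diff^1_\omega(M)$ satisfying Lemma \ref{Lem:Onehomclass}, i.e.\ $M$ is the unique homoclinic class of every $f\in\mathcal{R}$, together with density of hyperbolic periodic orbits (another standard $C^1$ generic property for conservative diffeomorphisms via Bonatti--Crovisier). For $f\in\mathcal{R}\setminus\overline{HT}$, the machinery of Wen \cite{W} (as refined in \cite{LLS}, \cite{LL2}) yields a global dominated splitting on the homoclinic class, which by generic uniqueness of the class is all of $M$; extremal subbundles in this splitting are uniformly hyperbolic (otherwise standard perturbation lemmas in the far-from-tangency regime would produce tangencies), so we may write it as $E^s\oplus E^c\oplus E^u$ with $E^s$ and $E^u$ nontrivial and hyperbolic.

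Suppose, by contradiction, that some block $E^{ci}$ in the finest dominated splitting of $E^c$ has dimension at least $2$. Set $K=\max_{x\in M}\{\|Df|_{E^c}\|,\|Df^{-1}|_{E^c}\|\}$ and let $\ell=\ell(K,\varepsilon)$ be as in Lemma \ref{Thm:complexeigenvalue}. Since the splitting of $E^c$ cannot be refined across $E^{ci}$, the restriction of $Df$ to $E^c$ over the (dense) set of periodic orbits is a bounded linear periodic cocycle that does \emph{not} admit an $\ell$-dominated decomposition of dimension equal to the rank where $E^{ci}$ sits. Lemma \ref{Thm:complexeigenvalue} then supplies an $\varepsilon$-perturbation $B$ of the cocycle with a periodic point whose linear period map has a complex eigenvalue of the corresponding rank. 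A conservative Franks'-type lemma (Bonatti--Diaz--Pujals, Arbieto--Matheus) realizes $B$ as the derivative of a volume-preserving diffeomorphism $g\in\Diff^1_\omega(M)$ arbitrarily $C^1$ close to $f$, with a periodic point $q$ carrying a complex center eigenvalue. Finally, using the Gourmelon/Bonatti--Diaz rotation mechanism, the complex eigenvalue can be unfolded by a further small conservative perturbation to produce a homoclinic tangency at (the continuation of) $q$, contradicting $f\notin\overline{HT}$. This contradiction forces $\dim E^{ci}=1$ for every $i$.

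The main obstacle is the last step: producing a genuine homoclinic tangency from a complex center eigenvalue while remaining in $\Diff^1_\omega(M)$. In the dissipative setting this is classical, but here one must use the conservative versions of Franks' lemma and of the tangency-creation argument, ensuring that the local perturbations supporting the rotation of invariant directions (and the subsequent reduction of the angle between $W^s(q)$ and $W^u(q)$) can be chosen with compact support inside small flow boxes preserving $\omega$. Making this perturbation cascade precise, and verifying that the uniform constants in Lemma \ref{Thm:complexeigenvalue} transfer correctly through the realization step, is where the genuine work lies.
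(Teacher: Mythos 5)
Your argument diverges from the paper's in the crucial ``only if'' direction, and the divergence introduces a real gap. The paper's route is shorter and more structural: it combines Lemma~\ref{Lem:Onehomclass} (generically $M$ is the unique homoclinic class) with the conservative version of the index-interval theorem, Lemma~\ref{Lem:ABCDW}, so that $f$ has hyperbolic periodic points of every index in $[i_0,i_1]\cap\mathbb{N}$. The equivalence results of Wen/Liang--Liu--Sun (\cite{W}, \cite{LLS}) for diffeomorphisms far from tangencies then produce an $i$-dominated splitting over the (dense) periodic set of each intermediate index, and stacking these one index at a time forces the center blocks to be one-dimensional. Nontriviality and hyperbolicity of $E^s,E^u$ are extracted from conservativity ($i_0\geq1$, $i_1\leq d-1$) and from Ma\~n\'e's Ergodic Closing Lemma in its volume-preserving form. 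Your proposal does not invoke the index-interval property at all, so you have no direct handle on which indices actually occur among periodic points, and you never argue that the extreme bundles are nontrivial; you simply assert it.

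The more serious problem is your replacement of the index-interval argument by the chain ``non-refinable center block $\Rightarrow$ complex eigenvalue (Lemma~\ref{Thm:complexeigenvalue} + conservative Franks) $\Rightarrow$ homoclinic tangency.'' The first two arrows are fine and indeed mirror the paper's proof of Theorem~\ref{Thm:dichotomy}, but the last one --- unfolding a complex center eigenvalue at a periodic point into a genuine $C^1$ conservative homoclinic tangency --- is not a citation-level fact. It requires controlling the geometry of stable/unstable manifolds after the rotation, producing a quasi-transverse intersection in the relevant two-dimensional center block, and then flattening the angle, all by volume-preserving perturbations supported in small flow boxes, with uniform control over the period. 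You flag this yourself as ``where the genuine work lies,'' but that acknowledgment is precisely an admission that the key step is missing: as written, the proof reduces a statement about domination to a harder, unproved statement about tangency creation in $\Diff^1_\omega(M)$. The paper avoids this entirely by never going through complex eigenvalues in this lemma; you should either supply a complete conservative tangency-creation argument (in the spirit of Gourmelon, adapted to $\Diff^1_\omega$), or adopt the paper's index-interval route.

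Your ``if'' direction is essentially correct in spirit but also stated loosely: a homoclinic tangency at a hyperbolic $p$ does not by itself produce a ``two-dimensional subspace inside the sum of two consecutive bundles'' --- you need to argue that the tangency must be seen inside the center, using the hyperbolicity of $E^s,E^u$ and the fact that any tangency direction violates domination between some pair of adjacent one-dimensional center blocks. The paper is terse here too, but the intended argument is simply robustness: the splitting $E^s\oplus E^{c1}\oplus\cdots\oplus E^{cc}\oplus E^u$ with hyperbolic extremes persists in a $C^1$ neighborhood, and on any such neighborhood no tangency can occur because all periodic saddles inherit an index-$i$ dominated splitting for every relevant $i$; you should make that precise rather than appeal to a vague ``two-dimensional subspace'' contradiction.
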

\bigskip

Abdenur-Bonatti-Crovisier-Diaz-Wen claim that, for $C^1$-generic diffeomorphisms, the set of indices of the (hyperbolic) periodic points in a chain recurrence class (in fact, such classes are homoclinic ones) form an
interval in $\mathbb{N}$.
Applying the connecting lemma and Franks' lemma for conservative diffeomorphisms, we obtain the conservative version of this result (Theorem 1.1 in \cite{ABCDW}) as follows:

\begin{Lem}[the conservative version of Thm 1.1 in \cite{ABCDW}]\label{Lem:ABCDW}
There is a residual subset $I$ of $\Diff_\omega^1(M)$ of
diffeomorphisms $f$ such that, for every $f\in I$, any homoclinic
class $H(p, f)$ containing hyperbolic saddles of indices $\alpha$
and $\beta$ contains a dense subset of saddles of index $\tau$ for
all $\tau\in [\alpha, \beta]\cap\mathbb{N}$.
\end{Lem}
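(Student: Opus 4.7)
The plan is to mimic the original proof of Theorem 1.1 in \cite{ABCDW} step by step, replacing every dissipative perturbation tool by its volume preserving counterpart. The three ingredients needed are: (i) a generic residual subset on which every periodic orbit is hyperbolic with simple spectrum and on which any two saddles sitting in the same homoclinic class are pairwise homoclinically related; (ii) the connecting lemma, used to create heteroclinic intersections and to close up heteroclinic cycles into single periodic orbits; and (iii) Franks' lemma, used to prescribe the derivative cocycle along the newly created periodic orbit. For $\Diff^1_\omega(M)$ each of these has a known conservative counterpart: (i) follows from conservative Kupka--Smale genericity together with the generic transitivity/homoclinic class statement of Lemma \ref{Lem:Onehomclass}; (ii) is the conservative connecting lemma of Arnaud and Bonatti--Crovisier; and (iii) is the conservative Franks' lemma (Bonatti--Diaz--Pujals, see also Arbieto--Matheus), producing volume preserving $C^1$-perturbations realizing any linear cocycle sufficiently close to the original one along a prescribed periodic orbit, subject only to the constraint that the total product over one period has determinant one.

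Fix $f$ in the residual set, a homoclinic class $H(p,f)$ containing saddles $q_\alpha$ and $q_\beta$ of indices $\alpha<\beta$, and an intermediate integer $\tau\in(\alpha,\beta)$. Since $q_\alpha$ and $q_\beta$ are homoclinically related, the conservative connecting lemma yields a heteroclinic cycle $q_\alpha\to q_\beta\to q_\alpha$ passing arbitrarily close to any preassigned point of $H(p,f)$. A further application of the connecting lemma closes this cycle up into a single periodic orbit $\gamma$ of period $N=n_\alpha+n_\beta+O(1)$ that spends $n_\alpha$ iterates shadowing $q_\alpha$ and $n_\beta$ iterates shadowing $q_\beta$. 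The derivative $Df^N$ along $\gamma$ is, up to a small bounded error coming from the transition bridges, a product of $n_\alpha$ copies of $Df(q_\alpha)$ and $n_\beta$ copies of $Df(q_\beta)$; by choosing the ratio $n_\alpha/n_\beta$ correctly and superposing a small conservative Franks' perturbation supported on the transition bridges, one tunes the signature of this product to exactly $(\tau,d-\tau)$ and thus produces a hyperbolic saddle of index $\tau$ arbitrarily close to any preassigned point of $H(p,f)$.

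To upgrade density to residuality, I would fix rational data: a countable dense family $\{x_n\}\subset M$, a resolution $1/m$, integers $\alpha,\beta,\tau$ with $\alpha\le\tau\le\beta$, and a countable generating family of hyperbolic continuations of periodic orbits of $f$. For each choice of such data the property ``$H(p,f)$ contains a saddle of index $\tau$ at distance less than $1/m$ from $x_n$'' is $C^1$-open in $f$, and by the previous paragraph it is open and dense inside the $C^1$-open set on which $H(p,f)$ contains saddles of both indices $\alpha$ and $\beta$. Intersecting the corresponding open and dense sets over the countable family of data and taking the further intersection with the residual set given by (i) produces the desired residual subset $I\subset\Diff_\omega^1(M)$.

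The main obstacle lies in ingredient (iii): the conservative Franks' lemma only allows perturbations whose derivative along one period has determinant one, so the derivatives along $\gamma$ cannot be prescribed fully independently as in the dissipative case. The point that makes the argument go through is that the two building blocks $Df(q_\alpha)$ and $Df(q_\beta)$ already have determinant one along their respective periods, so the total determinant along $\gamma$ is automatically equal to one and only the signature of the product needs adjusting; this is precisely the freedom left by the volume constraint, and it is enough for the conservative Franks' lemma to realize every intermediate integer index $\tau\in[\alpha,\beta]\cap\mathbb{N}$.
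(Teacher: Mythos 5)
Your proposal is correct and takes exactly the route the paper intends: the paper gives no detailed proof of Lemma~\ref{Lem:ABCDW} but simply asserts that Theorem~1.1 of \cite{ABCDW} transfers to the conservative setting once the connecting lemma and Franks' lemma are replaced by their volume-preserving counterparts, which is precisely what you spell out. One small slip worth noting: since $q_\alpha$ and $q_\beta$ have different indices they cannot be homoclinically related in the usual transverse sense, so what the conservative connecting lemma actually provides is the nontransverse branch of a heterodimensional cycle between them (the transverse branch $W^u(q_\alpha)\cap W^s(q_\beta)\neq\emptyset$ being robust because $\dim W^u(q_\alpha)+\dim W^s(q_\beta)>d$); once this cycle exists, the rest of your signature-tuning and residuality argument goes through as written.
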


\bigskip

Recall that a point $x\in M$ is called $(C^1)$ $i-$preperiodic of $f$, $0\leq i\leq d$, if for any neighborhood
$\mathcal{U}$ of $f$ in $\Diff^1(M)$ and any neighborhood $U$ of $x$ in $M$, there exist $g\in \mathcal{U}$ and $y\in U$ such
that $y$ is a hyperbolic periodic point of $g$ with index $i$. 
Now we begin to prove one of the main lemma in this subsection:

\bigskip

{\bf Proof of Lemma \ref{Lem:farfromtangency}:}\,\, Let
$$\r=G_\omega\cap I,$$ where $G_\omega$ and $I$ are determined
by Lemma \ref{Lem:Onehomclass} and \ref{Lem:ABCDW}, respectively. 
For $f\in \r$, let $i_0$(resp $i_1$) be the minimal (resp. maximal)
preperiodic index of $f$. Then $i_0\geq 0$ and $i_1\leq d$. By the
definition of preperiodic points, we can assume that $f$ itself
contains index $i_0,i_1$ periodic points. By Lemma \ref{Lem:ABCDW},
$f$ contains $i-$index periodic points for all $i\in [i_0,
i_1]\cap\mathbb{N}$. Since $M$ is the unique homoclinic class by
Lemma \ref{Lem:Onehomclass} and $f$ is far from tangencies, now we obtain a dominated splitting
with center bundles all 1 dimensional by the equivalent conditions for the existence of dominated splitting(for more details, see \cite{LLS, W}).

Since $f$ is conservative, one must have that $i_0\geq1$ and
$i_1\leq d-1$. Otherwise, if $i_0=0$, the minimal of periodic index
can be 0 or 1(There may exist a weak stable subbundle with dimension
1). In the first case, all the Lyapunov exponents of $f$ are
non-negative. Hence the sum of the Lyapunov exponents are nonzero.
This contradicts with the conservative condition. In the second
case, the negative exponent have multiplicity 1 and is close to 0,
but the positive exponents are uniformly far from 0 by the
domination. This again deduces the same contradiction as the first
case. The proof of $i_1\leq d-1$ is analogous.

The rest thing is to prove the two extreme bundles are uniformly
hyperbolic. This can be obtained by showing that it is forbidden to
decrease the index of a periodic point with index $i_0$ or increase
that of a periodic point with index $i_1$ by perturbation. The proof
essentially from $Ma\tilde{n}\acute{e}$'s Ergodic Closing Lemma
\cite{M}, in conservative version (see also \cite{W2} or Theorem B
and Section 4 in \cite{BoDPR} ). This concludes the proof of
necessity.

The sufficiency is ensured by the uniform hyperbolicity of the
extreme subbundles and the domination of the splitting (1.6) of the
center bundle. This ends the whole proof.

\qed

\bigskip

{\bf Remark:}\,\,Dawei Yang told us the analogous consequence for dissipative dynamics of Lemma
\ref{Lem:farfromtangency} may be deduced by his recent joint
work with Crovisier and Sambrino. And he suggested us to omit the
original hypothesis of partial hyperbolicity.

\bigskip

Recall that the integers $s$, $c$ and $u$ denote the dimensions of $f-$invariant subbundles $E^s$, $E^c$ and $E^u$ in the dominated splitting $TM=E^s\oplus E^c\oplus E^u$. To emphasis their dependence on $f$, we write them as $s(f)$, $c(f)$ and $u(f)$. Since $(G1)-(G4)$ in the following proposition are open properties, we combine the preceding lemma with Lemma \ref{Lem:ABCDW}, \ref{Lem:denseofstabacc} and Theorem \ref{Thm:nonzeroexp} and obtain that:

\begin{Prop}\label{p.finestsplittingfarfromtangencies} There is a $C^1$ open and dense subset $\mathcal{O}$ of the volume preserving diffeomorphsims far from tangencies, such that any $f\in \mathcal{O}$ satisfies the following properties.
\begin{itemize}
\item[(G1)] $f$ is partially hyperbolic, it admits a partially hyperbolic splitting admits a finest dominated splitting
$E^s\oplus E^{c1}\oplus \cdots \oplus E^{c,c(f)}\oplus E^u$, where $dim(E^{ci})=1$.

\item[(G2)] $f$ has periodic points with index $dim(E^s),dim(E^s)+1,\cdots,dim(E^s)+c(f)$.

\item[(G3)] $f$ is accessible.

\item[(G4)] There is $0\leq j(f) \leq c(f)$, such that for any integer $1\leq j\leq j(f)$, one has
$$\int \log \|Df|_{E^{c,j}(x)}\|d\omega(x)<0,$$ and for any integer $j(f)+1\leq j\leq c(f)$, we has $$\int \log \|Df|_{E^{c,j}(x)}\|d\omega(x)>0.$$
When $j(f)=0$ (resp. $j(f)=c(f)$), we take $E^{c,j(f)}(x)$ (resp. $E^{c,j(f)+1}(x)$) to be vanished.

\end{itemize}
\end{Prop}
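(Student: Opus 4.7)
The plan is to show each of $(G1)$--$(G4)$ is a $C^1$-open property on the volume-preserving diffeomorphisms far from tangencies, and then construct a chain of perturbations showing the conjunction is $C^1$-dense there. The intersection of four open and dense sets will then furnish the desired $\mathcal{O}$.

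For density, I would start from an arbitrary $f$ far from tangencies and perform three successive perturbations. First, perturb inside the residual subset produced by Lemma~\ref{Lem:farfromtangency}, together with Lemma~\ref{Lem:Onehomclass} and Lemma~\ref{Lem:ABCDW}, to obtain $f_1$ with a partially hyperbolic splitting whose center bundle decomposes into one-dimensional dominated pieces and whose (unique) homoclinic class $M$ contains hyperbolic periodic points of every index in $[\dim E^s(f_1),\,\dim E^s(f_1)+c(f_1)]$; an application of Lemma~\ref{Lem:finestdom} along the way makes the finest dominated splitting robust. This secures $(G1)$ and $(G2)$. Second, apply Lemma~\ref{Lem:denseofstabacc} inside a sufficiently small neighborhood of $f_1$ to produce a stably accessible $f_2$ for which $(G1)$ and $(G2)$ still hold by robustness. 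Third, apply Theorem~\ref{Thm:nonzeroexp} to $f_2$, keeping the perturbation inside the neighborhood on which $(G1)$--$(G3)$ are preserved, to obtain $f_3$ whose every integrated Lyapunov exponent is nonzero.

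Openness is then checked property by property. For $(G1)$, a robust finest dominated splitting has a continuation that remains the finest dominated splitting for nearby diffeomorphisms, so one-dimensionality of each $E^{c,i}$ persists, and nontriviality of $E^s$ and $E^u$ is open by uniform hyperbolicity. For $(G2)$, hyperbolic periodic points and their indices survive $C^1$-perturbation. For $(G3)$, stable accessibility is open by definition. For $(G4)$, each $E^{c,j}$ depends continuously on $f$ as a subbundle, hence $f\mapsto \int_M \log\|Df|_{E^{c,j}(x)}\|\,d\omega(x)$ is continuous on $\Diff^1_\omega(M)$, so the set on which all these $c(f)$ integrals are simultaneously nonzero is $C^1$-open.

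The step I expect to require a short extra argument is the sign pattern asserted in $(G4)$: that there is a threshold $j(f)$ with the integrals negative for $j\leq j(f)$ and positive for $j>j(f)$. This follows from the dominated order $E^{c,1}\prec E^{c,2}\prec\cdots\prec E^{c,c(f)}$, which on each one-dimensional subbundle forces the pointwise comparison of Lyapunov exponents $\lambda_{E^{c,j}}(x)\leq\lambda_{E^{c,j+1}}(x)$ at $\omega$-almost every Oseledec-regular point. Integrating against $\omega$ yields $\int \log\|Df|_{E^{c,j}}\|\,d\omega\leq\int \log\|Df|_{E^{c,j+1}}\|\,d\omega$, and since Theorem~\ref{Thm:nonzeroexp} guarantees none of these values vanishes, this monotonicity forces the required sign pattern with $j(f)$ the largest index whose integral is still negative. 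The main obstacle throughout is ensuring that each perturbation step preserves the properties already achieved; this is precisely what the openness arguments guarantee once the steps are ordered as above.
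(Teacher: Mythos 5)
Your proposal is correct and follows essentially the same route as the paper: the paper's own proof is a one-line observation that (G1)--(G4) are open properties, each achieved densely by combining Lemma~\ref{Lem:farfromtangency}, Lemma~\ref{Lem:ABCDW}, Lemma~\ref{Lem:denseofstabacc}, and Theorem~\ref{Thm:nonzeroexp}. Your write-up usefully fills in the chaining of perturbations within shrinking neighborhoods and, in particular, the monotonicity argument deriving the sign threshold $j(f)$ from the dominated ordering of the one-dimensional center subbundles combined with the nonvanishing supplied by Theorem~\ref{Thm:nonzeroexp}, a point the paper leaves implicit.
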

\bigskip

Recently, F.R.Hertz, M.A.R.Hertz, A.Tahzibi, and R.Ures\cite{HHTU, HHTU2} developed a new criteria of ergodicity and nonuniform hyperbolicity which provided fresh ideas to the Pugh-Shub stable ergodicity conjecture.
Applying Proposition \ref{p.finestsplittingfarfromtangencies} and their criteria, we illustrate the density of ergodicity as the follow which implies the ergodic diffeomorphisms form a residual subset, since it is a $G_\delta$ set.

\begin{Lem}\label{l.denseergodic}
There is a $C^1$ dense subset of diffeomorphisms $\mathcal{E}$ in $\mathcal{O}$, such that any diffeomorphism
in $\mathcal{E}$ is ergodic.
\end{Lem}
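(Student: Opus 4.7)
The plan is, for each $f\in\mathcal{O}$ and each $C^1$ neighborhood $\u$ of $f$, to produce an ergodic $g\in\u$. The main tool will be the ergodicity criterion of Hertz--Hertz--Tahzibi--Ures, which roughly says that a $C^2$ conservative partially hyperbolic diffeomorphism that is accessible and whose center Lyapunov exponents are nonzero almost everywhere is ergodic; crucially, the center--bunching hypothesis of Burns--Wilkinson is not needed when the finest dominated splitting of $E^c$ consists of one--dimensional subbundles, which is precisely our situation by (G1).

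First I would use density of $C^\infty$ diffeomorphisms in $\Diff^1_\omega(M)$, together with the openness of properties (G1)--(G4), to replace $f$ by a smooth $g_0\in\u\cap\mathcal{O}$. Since each $E^{c,j}$ has dimension one, the Birkhoff average of $\log\|Dg_0|_{E^{c,j}}\|$ equals the $j$--th center Lyapunov exponent $\lambda_{s+j}(g_0,x)$ almost everywhere, so (G4) gives $\int \lambda_{s+j}(g_0,x)\,d\omega(x)\neq 0$ for every $j\in\{1,\dots,c(g_0)\}$. Combined with accessibility from (G3) and the finest dominated splitting from (G1), $g_0$ meets the structural hypotheses of the HHTU criterion.

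Next I would apply the HHTU Hopf--type argument. The one--dimensional center subbundles, together with the signs of the integrated exponents given by (G4), provide, at every Oseledec regular point, a measurable Pesin lamination tangent to the ``stable'' center directions and one tangent to the ``unstable'' center directions; concatenating these with the strong stable and unstable foliations of $g_0$ produces a measurable partition along which Birkhoff averages of continuous functions are constant. Accessibility of $g_0$ then forces these averages to be constant $\omega$--almost everywhere, which yields ergodicity in the standard way.

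The principal obstacle is bridging the gap between ``nonzero integrated center exponent'' and ``nonzero pointwise center exponent $\omega$--a.e.'', which is the hypothesis actually required by the Pesin side of the HHTU machinery. On each ergodic component the center exponents are constants, so (G4) only guarantees nonvanishing on a set of components of positive measure. To close this gap I would invoke Theorem \ref{Thm:nonzeroexp} inside a subneighborhood of $g_0$ to perturb once more, using that along the ergodic decomposition the set of components with a vanishing $j$--th center exponent is $su$--saturated (since strong stable/unstable holonomies respect the $j$--th Oseledec datum under domination); accessibility then forces this set to have zero or full measure, and (G4) rules out the full--measure case. The resulting $g\in\u$ is accessible, $C^2$, has one--dimensional center subbundles and nonvanishing center exponents $\omega$--a.e., hence is ergodic by HHTU. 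This proves that $\mathcal{E}$ is $C^1$--dense in $\mathcal{O}$; combined with the fact that ergodicity of a volume--preserving diffeomorphism is a $G_\delta$ condition, this yields the residual subset of ergodic diffeomorphisms claimed in Theorem \ref{t.ergodicfarfromtangency}.
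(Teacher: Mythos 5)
Your proposal takes a genuinely different route from the paper, and there is a gap in the key step.

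The paper does not use the HHTU criterion in the form you state. It uses the saddle-based criterion (Proposition~\ref{p.criteria}): if a $C^{1+\alpha}$ volume-preserving $f$ has a saddle $p$ with $\omega(B^s(p,f))>0$, $\omega(B^u(p,f))>0$ and $\omega(M\setminus(B^s(p,f)\cup B^u(p,f)))=0$, then $f$ is ergodic. The version you invoke---accessibility plus nonvanishing center exponents $\omega$-a.e.\ implies ergodicity whenever the finest dominated splitting of $E^c$ is into line fields---is not a theorem you can cite, and indeed the entire blender machinery of Section 3.3 (Propositions~\ref{p.blender}, \ref{p.blenderfarfromtangencies}, \ref{p.chainblender}) exists precisely to connect the Pesin ergodic components of different indices and to verify the hypotheses of Proposition~\ref{p.criteria}; your outline does not use blenders at all.

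The bridge you propose to pass from nonzero \emph{integrated} center exponents to nonzero \emph{pointwise} ones is the genuine gap. You assert that the set of ergodic components on which the $j$-th center exponent vanishes is $su$-saturated because ``strong stable/unstable holonomies respect the $j$-th Oseledec datum under domination.'' That is false in general: strong holonomies for $C^1$ or even $C^{1+\alpha}$ partially hyperbolic systems are not Lipschitz, they do not transport Lyapunov exponents, and the $j$-th Oseledec subspace is only measurable, not invariant under the strong foliations. So accessibility does not force the set where $\lambda^c_j=0$ to have zero or full measure, and the subsequent perturbation via Theorem~\ref{Thm:nonzeroexp} does not help, since that theorem again controls only integrals. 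The paper avoids this problem entirely: after passing to the $C^{1+\alpha}$ case inside $\mathcal{O}_1$, when $0<j(f)<c(f)$ it observes that domination between $E^{c,j(f)}$ and $E^{c,j(f)+1}$ forces, at $\omega$-a.e.\ $x$, at least one of $\lambda^c_{j(f)}(x)<0$ or $\lambda^c_{j(f)+1}(x)>0$, so $\Lambda^{cs}\cup\Lambda^{cu}$ has full measure; it then uses Pesin stable manifold theory (Proposition~\ref{p.stablemanifoldtheorem}, Remark~\ref{r.stablemanifold}), accessibility (to bring orbits near $p_{c(f)}$), and the chain of $cs$-blenders (Proposition~\ref{p.chainblender}) to show that a.e.\ point of $\Lambda^{cu}$ lies in $B^s(p_{j(f)},f)$ and a.e.\ point of $\Lambda^{cs}$ lies in $B^u(p_{j(f)},f)$. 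The boundary cases $j(f)\in\{0,c(f)\}$ are handled by Burns--Dolgopyat--Pesin (Lemma~\ref{Lem:BDP-ergodic}). To repair your argument you would need to replace the unjustified saturation claim by this blender-plus-Pesin-manifold argument (or an equivalent mechanism for connecting Pesin homoclinic classes of different indices).
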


\bigskip

Before give the proof of Lemma~\ref{l.denseergodic}, we need give some definitions and notations.
\smallskip

\subsubsection*{$\bullet$ Existence of stable manifolds}

For each point $x$ in a compact Riemannian manifold $M$,the Pesin stable manifold
of $x$ is
$$W^s(x)=\{ y\in M : \limsup\limits_{n\rightarrow \infty}\frac{1}{n} \log d(f^n(x),f^n(y))<0\}$$
and the Pesin unstable manifold of $x$, $W^u(x)$, is the Pesin stable manifold for $f^{-1}$.
In the context, we use 
$\mathcal{W}^s$(resp. $\mathcal{W}^u$) to denote the strong stable (resp. unstable) foliation. 
\smallskip

For any $i\in\mathbb{N}$, let $I^i_1=(-1,1)^i$ and $I^i_\varepsilon= (-\varepsilon;\varepsilon)^i$ and denote by $Emb^1(I^i_1,M)$ the set of $C^1$-embeddings of $I^i_1$ on $M$.
Suppose a compact invariant set $\Lambda$ admits a dominated splitting $E\oplus F$. 
The following lemmas are Lemma 3.0.4 and Corollary 3.3 in \cite{PS} for $C^1$ case with high dimension which come from the existence of the dominated splitting on $\Lambda$.

\begin{Lem}[Lemma 3.0.4 in \cite{PS}]\label{l.stablemanifold}
There exist two continuous functions $$\Phi^{cs}: \Lambda\longrightarrow Emb^1(I^{dimE}_1,M)\quad\mbox{ and }\quad\Phi^{cu}: \Lambda\longrightarrow Emb^1(I^{dimF}_1,M)$$ such that, with $W^{cs}_\varepsilon(x)=\Phi^{cs}(x)I^{dimE}_\varepsilon$ and $W^{cu}_\varepsilon(x)=\Phi^{cu}(x)I^{dimF}_\varepsilon$, the following properties hold:
\begin{itemize}
\item[(a)] $T_xW^{cs}_\varepsilon(x)=E(x)$ and $T_xW^{cu}_\varepsilon(x)=F(x)$;
\item[(b)] for any $0<\varepsilon_1<1$, there exists $\varepsilon_2$ such that $f(W^{cs}_{\varepsilon_2}(x))\subset W^{cs}_{\varepsilon_1}(f(x))$ and $f^{-1}(W^{cu}_{\varepsilon_2}(x))\subset W^{cu}_{\varepsilon_1}(f^{-1}x)$;
\end{itemize}
\end{Lem}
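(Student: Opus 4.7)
The plan is to produce $\Phi^{cs}$ and $\Phi^{cu}$ as continuous families of locally invariant plaques by adapting the Hirsch--Pugh--Shub graph transform to a merely dominated splitting. The output will be a plaque family of disks $W^{cs}_\varepsilon(x)$ tangent to $E(x)$ at $x$; it is only pseudo-invariant in the sense of (b) because $E$ need not be contracting in any absolute sense. First, for each $x\in\Lambda$ fix an orthonormal frame of $T_xM=E(x)\oplus F(x)$ depending continuously on $x$ and use $\exp_x$ to represent $f$ near $x$ by local diffeomorphisms $\tilde f_x:T_xM\to T_{f(x)}M$ fixing $0$, with $D\tilde f_x(0)$ respecting the splitting. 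Compactness of $\Lambda$ and continuity of the splitting guarantee that these representations are uniformly $C^1$-close to the linear cocycle $A=Df|_\Lambda$ on a fixed small neighborhood of the origin.

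Second, look for $W^{cs}_\varepsilon(x)$ as the image under $\exp_x$ of the graph of a $C^1$ map $\sigma^{cs}_x:E(x)\to F(x)$ with $\sigma^{cs}_x(0)=0$, $D\sigma^{cs}_x(0)=0$, and uniformly small Lipschitz constant. On the Banach space of continuous sections $x\mapsto\sigma_x$ with a fixed Lipschitz bound, define the graph transform $\mathcal{T}$ by the rule that $\tilde f_x^{-1}(\mathrm{graph}\,\sigma_{f(x)})$ coincides locally with $\mathrm{graph}\,\mathcal{T}(\sigma)_x$ near the origin. Banach's fixed point theorem applied to $\mathcal{T}$ will yield a unique invariant section $\sigma^{cs}$ depending continuously on $x$, and one sets $\Phi^{cs}(x)(t):=\exp_x\bigl(t,\sigma^{cs}_x(t)\bigr)$, which is the desired continuous map $\Lambda\to\mathrm{Emb}^1(I^{\dim E}_1,M)$.

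The main obstacle is the contraction estimate for $\mathcal{T}$ together with the verification that $\mathcal{T}$ preserves the class of admissible graphs. The contraction relies on the domination inequality $\|A^{(n)}(x)|_E\|\cdot\|A^{(-n)}(f^nx)|_F\|<1/2$; after passing to an adapted norm or iterating $n$ steps at once, this controls both the Lipschitz constant of the fibers and the $C^0$-distance between two sections under one application of $\mathcal{T}$. Keeping the transform inside the admissible class requires choosing a sufficiently small scale so that the nonlinear remainder $\tilde f_x-D\tilde f_x(0)$ is small in $C^1$, ensuring that preimages of graphs over $E(f(x))$ remain graphs over $E(x)$. Once $\sigma^{cs}$ is produced, property (a) is immediate from $D\sigma^{cs}_x(0)=0$, and property (b) is the fixed point equation read at small scales: given $\varepsilon_1>0$, one chooses $\varepsilon_2$ so small that $\tilde f_x$ sends the piece of the graph over $I^{\dim E}_{\varepsilon_2}$ into the piece over $I^{\dim E}_{\varepsilon_1}$, possible by uniform continuity of the family and $\tilde f_x(0)=0$. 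The construction of $\Phi^{cu}$ is identical, applied to $f^{-1}$, whose dominated splitting has the roles of $E$ and $F$ swapped.
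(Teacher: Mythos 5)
The paper does not prove this lemma; it is cited verbatim as Lemma~3.0.4 of Pujals--Sambarino \cite{PS}, so there is no internal proof to compare against. Your Hirsch--Pugh--Shub graph-transform sketch is the standard route and is essentially what the cited source does, so the approach is correct.

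A few technical points are glossed over, none of which undermines the strategy. First, to apply Banach's fixed-point theorem on the space of sections $x\mapsto\sigma_x$ one must first globalize the local representatives $\tilde f_x$ by a cutoff so that each $\tilde f_x$ agrees with $D_xf$ outside a small ball and is globally Lipschitz-close to the linear cocycle; otherwise preimages of graphs are not globally defined. Second, domination alone does not give a contraction of $\mathcal{T}$ in the plain $C^0$ sup metric on sections, since $\|(Df|_F)^{-1}\|$ need not be $<1$; the correct metric is the scaled (projective) one, $d(\sigma,\sigma')=\sup_{u\neq 0}\|\sigma(u)-\sigma'(u)\|/\|u\|$, on sections with $\sigma_x(0)=0$, for which the domination inequality (in an adapted norm) gives a contraction with rate $\|(Df|_{F})^{-1}\|\,\|Df|_{E}\|<1/2$. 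Saying ``after passing to an adapted norm'' hints at this but should be made explicit. Third, the Banach fixed-point argument on Lipschitz sections produces a Lipschitz graph; concluding that $\Phi^{cs}(x)$ is $C^1$ with $T_xW^{cs}_\varepsilon(x)=E(x)$ requires the $C^r$-section theorem (a fiber-contraction argument), not just $D\sigma^{cs}_x(0)=0$. Finally, the ``uniqueness'' of the fixed section is uniqueness within a chosen cutoff and metric; plaque families for merely dominated splittings are not canonically unique, which is harmless here since only pseudo-invariance (b) is claimed.
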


\begin{Cor}[Corollary 3.3 in \cite{PS}]\label{c.stablemanifold}
For $0<\lambda<1$, there exists $\varepsilon>0$ such that, for any $x\in\Lambda$ satisfying $\prod\limits_{j=0}^{n-1}\left\|Df|_{E(f^j(x))}\right\|\leq \lambda^n$ for all $n>0$, one has $diam(f^n(W^{cs}_\varepsilon(x)))\longrightarrow 0$.
\end{Cor}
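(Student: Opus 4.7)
\textbf{Proof plan for Corollary \ref{c.stablemanifold}.}

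The plan is to exploit the fact that $W^{cs}_\varepsilon(x)$ is tangent to $E(x)$ at $x$, so for small enough $\varepsilon$ the derivative of $f$ along tangent vectors to $W^{cs}_\varepsilon(x)$ differs from $Df|_{E(x)}$ by only a small multiplicative factor. Combined with the hypothesis $\prod_{j=0}^{n-1}\|Df|_{E(f^j(x))}\|\le\lambda^n$, iterating will give exponential contraction of the intrinsic length of every curve inside $W^{cs}_\varepsilon(x)$, hence of its diameter.

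Concretely, first fix an intermediate rate $\mu$ with $\lambda<\mu<1$ and pick $\delta>0$ such that $(1+\delta)\lambda<\mu$. Using the continuity of $\Phi^{cs}:\Lambda\to Emb^1(I^{\dim E}_1,M)$ provided by Lemma \ref{l.stablemanifold}, the continuity of $x\mapsto E(x)$ on $\Lambda$, the uniform continuity of $Df$ on a compact neighborhood of $\Lambda$, and the compactness of $\Lambda$, I expect to produce a uniform $\varepsilon_0>0$ such that for every $x\in\Lambda$, every $y\in W^{cs}_{\varepsilon_0}(x)$, and every unit vector $v\in T_yW^{cs}_{\varepsilon_0}(x)$,
$$\|Df(v)\|\le (1+\delta)\,\|Df|_{E(x)}\|.$$

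Next, invoking Lemma \ref{l.stablemanifold}(b), choose $\varepsilon\in(0,\varepsilon_0)$ so small that $f(W^{cs}_\varepsilon(y))\subset W^{cs}_{\varepsilon_0}(f(y))$ uniformly in $y\in\Lambda$. One then shows inductively that $f^j(W^{cs}_\varepsilon(x))\subset W^{cs}_{\varepsilon_0}(f^j(x))$ for every $j\ge 0$, the inclusion being maintained by the diameter estimate that is about to be proved. Integrating the derivative bound along any $C^1$ curve $\gamma\subset W^{cs}_\varepsilon(x)$ yields
$$\mathrm{length}(f^n(\gamma))\le \prod_{j=0}^{n-1}(1+\delta)\|Df|_{E(f^j(x))}\|\cdot\mathrm{length}(\gamma)\le \bigl((1+\delta)\lambda\bigr)^n\mathrm{length}(\gamma)<\mu^n\,\mathrm{length}(\gamma),$$
so that $\mathrm{diam}(f^n(W^{cs}_\varepsilon(x)))\to 0$ at an exponential rate.

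The main obstacle is producing the uniform $\varepsilon_0$: the inequality $\|Df|_{T_yW^{cs}_{\varepsilon_0}(x)}\|\le (1+\delta)\|Df|_{E(x)}\|$ must hold simultaneously over all pairs $(x,y)$ with $x\in\Lambda$ and $y\in W^{cs}_{\varepsilon_0}(x)$, which requires that the tangent spaces $T_yW^{cs}_{\varepsilon_0}(x)$ vary continuously with $(x,y)$; this is exactly where one uses that $\Phi^{cs}$ is continuous \emph{as a map into $Emb^1$} (i.e.\ with $C^1$ control), together with compactness of $\Lambda$. A secondary technicality is that Lemma \ref{l.stablemanifold}(b) only yields forward invariance with \emph{shrinking} radii, so one cannot directly iterate the inclusion $f(W^{cs}_{\varepsilon_0})\subset W^{cs}_{\varepsilon_0}$; instead one closes the induction by combining the one-step inclusion with the contraction estimate, the exponential decay itself supplying the slack needed to keep $f^j(W^{cs}_\varepsilon(x))$ inside $W^{cs}_{\varepsilon_0}(f^j(x))$ at every step.
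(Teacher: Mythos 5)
The paper states this as a direct citation of Corollary 3.3 in \cite{PS} and gives no proof of its own, so there is no in-paper argument to compare against. Your plan is the standard argument behind that corollary and is correct: the multiplicative cone estimate $\|Df(v)\|\le(1+\delta)\|Df|_{E(x)}\|$ is attainable because the uniform lower bound $\min_{x\in\Lambda}\|Df|_{E(x)}\|\ge\|Df^{-1}\|^{-1}>0$ (compactness plus invertibility of $Df$) turns additive $C^1$-closeness of $T_yW^{cs}_{\varepsilon_0}(x)$ to $E(x)$ into a multiplicative bound, and you correctly isolate the only genuinely delicate point, namely that the inclusion $f^j(W^{cs}_\varepsilon(x))\subset W^{cs}_{\varepsilon_0}(f^j x)$ and the length-contraction estimate must be closed by a simultaneous induction in $j$, with the hypothesis $\prod_{i=0}^{j-1}\|Df|_{E(f^i x)}\|\le\lambda^j$ holding at every intermediate $j$ (not merely asymptotically) supplying exactly the slack needed to keep the iterated plaque inside the region where the cone estimate applies.
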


As a corollary of Lemma~\ref{l.stablemanifold} and Corollary~\ref{c.stablemanifold}, we have the following proposition:

\begin{Prop}\label{p.stablemanifoldtheorem}
Let $f$ be a volume preserving diffeomorphism. Suppose $f$ admits a dominated splitting $E^{cs}\oplus E^c_1\oplus E^{cu}$ with $dim(E^c_1)=1$ and the set $\Lambda^s=\{x; \lambda^c(x)<0\}$ has positive volume. Then Lebesgue almost every point $x\in \Lambda^s$ has a local stable manifold $W^s_{loc}(x)$ with dimension $E^{cs}\oplus E^c_1$. Moreover, $W^s_{loc}(x)$ is tangent to the bundle $E^{cs}\oplus E^c_1$ (this means, then the tangent space of $W^s_{loc}(x)$ is contained in a small cone around $E^{cs}\oplus E^c_1$).
\end{Prop}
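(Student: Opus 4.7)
The plan is to combine the plaque families provided by Lemma~\ref{l.stablemanifold} with Pesin-type machinery for nonuniform contraction, exploiting in an essential way that $\dim E^c_1 = 1$.

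First, regroup the given dominated splitting $E^{cs}\oplus E^c_1\oplus E^{cu}$ as the coarser dominated splitting $(E^{cs}\oplus E^c_1)\oplus E^{cu}$ of $TM$, and apply Lemma~\ref{l.stablemanifold} to it. This furnishes a continuous family $\{W^{cs}_\varepsilon(x)\}_{x\in M}$ of locally invariant $C^1$-discs of dimension $\dim E^{cs}+1$, tangent to $E^{cs}\oplus E^c_1$ at their center point and satisfying the partial forward invariance $f(W^{cs}_{\varepsilon_2}(x))\subset W^{cs}_{\varepsilon_1}(f(x))$. Next, I would verify that for $\omega$-a.e.\ $x\in\Lambda^s$ every Oseledec Lyapunov exponent of $Df$ on $E^{cs}(x)\oplus E^c_1(x)$ is strictly negative: the exponent on $E^c_1(x)$ equals $\lambda^c(x)<0$ by hypothesis, and the domination $E^{cs}\prec E^c_1$ forces each exponent of $Df|_{E^{cs}(x)}$ to be bounded above by $\lambda^c(x)$, hence to be negative.

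The main technical step is to upgrade this asymptotic negativity into the pointwise uniform product bound demanded by Corollary~\ref{c.stablemanifold}. Here the one-dimensionality of $E^c_1$ is decisive, because $\log\|Df|_{E^c_1}\|$ is then an honest additive Birkhoff cocycle whose time-average equals exactly $\lambda^c(x)$, and the domination $E^{cs}\prec E^c_1$ controls the one-step norm $\|Df|_{E^{cs}\oplus E^c_1}\|$ by $\|Df|_{E^c_1}\|$ up to a uniformly bounded multiplicative factor (after passing, if necessary, to an adapted Riemannian metric in which the finest dominated splitting is nearly orthogonal). A standard Pliss-Lemma / Pesin-block argument applied to this additive cocycle along the orbit of $\omega$-a.e.\ $x\in\Lambda^s$ then produces some $\lambda(x)<1$ and an iterate $N=N(x)$ with
$$\prod_{j=0}^{n-1}\bigl\|Df|_{E^{cs}\oplus E^c_1}(f^{N+j}(x))\bigr\|\;\le\;\lambda(x)^n\qquad\text{for every }n\ge 1.$$
Corollary~\ref{c.stablemanifold} applied at $f^N(x)$ then gives $\mathrm{diam}(f^n(W^{cs}_\varepsilon(f^N(x))))\to 0$, so $W^s_{\mathrm{loc}}(x):=f^{-N}(W^{cs}_\varepsilon(f^N(x)))$ is a $C^1$-disc through $x$ of dimension $\dim E^{cs}+1$ whose forward iterates collapse to a point. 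Its tangent at $x$ equals $Df^{-N}(E^{cs}(f^N x)\oplus E^c_1(f^N x))=E^{cs}(x)\oplus E^c_1(x)$ by $Df$-invariance of the bundles, and the ``small-cone'' property at nearby points of $W^s_{\mathrm{loc}}(x)$ follows from the continuity of the bundles together with the continuity of the plaque family.

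The hard part is this third step, namely passing from asymptotic Lyapunov information to a genuinely uniform product bound along a forward tail of the orbit. This is the classical Pesin-theoretic transition from Oseledec regularity to hyperbolic times, and it is tractable here precisely because $\dim E^c_1=1$ makes the one-step cocycle on $E^c_1$ additive and hence amenable to the Pliss Lemma; the role of the adapted metric is to absorb the discrepancy between the one-step norms on $E^{cs}\oplus E^c_1$ and on $E^c_1$, which the domination controls up to a bounded factor. In higher center dimensions the relevant one-step operator norm would only be sub-additive and a genuinely different argument would be required.
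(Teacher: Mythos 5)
The paper does not write out a proof of Proposition~\ref{p.stablemanifoldtheorem}: it simply asserts it ``as a corollary of'' Lemma~\ref{l.stablemanifold} and Corollary~\ref{c.stablemanifold}, and your argument is exactly the intended one --- regroup to the coarser dominated splitting $(E^{cs}\oplus E^c_1)\oplus E^{cu}$, take the $W^{cs}$-plaque family from Lemma~\ref{l.stablemanifold}, use $\dim E^c_1=1$ and domination (in an adapted metric) to reduce the one-step norm on $E^{cs}\oplus E^c_1$ to the additive cocycle $\log\|Df|_{E^c_1}\|$, find for $\omega$-a.e.\ $x\in\Lambda^s$ a forward time $N$ along the orbit with the uniform product bound required by Corollary~\ref{c.stablemanifold}, and pull back by $f^{-N}$. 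This is correct and essentially the same route the paper takes; the only cosmetic remark is that Pliss's lemma is stronger than needed --- since the Birkhoff sum $S_n\log\|Df|_{E^c_1}\| - n\log\lambda_0$ tends to $-\infty$ for $\lambda^c(x)<\log\lambda_0<0$, any point of its forward orbit where it attains a running maximum already gives the desired $N$.
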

\bigskip

\begin{Rem}\label{r.stablemanifold}
Suppose $\Lambda$ admits a partially hyperbolic splitting $E^s\oplus E^{cs}\oplus E^c\oplus E^u$ and there is a local strong stable leaf $\mathcal{W}^s_{loc}(x)$ tangent to $E^s(x)$ at some point $x$. If 
the center Lyapunov exponent at $x$ is negative(resp. positive), then $\mathcal{W}^s_{loc}(x)\subset W^{s}_\varepsilon(x)$(resp. $\mathcal{W}^u_{loc}(x)\subset W^{u}_\varepsilon(x)$).
\end{Rem}

\bigskip

\subsubsection*{$\bullet$ Blenders}

Topologically blenders are Cantor sets with distinctive geometric feature which is introduced by C. Bonatti and L.J.Diaz in \cite{BD} to give a slightly different mechanism for constructing non-Axiom A
diffeomorphisms and robust heterodimensional cycles. Later, the notion of blender was motivated by L.J.Diaz in \cite{D} and developed by F.Herz-M.A.Hertz-A.Tahzibi-R.Ures in \cite{HHTU} to produce a local source of stable ergodicity.
\smallskip

\begin{Def}\label{d.blender}Let $q,p$ be hyperbolic periodic points of a diffeomorphism $f$ with stable index $i$ and $i+1$ respectively.

We say that $f$ has a $cs-$blender of index $i$ associated to ($q,p$) if there is a $C^1$ neighborhood $\mathcal{U}$ of $f$ such that, for every $g\in\mathcal{U}$, one has $W^s(p_g)$ is contained in the closure of $W^s(q_g)$, where $q_g,p_g$ are the analytic continuation of $q$ and $p$ for $g$.

Define a $cu-$blender in an analogous way, by concerning $f^{-1}$.
\end{Def}

There are several definitions of blender given in \cite{BD}, \cite{BDV}, \cite{D} and \cite{HHTU2}. Our definition comes from the Definition and Lemma~1.10 in page~369 of \cite{BD}. And the proposition bellow which partially comes from the main result of \cite{HHTU2} holds for our definition.

\begin{Prop}\label{p.blender}
Let $f : M\longrightarrow M$ be a diffeomorphism admitting a dominated splitting $E^{cs}\oplus E^c_1\oplus E^{cu}$ with $dim(E^{cs})=i>0$ and $dim(E^c_1)=1$. Suppose $f$ has a $cs-$blender of index $i$ associated to ($q,p$). Then there is a $C^1$ neighborhood $\mathcal{U}$ of $f$, such that for any $g\in \mathcal U$, and for every $i+1-$dimension disk $D$ which is tangent to a cone around the bundle $E^{cu}\oplus E^c_1$, we have that $$D\pitchfork W^s(p_g)\neq \phi\quad\mbox{ implies }\quad D\pitchfork W^s(q_g)\neq \phi,$$ where $q_g,p_g$ are the analytic continuation of $q$ and $p$ for $g$.
\end{Prop}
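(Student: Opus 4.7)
The plan is to combine the defining property of the $cs$-blender---namely that $W^s(p_g)\subset\overline{W^s(q_g)}$ for every $g$ in a fixed $C^1$-neighborhood of $f$---with a local implicit-function argument controlled by the persistent dominated splitting. First, I would shrink the neighborhood $\mathcal{U}$ of Definition~\ref{d.blender} so that for every $g\in\mathcal{U}$ the dominated splitting $E^{cs}\oplus E^c_1\oplus E^{cu}$ varies continuously with $g$, strict invariant cones around $E^{cs}$, around $E^{cs}\oplus E^c_1$, and around $E^{cu}\oplus E^c_1$ are available, and the hyperbolic continuations $p_g,q_g$ admit uniformly sized local stable manifolds tangent to the appropriate cones.

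Next, fix $g\in\mathcal{U}$, a disk $D$ as in the statement, and a transverse intersection point $x\in D\pitchfork W^s(p_g)$. Using the blender relation, choose a sequence $y_n\in W^s(q_g)$ converging to $x$. Through each $y_n$ the graph-transform / plaque-family argument (Lemma~\ref{l.stablemanifold} applied to the sub-domination $E^{cs}\oplus(E^c_1\oplus E^{cu})$) produces an $i$-disk $\Sigma_n\subset W^s(q_g)$ of uniformly bounded-below size whose tangent spaces lie in the strict $E^{cs}$-cone; the analogous statement for the sub-domination $(E^{cs}\oplus E^c_1)\oplus E^{cu}$ gives that $W^s(p_g)$ near $x$ is an $(i+1)$-disk with tangent in the $E^{cs}\oplus E^c_1$-cone.

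For the local analysis, I would work in $C^1$-adapted coordinates at $x$ realizing $T_xM = \mathbb{R}^i\oplus\mathbb{R}\oplus\mathbb{R}^{d-i-1}$ along the three pieces of the splitting. In these coordinates, $D$ appears as a graph from $(v,w)$ to $u$ (by the $\mathcal{C}^{cu\oplus c}$ cone hypothesis); $W^s(p_g)$ near $x$ appears as a graph from $(u,v)$ to $w$; and each $\Sigma_n$ appears as a graph from $u$ to $(v,w)$, with the $C^0$-distance from $\Sigma_n$ to $W^s(p_g)$ tending to $0$. The transversality of $D$ with $W^s(p_g)$ at $x$ translates into the invertibility at the origin of a specific block matrix built from the derivatives of these three graph maps. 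Consequently, the intersection $D\cap\Sigma_n$ is encoded by an equation $F_n=0$ whose linearization at the natural reference point converges, as $n\to\infty$, to the corresponding invertible linearization encoding $D\pitchfork W^s(p_g)$ at $x$. This linearization therefore remains invertible for all sufficiently large $n$, and the implicit function theorem produces a solution near the origin, i.e.\ a transverse intersection of $D$ with $\Sigma_n\subset W^s(q_g)$, which gives $D\pitchfork W^s(q_g)\neq\emptyset$.

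The hard part will be the uniform $C^1$-control on the accumulating sheets $\Sigma_n$: one must verify that the sheets of the immersed manifold $W^s(q_g)$ that approach $W^s(p_g)$ really do have tangent spaces uniformly inside the $E^{cs}$-cone and size bounded below, and that this control is $C^1$-robust throughout $\mathcal{U}$. This is precisely the content of the persistence of the dominated splitting together with the plaque-family theorem (used via Lemma~\ref{l.stablemanifold} and Corollary~\ref{c.stablemanifold}); once that input is in place, the rest of the argument is a standard transverse-perturbation calculation in the spirit of Proposition~\ref{p.stablemanifoldtheorem}.
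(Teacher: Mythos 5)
Your argument is correct and is exactly the argument the paper sketches (the paper's proof consists of the two sentences: \emph{``We only need to notice that $W^s(q)$ is tangent to the bundle $E^{cs}$. From the definition of $cs$-blender, one can conclude the proof.''}); you have simply filled in the omitted details---the accumulating sheets of $W^s(q_g)$, the uniform plaque size coming from Lemma~\ref{l.stablemanifold} applied to the sub-dominations, the adapted coordinates, and the implicit-function-theorem step---so the approach is the same, just carried out explicitly. One small bookkeeping point worth flagging: for the coordinate/graph step to make sense (you describe $D$ as a graph over the $(v,w)\in\mathbb{R}\oplus\mathbb{R}^{d-i-1}$ variables), the disk $D$ must have dimension $d-i=\dim(E^{cu}\oplus E^c_1)$; the statement's ``$i+1$-dimension'' appears to be a slip in the paper (compare the dimensions in Proposition~\ref{p.chainblender}, where the disk has the same dimension as the cone's bundle), and your proof implicitly uses the corrected dimension.
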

\begin{proof}
We only need to notice that $W^s(q)$ is tangent to the bundle $E^{cs}$. From the definition of $cs-$blender, one can conclude the proof.
\end{proof}

\begin{Def}A diffeomorphism $f$ is called to has a chain of $cs-$blenders of index $(i_0,\cdots,i_1)$ if
\begin{itemize}
\item[(a)] for any $i_0\leq i \leq i_1$, $f$ has a $cs-$blender of index $i$
associated to periodic points $(q_i,p_i)$,
\item[(b)] for any $i_0+1\leq i \leq i_1$, $q_i$ is homoclinically related to $p_{i-1}$.
\end{itemize}
\end{Def}

It is remarkable that the distinctive blender property is a robust property. And homoclinical relation is also an open property. Hence, if a diffeomorphism $f$ has a chain of blenders of index ($i_0,\cdots,i_1$), there is a neighborhood $\mathcal{U}$ of $f$, such that any $g\in \mathcal{U}$ has a chain of blenders of index ($i_0,\cdots,i_1$) as well.
\smallskip

Connecting lemma was proved by Hayashi \cite{H} at first, and
then was extended to the conservative setting by Wen and Xia \cite{WX, W3}(see also M.-C. Arnaud\cite{A}). Later, from the proof of Hayashi's Connecting Lemma, Bonatti and Crovisier\cite{BC} extract a slightly stronger statement in the next lemma which permits to perform dynamically perturbations and create intersections between stable and unstable bundles.

\begin{Lem}[Theorem 2.1(Connecting lemma) in \cite{BC}, Theorem 3.10 in \cite{HHTU2}]\label{Lem:connectinglem}
Let $p$, $q$ be hyperbolic periodic points of a $C^r$ transitive diffeomorphism preserving a smooth measure $m$. Then, there exists a $C^1-$perturbation $g\in C^r$preserving $m$ such that $W^s(p)\cap W^u(q)\neq\emptyset$.
\end{Lem}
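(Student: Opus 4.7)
The plan is to reduce the statement to the classical Hayashi connecting lemma, in the conservative version established by Wen--Xia and Arnaud. The overall strategy has two ingredients: (i) transitivity produces orbit pieces that travel from a neighborhood of $W^u(q)$ to a neighborhood of $W^s(p)$, and (ii) an $m$-preserving $C^1$-small perturbation supported in a thin tube along such an orbit piece glues a branch of $W^u(q)$ to a branch of $W^s(p)$.

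First I would fix small fundamental domains $D^u\subset W^u(q)\setminus\{q\}$ and $D^s\subset W^s(p)\setminus\{p\}$, and choose small trivializing balls $B_\delta(x_0)$ and $B_\delta(y_0)$ with $x_0\in D^u$ and $y_0\in D^s$. By transitivity of $f$ (and the fact that generic points have dense orbits), there exist a point $z\in M$ and an integer $N\geq 1$ with $z\in B_\delta(x_0)$ and $f^N(z)\in B_\delta(y_0)$; shrinking $\delta$ we may demand that $z,f(z),\ldots,f^N(z)$ are pairwise $\delta$-separated. This is the ``pseudo-orbit linking $W^u(q)$ to $W^s(p)$'' to which the connecting lemma will be applied.

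Second, I would apply the volume-preserving Hayashi connecting lemma to this orbit segment. The technical core, developed in Wen--Xia (see also Arnaud), is an $m$-preserving \emph{elementary perturbation}: given two nearby points $a,b$ inside a small ball, one can realize a $C^1$-small, $m$-preserving and $C^r$-smooth diffeomorphism $h$, equal to the identity outside the ball, with $h(a)=b$. This is constructed as the time-one map of a compactly supported divergence-free vector field (equivalently, via Moser's theorem on volume forms). Chaining finitely many such perturbations in pairwise disjoint small tubes along $\{f^i(z)\}_{i=0}^{N}$, and arranging by an inductive selection of ``tiles'' that the cumulative $C^1$-size stays below a prescribed $\varepsilon$, one obtains $g\in\Diff^r_m(M)$ with $\|g-f\|_{C^1}<\varepsilon$ and such that the $g$-image of $W^u_g(q)$ meets $W^s_g(p)$; here one uses the continuation of the invariant manifolds of $p,q$, which varies continuously and is unaffected away from $\mathrm{Orb}(z)$.

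The main obstacle is precisely the volume-preserving character of the elementary perturbation step. In the dissipative Hayashi argument one uses bump functions supported in small balls, which generically destroy $m$; replacing them by flows of divergence-free bump vector fields (or by Moser-type constructions) is what forces the argument of Wen--Xia and Arnaud, and it is what requires the balls along $\mathrm{Orb}(z)$ to be genuinely disjoint and organized into a controllable tiling so that successive conservative perturbations do not interfere and their $C^1$-sizes add up below the prescribed tolerance. Once this conservative perturbation device is in place, the combinatorial scheme that produces the actual intersection $W^s_g(p)\cap W^u_g(q)\neq\emptyset$ is identical to the one in the original Hayashi proof.
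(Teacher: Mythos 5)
The paper does not prove this lemma: it is quoted verbatim as Theorem 2.1 of Bonatti--Crovisier \cite{BC} (see also Theorem 3.10 of \cite{HHTU2}), and the surrounding text merely recalls the lineage Hayashi $\to$ Wen--Xia/Arnaud (conservative) $\to$ Bonatti--Crovisier. Your sketch is broadly consistent with that lineage, but two points should be flagged.

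First, you misidentify the technical obstruction. Disjointly supported elementary perturbations do \emph{not} ``add up'' in $C^1$: if $h_1,\dots,h_k$ are each $\varepsilon$-close to the identity in $C^1$ and have pairwise disjoint supports, the composite perturbation is still $\varepsilon$-close, because the $C^1$-distance is a supremum over $M$, not a sum over tiles. The genuine difficulty, and the content of Hayashi's contribution, is quantitative: a single $\varepsilon$-small perturbation supported in a ball of radius $r$ can displace a point only by $O(\varepsilon r)$, which is far smaller than the $\delta$ needed to glue the two manifold branches. One must therefore spread the displacement over many iterates of the transit orbit, but that orbit may return near itself (self-intersections of the tube), and the perturbation applied at one time changes the trajectory at all later times. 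Resolving this requires Hayashi's selection of ``good'' return times and the tiled-cube bookkeeping; the conservative character of the elementary move (divergence-free bump fields / Moser, as you describe) is a routine adaptation by comparison, already present in Pugh--Robinson.

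Second, connecting a point near $W^u(q)$ to a point near $W^s(p)$ by an orbit and then perturbing does not \emph{a priori} force $W^u(q_g)$ to meet $W^s(p_g)$: one must ensure the perturbation moves an actual disc of $W^u(q)$ onto $W^s(p)$ while controlling the continuations of $p$, $q$ and their local invariant manifolds. This is precisely the ``slightly stronger statement'' that Bonatti--Crovisier extracted from Hayashi's proof, and it cannot be obtained by naively applying the orbit-connecting version. Your phrase ``the combinatorial scheme \ldots\ is identical to the one in the original Hayashi proof'' silently absorbs both of these points, so as written the proposal is a pointer to the right references rather than a proof, and the identification of the obstacle should be corrected as above.
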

\smallskip

Lemma \ref{Lem:Onehomclass} ensures the transitivity of the diffeomorphisms we are considering. Combining with the previous lemma, we can prove the following proposition.

\begin{Prop}\label{p.blenderfarfromtangencies}
There is a $C^1$ open and dense subset $\mathcal{O}_1\subset \mathcal{O}$ of diffeomorphisms such that for any $f\in \mathcal{O}_1$, it admits a dominated splitting $E^s\oplus E^{c1}\oplus \cdots \oplus E^{c,c(f)}\oplus E^u$. Moreover, $f$ has a chain of $cs-$blenders of index ($dim(E^s),\cdots,dim(E^s)+c(f)-1$).
\end{Prop}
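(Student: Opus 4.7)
Fix $f\in\mathcal{O}$ and write $i_0=\dim(E^s)$, $c=c(f)$. Property (G1) provides the finest dominated splitting $E^s\oplus E^{c,1}\oplus\cdots\oplus E^{c,c}\oplus E^u$ with one-dimensional centers, and (G2) supplies hyperbolic periodic points $p^{(i)}$ of every index $i\in\{i_0,i_0+1,\ldots,i_0+c\}$. By Lemma~\ref{Lem:Onehomclass} all of these points lie in the unique homoclinic class $M$, so any two of them are, in the chain-recurrence sense, dynamically linked. My plan is to perturb $f$ inductively to install a $cs$-blender of each index in the range $\{i_0,\ldots,i_0+c-1\}$ and then to splice them into a chain using one last round of connecting-lemma perturbations; openness of blenders and of homoclinic relations will then upgrade density to an open-and-dense conclusion.

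Fix $i\in\{i_0,\ldots,i_0+c-1\}$. The indices of $p^{(i)}$ and $p^{(i+1)}$ differ by one, and the one-dimensional subbundle $E^{c,\,i-i_0+1}$ realises the splitting between the central stable directions of $p^{(i+1)}$ and of $p^{(i)}$. Because $M$ is the unique homoclinic class and $f$ is conservative and transitive, the conservative connecting lemma (Lemma~\ref{Lem:connectinglem}) yields, via two successive $C^1$-small volume-preserving perturbations, a heterodimensional cycle between $p^{(i)}$ and $p^{(i+1)}$: first an intersection $W^s(p^{(i)})\pitchfork W^u(p^{(i+1)})\neq\emptyset$, then a nonempty intersection $W^s(p^{(i+1)})\cap W^u(p^{(i)})$. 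Into this cycle I plug the Bonatti--D\'\i az unfolding technique from \cite{BD}, revisited in the conservative partially hyperbolic setting in \cite{HHTU2}: one more $C^1$-small conservative perturbation produces hyperbolic periodic points $q_i$ (index $i$) and $p_i$ (index $i+1$) together with a $cs$-blender of index $i$ associated with $(q_i,p_i)$ in the sense of Definition~\ref{d.blender}. The blender property is $C^1$-robust, and so, by performing the construction for $i=i_0,i_0+1,\ldots,i_0+c-1$ successively and at each step choosing the perturbation smaller than the robustness radius of every blender already built, I obtain a diffeomorphism carrying a $cs$-blender of each required index simultaneously.

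To close up into a chain, note that for every $i\in\{i_0+1,\ldots,i_0+c-1\}$ the points $q_i$ and $p_{i-1}$ are hyperbolic periodic points of identical index $i$ that still sit in the unique homoclinic class $M$. A final application of the conservative connecting lemma produces transverse intersections $W^s(q_i)\pitchfork W^u(p_{i-1})\neq\emptyset$ and $W^u(q_i)\pitchfork W^s(p_{i-1})\neq\emptyset$, i.e.\ the required homoclinic relation. Again each of these perturbations is taken within the robustness threshold of everything constructed so far, so no earlier blender or homoclinic relation is lost. At the end we have a diffeomorphism $g$ arbitrarily $C^1$-close to the original $f$ carrying a chain of $cs$-blenders of indices $(i_0,\ldots,i_0+c-1)$. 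Since each defining condition (partial hyperbolicity with the prescribed finest dominated splitting, existence of each blender, and each homoclinic relation in the chain) is $C^1$-open, the set of diffeomorphisms for which the whole chain persists is open; combined with the density just established, it defines the desired $C^1$-open and dense subset $\mathcal{O}_1\subset\mathcal{O}$.

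The step I expect to be the main obstacle is the conservative version of the Bonatti--D\'\i az blender construction. In the dissipative case one uses a pair of strong contractions in the central stable direction to set up the iterated function system whose invariant Cantor set is the blender; in the volume-preserving setting the companion expansion in the central unstable direction has to be controlled simultaneously, and one must check that the unfolding of the cycle may be realised by a volume-preserving $C^1$-perturbation supported in a small neighbourhood of the orbits involved. This is essentially what is carried out in \cite{HHTU2} under partial hyperbolicity with one-dimensional center, and the role of property (G1) here is precisely to reduce us, at each index $i$, to the one-dimensional central situation to which their argument applies.
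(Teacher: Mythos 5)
Your argument follows the same outline as the paper's: inductively create a $cs$-blender of each index $\dim(E^s),\ldots,\dim(E^s)+c(f)-1$ using a sequence of nested $C^1$-open sets, then use the conservative connecting lemma (combined with the unique-homoclinic-class property from Lemma~\ref{Lem:Onehomclass}) to homoclinically relate the consecutive blender points, and finally invoke robustness of blenders and openness of homoclinic relations to upgrade density to open-and-dense. The difference is that you are substantially more explicit about the blender-creation step: you spell out that (G2) gives periodic points of consecutive indices, that the connecting lemma produces a heterodimensional cycle between them, and that the conservative Bonatti--D\'\i az unfolding of \cite{BD} as adapted in \cite{HHTU2} yields the blender. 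The paper, by contrast, invokes ``By Proposition~\ref{p.blender}'' at this point, which is a misreference --- Proposition~\ref{p.blender} asserts a geometric property of an already-existing blender, not the existence of blenders near $f$ --- so it implicitly relies on the reader knowing the blender-creation theorem of \cite{HHTU2}. Your version therefore supplies the mechanism the paper leaves implicit, and is the more complete of the two; in particular your final paragraph correctly flags the conservative-perturbation aspect of the unfolding as the one point that truly needs \cite{HHTU2}, which is exactly why (G1)'s one-dimensional central subbundles are hypothesised.
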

\smallskip

{\bf Proof:} 
Take $f\in \mathcal{O}$, then $f$ has hyperbolic periodic points with index $dim(E^s)$ and $dim(E^s)+1$. By Proposition~\ref{p.blender}, there is an open set $\mathcal{U}_1\subset \mathcal{O}$ arbitrarily close to $f$, such that any $g\in \mathcal{U}_1$ has a $cs-$blender of index $dim(E^s)$ associated to $(q_{1,g},p_{1,g})$, where $q_{1,g}$ and $p_{1,g}$ denote the continuation of $q_1$ and $p_1$ for $g$. Note that $\mathcal{O}$ is an open set. Fix a diffeomorphism $f_1\in\mathcal{U}_1$ and by the same argument, we can find another open set $\mathcal{U}_2\subset\mathcal{U}_1$ such that any $g\in \mathcal{U}_1$ has a $cs-$blender of index $dim(E^s)$ associated to $(q_{2,g},p_{2,g})$. Inductively, we obtain open sets $\mathcal{U}_{c(f)}\subset\mathcal{U}_{c(f)-1}\subset\cdots \mathcal{U}_1$ such that for any $1\leq i \leq c(f)$ and any $g\in \mathcal{U}_i$, $g$ has a $cs-$blender of index $dim(E^s)+i-1$ associated to ($q_{i,g},p_{i,g}$).

Take $g\in \mathcal{U}_{c(f)}$, by Lemma \ref{Lem:Onehomclass} and using Lemma \ref{Lem:connectinglem} twice, we get a diffeomorphism $g_1\in \mathcal{U}_{c(f)}$ which is arbitrarily close to $g$, such that $p_{1,g_1}$ and $q_{2,g_1}$ are homoclinically related to each other. Do such perturbation $c(f)-1$ times, we obtain a diffeomorphism $g_{c(f)-1}\in \mathcal{U}_{c(f)}$ such that for any $1\leq i \leq c(f)-1$, one has that $p_{i,g_{c(f)-1}}$ and $q_{i+1,g_{c(f)-1}}$ are homoclinically related.

Let $\mathcal{O}_1$ be the subset consisting of all the diffeomorphisms in $\mathcal{O}$ with chains of $cs-$blenders of index ($dim(E^s),\cdots,dim(E^s)+c(f)-1$). Then we verified the density of $\mathcal{O}_1$.
Since homoclinic relation is an open property, it must be an open and dense set as stated in the theorem.
\qed
\bigskip

\begin{Prop}\label{p.chainblender}
Suppose $f$ admits a dominated splitting $E^s\oplus E^{c1}\oplus \cdots \oplus E^{cc}\oplus E^u$ with $dim(E^{ci})=1$ for any $1\leq i \leq c$, and has a chain of blenders $\{(q_i,p_i)\}_{i=1}^{c(f)}$ of index $(dim(E^s),\cdots,$ $dim(E^s)+c-1)$. Then for any $(d-(dim(E^s)+j))-$dimension disk $D$ which is tangent to a cone field around $E^{c,j+1}\oplus\cdots \oplus E^{c,c(f)}\oplus E^u$ and satisfies $D\pitchfork W^s(p_{c(f)})\neq \emptyset$, $1\leq j \leq c(f)$, it holds $$D\pitchfork W^s(q_{j+1})\neq \emptyset \quad\mbox{ and }\quad D\pitchfork W^s(p_{j})\neq \emptyset.$$
\end{Prop}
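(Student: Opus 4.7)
The plan is to establish, by downward induction on $k$ from $c(f)$ to $j$, the intersection property $D\pitchfork W^s(p_k)\neq\emptyset$, using Proposition~\ref{p.blender} to pass from $W^s(p_k)$ to $W^s(q_k)$ and then the inclination lemma to convert this into an intersection with $W^s(p_{k-1})$ via the homoclinic relation $q_k\sim p_{k-1}$ guaranteed by the chain of blenders. The base case $k=c(f)$ is the hypothesis.

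For the inductive step, I would fix a transverse intersection point $x\in D\cap W^s(p_k)$ and extract a sub-disk $D_k\subset D$ of dimension $d-(\dim(E^s)+k-1)$ through $x$ whose tangent space at $x$ lies in a narrow cone around $E^{c,k}\oplus E^{c,k+1}\oplus\cdots\oplus E^{c,c(f)}\oplus E^u$. Such a sub-disk exists because $k\geq j+1$ forces $E^{c,k}\oplus\cdots\oplus E^u$ to be a subspace of $E^{c,j+1}\oplus\cdots\oplus E^u$, so a cone around the former sits inside the cone around the latter in which $TD$ already lives. The dominated splitting prevents the tangent space of $W^s(p_k)$ from meeting a narrow cone around $E^{c,k}\oplus\cdots\oplus E^u$, so $D_k$ still intersects $W^s(p_k)$ transversely at $x$. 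Since $D_k$ now satisfies the hypotheses of Proposition~\ref{p.blender} for the $cs$-blender of index $\dim(E^s)+k-1$ associated to $(q_k,p_k)$, one concludes $D_k\pitchfork W^s(q_k)\neq\emptyset$, and hence $D\pitchfork W^s(q_k)\neq\emptyset$.

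Next, I would invoke the inclination ($\lambda$-) lemma: since $q_k$ is homoclinically related to $p_{k-1}$ by the chain-of-blenders definition, iterates of compact pieces of $W^s(p_{k-1})$ $C^1$-accumulate on $W^s(q_k)$ near a transverse homoclinic point between them. A transverse intersection $D\pitchfork W^s(q_k)$ is therefore stable under $C^1$-approximation, which yields $D\pitchfork W^s(p_{k-1})\neq\emptyset$ at a nearby point, closing the induction. At the terminal step $k=j+1$, the blender produces $D\pitchfork W^s(q_{j+1})\neq\emptyset$ and the $\lambda$-lemma then delivers $D\pitchfork W^s(p_j)\neq\emptyset$, which is exactly the conclusion.

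The main technical obstacle is the sub-disk construction: one must choose $D_k$ so that it simultaneously (a) carries its tangent inside the narrow cone around $E^{c,k}\oplus\cdots\oplus E^u$ required by the blender, (b) still contains a transverse intersection point with $W^s(p_k)$, and (c) has the correct dimension $d-(\dim(E^s)+k-1)$. The dominance between consecutive bundles $E^{c,i}$ is essential here, as it ensures that cones around nested sub-bundles interact compatibly with the stable manifolds $W^s(p_k)$ and $W^s(q_k)$, so that transversality is preserved both during the restriction $D\leadsto D_k$ and during the $C^1$-approximation of $W^s(q_k)$ by $W^s(p_{k-1})$.
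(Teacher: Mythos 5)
Your proposal is correct and follows essentially the same route as the paper: a downward induction on the blender index, extracting at each stage a sub-disk of the appropriate (shrinking) codimension tangent to a progressively narrower cone, applying Proposition~\ref{p.blender} to pass from $W^s(p_k)$ to $W^s(q_k)$, and then the inclination lemma together with the homoclinic relation $q_k\sim p_{k-1}$ to reach $W^s(p_{k-1})$. The only cosmetic difference is that you iterate $W^s(p_{k-1})$ backward onto $W^s(q_k)$ rather than pushing $D$ forward as the paper does, which lets you keep a fixed disk $D$ throughout; both versions rely on domination to control the tangent cones in exactly the way you describe.
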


\smallskip

{\bf Proof:} 
It suffices to show that, there is $n>0$ such that $$f^{n}(D)\pitchfork W^s(q_{j+1})\neq \emptyset\quad\mbox{ and }\quad f^{n}(D)\pitchfork W^s(p_{j})\neq\emptyset.$$ We will only prove the first part, since the second part comes analogously.

For any cone field around $E^{c,j+1}\oplus \cdots \oplus E^{c,c(f)}\oplus E^u$, note that $Df$ preserves this cone field and contracts its area uniformly. Thus,
if $D$ is tangent to a bundle which is tangent to such a cone field
, then so do $f^n(D)$.

We prove this by induction. Since $D\pitchfork W^s(p_{c(f)})\neq \emptyset$, there exists a submanifold $D_0$ in $D$ with dimension $dim(E^u)+1$, which is tangent to a cone field around $E^{c,c(f)}\oplus E^u$ and satisfies $D_0\pitchfork W^s(p_{c(f)})\neq \emptyset$. By Proposition \ref{p.blender}, we have $$D_0\pitchfork W^s(q_{c(f)})\neq \emptyset.$$ Thus by $\lambda$-lemma, there is an integer $n_0>0$, such that $f^{n_0}(D_0)\pitchfork W^s(p_{c(f)-1})\neq \emptyset$, since $q_{c(f)}$ and $p_{c(f)-1}$ are homoclinically related. Then $f^{n_0}(D)\pitchfork W^s(p_{c(f)-1})\neq \emptyset$. There is a submanifold $D_1\subset D$ with dimension $dim(E^u)+2$, such that $D_1$ is tangent to a cone around $E^{c,c(f)-1}\oplus E^{c,c(f)}\oplus E^u$, and $D_1\pitchfork W^s(p_{c(f)-1})\neq \emptyset$. Continue the argument and then we can complete the proof.
\qed
\bigskip

\subsubsection*{$\bullet$ New Criteria of ergodicity}

Let $p$ be a saddle of a $C^{1+\alpha}$ diffeomorphism $f$ and $Orb(p)$ denote the orbit of $p$.
We write $$B^s(p,f)=\{x: W^s(Orb(p))\pitchfork W^u(x)\neq \phi\},$$
 $$B^u(p,f)=\{x: W^u(Orb(p))\pitchfork W^s(x)\neq \phi\}.$$

The following proposition is a direct corollary of Theorem~A of \cite{HHTU} which presents a new criteria of ergodicity.

\begin{Prop}\label{p.criteria}
Let $f: M\longrightarrow M$ be a $C^{1+\alpha}$ diffeomorphism preserving the volume measure. Suppose that $p$ is a saddle of $f$ such that $$\omega(B^s(p,f)),\omega(B^u(p,f))>0\quad\mbox{ and }\quad \omega(M\setminus (B^s(p,f)\cup B^u(p,f)))=0.$$ Then $f$ is ergodic.
\end{Prop}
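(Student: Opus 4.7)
The plan is to derive Proposition~\ref{p.criteria} directly from Theorem~A of \cite{HHTU} through a Hopf-type argument in which the saddle $p$ serves as an ``anchor'' that pins down the Birkhoff averages on both $B^s(p,f)$ and $B^u(p,f)$ to a common value.

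First I would verify that $B^s(p,f)$ and $B^u(p,f)$ are $f$-invariant Borel sets: $f$-invariance of $B^s(p,f)$ follows from $f(W^u(x))=W^u(f(x))$, the $f$-invariance of $W^s(Orb(p))$, and the preservation of transversality under diffeomorphisms (and symmetrically for $B^u(p,f)$). This ensures that the Birkhoff averages appearing later are well-defined $\omega$-a.e.\ on each of these two sets.

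Next, fix a continuous function $\varphi:M\to\mathbb{R}$ and let $\varphi^+$, $\varphi^-$ denote its forward and backward Birkhoff averages; by Birkhoff's theorem, $\varphi^+=\varphi^-$ holds $\omega$-a.e. In the $C^{1+\alpha}$ volume-preserving setting, the absolute continuity of the Pesin stable and unstable laminations---which is the engine of Theorem~A of \cite{HHTU}---makes $\varphi^+$ constant along Pesin stable manifolds and $\varphi^-$ constant along Pesin unstable manifolds, with these constancies propagating across transverse intersections. Applied to $x\in B^s(p,f)$: by definition there exists a transverse intersection $y\in W^u(x)\pitchfork W^s(Orb(p))$, and then
\begin{equation*}
\varphi^-(x)=\varphi^-(y)=\varphi^+(y)=\varphi^{\ast}(p),
\end{equation*}
where $\varphi^{\ast}(p)$ denotes the periodic-orbit average of $\varphi$ along $Orb(p)$. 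Hence $\varphi^+\equiv\varphi^{\ast}(p)$ on a full-$\omega$-measure subset of $B^s(p,f)$, and the symmetric argument yields the same identity on a full-$\omega$-measure subset of $B^u(p,f)$.

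Finally, since $\omega(M\setminus(B^s(p,f)\cup B^u(p,f)))=0$, we conclude $\varphi^+\equiv\varphi^{\ast}(p)$ $\omega$-a.e.\ on $M$; as $\varphi$ ranges over continuous functions (dense in $L^{2}(\omega)$), every $f$-invariant $L^{2}$-function is $\omega$-a.e.\ constant, which is precisely the ergodicity of $f$. The main obstacle is the rigorous passage from leafwise constancy on Pesin stable and unstable manifolds to pointwise equality at the transverse intersection point $y$; this requires the absolute continuity of the non-uniformly hyperbolic laminations under the $C^{1+\alpha}$ hypothesis, which is exactly the content supplied by Theorem~A of \cite{HHTU}, together with the positivity hypotheses $\omega(B^s(p,f)),\omega(B^u(p,f))>0$ needed to guarantee that the saddle $p$ does anchor a full-measure portion of the phase space.
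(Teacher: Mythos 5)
Your proposal departs from the paper in an important way: the paper does not prove Proposition~\ref{p.criteria} at all, it simply invokes Theorem~A of \cite{HHTU} as a black box (that theorem says: if $\omega(B^s(p,f))>0$ and $\omega(B^u(p,f))>0$, then $B^s(p,f)=B^u(p,f)\pmod 0$ and $f$ restricted to that set is an ergodic, nonuniformly hyperbolic component; the full-measure hypothesis on the union then immediately gives ergodicity). You instead attempt to reconstruct the underlying Hopf argument from scratch, which amounts to re-proving the substance of Theorem~A rather than using it.

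The reconstruction has a genuine gap at the central link of your chain
\[
\varphi^-(x)=\varphi^-(y)=\varphi^+(y)=\varphi^{\ast}(p).
\]
The first and third equalities are fine: $y\in W^u(x)$ gives $\varphi^-(x)=\varphi^-(y)$, and $y\in W^s(Orb(p))$ gives $\varphi^+(y)=\varphi^{\ast}(p)$. But the middle equality $\varphi^-(y)=\varphi^+(y)$ only holds for Birkhoff-regular $y$, and $y$ is a \emph{specific} point in $W^u(x)\cap W^s(Orb(p))$, which is a discrete (hence $\omega$-null) set; there is no a~priori reason for $y$ to be regular. Your phrase ``constancies propagating across transverse intersections'' is exactly where the difficulty is being swept under the rug: the content of Theorem~A of \cite{HHTU} is precisely the machinery (recurrence to Pesin blocks, absolute continuity of the stable/unstable laminations applied to positive-measure transversals rather than to single intersection points, and the $\lambda$-lemma to push nearby unstable disks onto $W^u(Orb(p))$) that makes this propagation legitimate. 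Without replicating that mechanism — or simply citing the theorem as the paper does — the argument as written does not close. A secondary, more minor issue: you write that absolute continuity ``makes $\varphi^+$ constant along Pesin stable manifolds''; that leafwise constancy is actually a trivial consequence of the definition of the stable manifold and does not require absolute continuity, which is needed for the measure-theoretic step, not the pointwise one.
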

\smallskip

Together with the following result of Avila, we will prove Lemma \ref{l.denseergodic} and then complete the proof of Theorem \ref{t.ergodicfarfromtangency}.

\begin{Lem}(Theorem 1 in \cite{AV})\label{Lem:denseofC2}\,\,
$C^\infty$ diffeomorphisms are dense in $\Diff^1_\omega(M)$.
\end{Lem}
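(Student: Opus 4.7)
The result asserts that given any $f\in\Diff^1_\omega(M)$ and any $\varepsilon>0$, one can find $g\in\Diff^\infty_\omega(M)$ with $\|g-f\|_{C^1}<\varepsilon$. The natural two-step strategy is first to smooth $f$ (ignoring conservativity) and then correct the resulting smooth map back to volume-preserving by a small perturbation. My plan would be to begin with a standard \emph{mollification}: using a partition of unity subordinate to a finite cover of $M$ by coordinate charts and convolving the coordinate expressions of $f$ with a smooth mollifier $\rho_\delta$, one obtains $\tilde f\in\Diff^\infty(M)$ with $\|\tilde f-f\|_{C^1}\to 0$ as $\delta\to 0$; the fact that $\tilde f$ is a diffeomorphism for $\delta$ small follows because $Df$ is uniformly invertible and mollification converges uniformly in $C^1$.

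The second step is the heart of the matter. The pullback $\tilde f^{*}\omega$ is a smooth volume form with $\int_M \tilde f^{*}\omega=\int_M\omega$, so one may write $\tilde f^{*}\omega=(1+u)\,\omega$ for a smooth function $u:M\to\mathbb{R}$ with $\int_M u\,d\omega=0$. Since $f^{*}\omega=\omega$ and mollification is continuous in $C^1$, $u$ is small in $C^{0}$. I would then invoke a quantitative Moser--Dacorogna theorem to produce a smooth diffeomorphism $h:M\to M$, isotopic to the identity, with $h^{*}\omega=(1+u)\,\omega$ and $\|h-\mathrm{id}\|_{C^1}$ controlled by appropriate norms of $u$. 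Setting $g=\tilde f\circ h^{-1}$, one checks $g^{*}\omega=(h^{-1})^{*}\tilde f^{*}\omega=(h^{-1})^{*}\!\bigl((1+u)\omega\bigr)=\omega$, so $g\in\Diff^{\infty}_\omega(M)$, and $\|g-f\|_{C^1}$ can be made less than $\varepsilon$ by choosing $\delta$ sufficiently small.

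The principal obstacle lies in the quantitative control at the Moser step. The construction of $h$ proceeds by solving the divergence equation $\mathrm{div}_\omega(X)=-u$ for a vector field $X$ and integrating a time-dependent flow built from $X$; to deduce $\|h-\mathrm{id}\|_{C^1}$ small from $\|u\|_{C^0}$ small, one needs a solution operator $u\mapsto X$ that behaves well in low-regularity norms. Standard elliptic theory supplies $\|X\|_{C^{1,\alpha}}\leq C\|u\|_{C^{\alpha}}$, which loses no derivative in H\"older scales but does not carry across integer regularity. The fix, which is the technical core of Avila's argument, is to exploit the smoothness of $u$ (which is $C^\infty$, even though only $C^{0}$-small) through a tame-interpolation scheme: one accepts that higher $C^k$ norms of $u$ may be large, but shows they grow only polynomially in $\delta^{-1}$, while the $C^0$ norm shrinks fast enough to dominate, so a chosen balance of $\delta$ against the regularity loss yields the required $C^1$-small Moser correction $h$. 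With this quantitative Moser step secured, combining it with the initial mollification closes the argument.
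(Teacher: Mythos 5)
The paper does not give a proof of this lemma; it is Theorem~1 of Avila's paper \cite{AV} and is invoked here purely as a citation, so your proposal is being compared with Avila's argument rather than with anything in the present text.

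Your overall plan --- mollify $f$ to a smooth diffeomorphism $\tilde f$ that is $C^1$-close to $f$, then correct the volume error by a Moser-type isotopy $h$ and set $g=\tilde f\circ h^{-1}$ --- is the natural first attempt and the right framework, and you correctly locate the danger at the Moser step. However, the interpolation scheme you sketch to close that step has a genuine gap. Writing $u_\delta=\mathrm{Jac}(\tilde f_\delta)-1$ in a chart (using $\mathrm{Jac}(f)=1$), one has $\|u_\delta\|_{C^0}\lesssim\eta(\delta)$ where $\eta$ is the modulus of continuity of $Df$, while $\|u_\delta\|_{C^1}\lesssim\delta^{-1}$. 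Interpolation gives $\|u_\delta\|_{C^\alpha}\lesssim\eta(\delta)^{1-\alpha}\delta^{-\alpha}$, and for $f$ merely $C^1$ the modulus $\eta$ carries no quantitative rate: if, say, $\eta(\delta)\sim 1/\log(1/\delta)$, the bound diverges for every fixed $\alpha>0$, and letting $\alpha\to0$ with $\delta$ does not help because the Schauder constant in $\|X\|_{C^{1,\alpha}}\leq C(\alpha)\|u\|_{C^\alpha}$ blows up as $\alpha\to0$ and cancels the gain. So the claim that ``the $C^0$ norm shrinks fast enough to dominate'' is exactly what cannot be established for a general $C^1$ map. This is not a technicality --- it is the reason the $r=1$ case remained open after Zehnder's higher-regularity treatments, and it is not how Avila proceeds: his proof avoids the loss-of-derivative versus no-rate-of-mollification conflict by a different mechanism rather than by a single global Moser isotopy estimated via interpolation. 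As written, your argument is incomplete precisely where the real difficulty of the theorem lies.
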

\bigskip

{\bf Proof of Lemma~\ref{l.denseergodic}}
Recall that the set $\mathcal{O}_1\subset \mathcal{O}$ is given in Proposition~\ref{p.blenderfarfromtangencies}. We will use Proposition~\ref{p.criteria} to prove that any $C^{1+\alpha}$ volume preserving diffeomorphism $f\in \mathcal{O}_1$ is ergodic. Since $C^\infty$ volume preserving diffeomorphisms are dense in the $C^1$ topology by Lemma \ref{Lem:denseofC2}, we then can conclude that ergodic diffeomorphisms are dense in $\mathcal{O}_1$.

Combining Proposition~\ref{p.finestsplittingfarfromtangencies} and Proposition~\ref{p.blenderfarfromtangencies}, we know that $f\in \mathcal{O}_1 $ satisfies the following properties.
\begin{itemize}
\item[(G1)] $f$ is partially hyperbolic, it admits a partially hyperbolic splitting admits a finest dominated splitting
$E^s\oplus E^{c1}\oplus \cdots \oplus E^{c,c(f)}\oplus E^u$, where $dim(E^{ci})=1$.

\item[(G2')] $f$ has a chain of $cs-$blenders of index $(dim(E^s),\cdots,dim(E^s)+c(f)-1)$ and a chain of $cu-$blenders of index $(dim(E^s)+1,\cdots,dim(E^s)+c(f))$.

\item[(G3)] $f$ is accessible.

\item[(G4)] There is $0\leq j(f) \leq c(f)$, such that for any integer $1\leq j\leq j(f)$, one has
$$\int \log \|Df|_{E^{c,j}(x)}\|d\omega(x)<0,$$ and for any integer $j(f)+1\leq j\leq c(f)$, we has $$\int \log \|Df|_{E^{c,j}(x)}\|d\omega(x)>0.$$
When $j(f)=0$ (resp. $j(f)=c(f)$), we take $E^{c,j(f)}(x)$ (resp. $E^{c,j(f)+1}(x)$) to be vanished.

\end{itemize}

At first, we consider two special cases: $j(f)=0$ and $j(f)=c(f)$. One can show the ergodicity of $f$ by a theorem of Burns-Dolgopyat-Pesin \cite{BDP}.

\begin{Lem}[Theorem 4 in \cite{BDP}]\label{Lem:BDP-ergodic} Let $f$ be a $C^{1+\alpha}$ partially hyperbolic diffeomorphism preserving a smooth measure. Assume that
$f$ is accessible and has negative center Lyapunov exponents on a set of positive
measure. Then $f$ is stably ergodic.
\end{Lem}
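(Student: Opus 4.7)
\textbf{Proof proposal for Lemma \ref{Lem:BDP-ergodic}.}

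The plan is to use Pesin theory to upgrade the negative center exponents into an absolutely continuous lamination of ``extended stable'' manifolds, then run a Hopf-type argument in which accessibility is the engine that converts local ergodicity into global ergodicity. Let $\Lambda=\{x\in M:\text{every center Lyapunov exponent of }f\text{ at }x\text{ is negative}\}$; by hypothesis $\omega(\Lambda)>0$, and the upper semi-continuity of integrated Lyapunov exponents (together with accessibility being $C^1$-open in this setting) gives the analogous set with positive measure for every $C^1$-small perturbation, so proving ergodicity of $f$ itself will automatically yield stable ergodicity.

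The first step is to apply Pesin's stable manifold theorem, using the $C^{1+\alpha}$ hypothesis, to produce for Lebesgue-a.e.\ $x\in\Lambda$ a local ``extended'' Pesin stable manifold $W^{s}_{\mathrm{Pes}}(x)$ tangent to $E^s(x)\oplus E^c(x)$, i.e.\ integrating the uniformly contracted bundle together with the non-uniformly contracted center. These manifolds form a measurable, absolutely continuous lamination refining the continuous strong stable foliation $\mathcal{W}^s$. The second step is the Hopf argument: for any continuous $\varphi:M\to\mathbb{R}$ the forward Birkhoff average $\varphi^+$ is constant along leaves of $\mathcal{W}^u$, the backward average $\varphi^-$ is constant along leaves of $W^{s}_{\mathrm{Pes}}$, and $\varphi^+=\varphi^-$ almost everywhere by $\omega$-invariance. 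The third step exploits accessibility: since $W^{s}_{\mathrm{Pes}}(x)\supseteq\mathcal{W}^s(x)$ for $x\in\Lambda$, every $su$-path issuing from a Pesin-regular point becomes a path alternating $\mathcal{W}^u$-leaves and Pesin stable leaves. Bi-saturating a level set of $\varphi^+$ by the pair $(\mathcal{W}^u,W^{s}_{\mathrm{Pes}})$ therefore sweeps out a full-measure set, forcing $\varphi^+$ to be $\omega$-a.e.\ constant and hence $f$ ergodic.

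The main obstacle is reconciling the purely measurable Pesin lamination with the continuous strong foliations \emph{along} the accessibility paths: one needs the holonomies to preserve null sets even though $W^{s}_{\mathrm{Pes}}$ is defined only on a Pesin-regular set. The standard remedy is to restrict attention to Pesin blocks, where the Pesin manifolds have uniform size and depend continuously on the basepoint, and then to carry out a Fubini-type argument inside foliated boxes around each $su$-path --- in spirit the same device as Burns--Wilkinson's juliennes, but easier here because finite-length $su$-paths already sweep out positive-measure tubes meeting the Pesin-regular set of any prescribed density.
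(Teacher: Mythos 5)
The paper does not prove this lemma at all: it is quoted verbatim as Theorem~4 of Burns--Dolgopyat--Pesin~\cite{BDP} and used as a black box, so there is no internal argument to compare yours against. That said, your sketch is a plausible reconstruction of the general shape of the BDP proof (Pesin stable manifolds tangent to $E^s\oplus E^c$, a Hopf-type argument, accessibility promoting local to global ergodicity), but it contains a genuine error and two serious gaps.

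The error: you have the Hopf argument oriented backwards. For a continuous $\varphi$, the \emph{forward} Birkhoff average $\varphi^+$ is constant along \emph{stable} manifolds (forward orbits of $s$-equivalent points converge, so the Cesàro averages agree), and the \emph{backward} average $\varphi^-$ is constant along \emph{unstable} manifolds. You wrote the opposite. This matters structurally, because the whole point of the negative-center hypothesis is precisely that the Pesin stable manifolds swell to dimension $\dim E^s+\dim E^c$ while the Pesin unstable manifold collapses to the strong unstable leaf $\mathcal{W}^u(x)$; the Hopf argument then pairs $\varphi^+$ on $W^s_{\mathrm{Pes}}$ against $\varphi^-$ on $\mathcal{W}^u$, not the other way around.

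The gaps: first, the jump from ``ergodic'' to ``stably ergodic'' via ``upper semi-continuity of integrated exponents $+$ openness of accessibility'' is asserted, not argued. Accessibility alone is not $C^1$-open (stable accessibility is what persists), and upper semicontinuity of the integrated exponent does not by itself prevent the positive-measure set of negative-center points from shrinking to measure zero under perturbation; in BDP this is handled by first showing that this set is essentially $su$-saturated and hence of \emph{full} measure by essential accessibility, which is the actual mechanism. Second, the genuinely difficult technical step --- absolute continuity of the Pesin stable lamination and control of its holonomies along $su$-paths, done on Pesin blocks --- you name as ``the main obstacle'' and then wave at ``in spirit the same device as Burns--Wilkinson's juliennes.'' This is the heart of the BDP proof and cannot be left at the level of an analogy; without it, the bi-saturation step that you rely on to conclude $\varphi^+$ is a.e.\ constant is unjustified.
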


From now on, we assume that $0<j(f)<c(f)$. Denote $\lambda^c_i(x)$ ($1\leq i \leq c(f)$) to be the center Lyapunov exponent of $x$ along the center bundle $E^{ci}$. And denote $$\Lambda^{cs}=\{x|\lambda^c_{j(f)}(x)<0\}\quad\mbox{ and }\quad\Lambda^{cu}=\{x|\lambda^c_{j(f)+1}(x)>0\}.$$
Because $$\int \log \|Df|_{E^{c,j(f)}(x)}\|d\omega(x)<0\quad\mbox{ and }\quad\int \log \|Df|_{E^{c,j(f)+1}(x)}\|d\omega(x)>0,$$ one has $\omega(\Lambda^{cs})>0$, $\omega(\Lambda^{cu})>0$. Moreover, for $\omega-a.e.\,\,x$, note that the subbundle $E^{c,j(f)}(x)$ is dominated by the other bundle $E^{c,j(f)+1}(x)$ and hence at least one of the Lyapunov exponents $\lambda^c_{j(f)}(x)$ and $\lambda^c_{j(f)+1}(x)$ should be nonzero. Thus we have that $\Lambda^{cs}\cup \Lambda^{cu}$ has full volume.

Recall that $f$ has a chain of $cs-$blenders of index $(dim(E^s),\cdots,dim(E^s)+c(f)-1)$ by $(G2')$, denoted by $\{q_{i},p_{i}\}_{i=1}^{c(f)}$. Then for $i=j(f)$, $f$ has a $cs-$blender of index $dim(E^s)+j(f)-1$ associated to ($q_{j(f)},p_{j(f)}$). By Proposition~\ref{p.criteria}, Lemma~\ref{l.denseergodic} is a corollary of the following facts: $\omega-$almost every point of $\Lambda^{cu}$ belongs to $B^s(p_{j(f)},f)$ and $\omega-$almost every point of $\Lambda^{cs}$ belongs to $B^u(p_{j(f)},f)$. We only prove the first part and the proof of the second part is similar.

By Proposition~\ref{p.stablemanifoldtheorem}, $\omega-$almost every point $x$ in $\Lambda^{cu}$ has local unstable manifold $W^{u}_{loc}(x)$ of dimension $d-(dim(E^{cs})+j(f))$ and moreover, $W^{u}_{loc}(x)$ is tangent to the bundle $E^{c,j(f)+1}\oplus\cdots\oplus E^{c,c(f)}\oplus E^u$.

 Note that $f$ has a $cs-$blender of index $dim(E^s)+c(f)-1$ associated to $(q_{c(f)},p_{c(f)})$. Because $f$ is accessible, the orbit of almost every point is dense in the ambient manifold $M$. We can assume that the point $x$ is arbitrarily close to the periodic points $p_{c(f)}$ of index $dim(E^s)+c(f)$.
Thus the strong unstable leaf at $x$ should intersect the stable manifold at $p_{c(f)}$ transversely, i.e., $\mathcal{W}^u(x)\pitchfork W^s(p_{c(f)})\neq \emptyset$. Since $x\in\Lambda^{cu}$, by Remark~\ref{r.stablemanifold}, we have that $\mathcal{W}^u(x)\subset W^u_{loc}(x)$. Then $W^u_{loc}(x)\pitchfork W^s(p_{c(f)})\neq \emptyset$.
Now it follows from Proposition~\ref{p.chainblender} that $W^u_{loc}(x)\pitchfork W^s(p_{j(f)})\neq \emptyset$ by taking $D=W^u_{loc}(x)$. Hence $x\in B^s(p_{j(f)})$ and we conclude the proof.
\qed
\bigskip

{\bf Proof of Theorem \ref{t.ergodicfarfromtangency}}

Since ergodic diffeomorphisms form a $G_\delta$ set, Lemma \ref{l.denseergodic} implies that there is a generic subset $\mathcal{E}$ among the set of volume preserving diffeomophisms far from tangencies, such that any diffeomorphism $f\in \mathcal{E}$ is ergodic.

We take $\mathcal{R}=\mathcal{O}\cap \mathcal{E}$ and, together with Proposition \ref{p.finestsplittingfarfromtangencies}, conclude the proof.
\qed

\subsection{The proof of  Theorem \ref{Thm:interiorofHT}}

Before proving Theorem \ref{Thm:interiorofHT}, we need the following lemma:
\smallskip

\begin{Lem}\label{Lem:ergodic}(Theorem 0.1 in \cite{BW})\,\,
Let $f$ be $C^2$, volume-preserving, partially hyperbolic and center
bunched. If $f$ is essentially accessible, then $f$ is ergodic, and in fact has
the Kolmogorov property.
\end{Lem}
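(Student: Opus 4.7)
The plan is to combine the biconditional in Lemma~\ref{Lem:farfromtangency} (used now in its ``not far from tangencies'' direction) with the semicontinuity-of-exponents argument already deployed in the proof of Theorem~\ref{Thm:nonzeroexp}, and then to invoke the Burns--Wilkinson ergodicity criterion from Lemma~\ref{Lem:ergodic}. I would take the candidate residual set $\mathcal{R} \subseteq \mathcal{CPH}^1_\omega(M) \cap \mathrm{int}(\overline{HT})$ to be the intersection of: the generic set $G_\omega$ from Lemma~\ref{Lem:Onehomclass} (transitivity, $M$ the unique homoclinic class); the generic set from Lemma~\ref{Lem:farfromtangency}; the generic set of stably accessible diffeomorphisms from Lemma~\ref{Lem:denseofstabacc}; the continuity points of the upper-semicontinuous map
$$g \mapsto \left(\int_M \Lambda_1(g,x)\,d\omega(x),\ldots,\int_M \Lambda_d(g,x)\,d\omega(x)\right);$$
and the ergodic diffeomorphisms, which form a $G_\delta$ subset in the $C^1$ conservative topology.

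For the coincidence of two center exponents, fix $f \in \mathcal{R}$. Since $f \in \mathrm{int}(\overline{HT})$, $f$ is not far from tangencies, and since $f$ is partially hyperbolic the dominated splitting $E^s \oplus E^c \oplus E^u$ with nontrivial extremes exists. The biconditional in Lemma~\ref{Lem:farfromtangency} therefore forces the finest dominated splitting of $E^c$ to contain some subbundle $E^{ci}$ with $\dim E^{ci} \geq 2$. Pick an integer $p$ strictly between $s + c_1 + \cdots + c_{i-1}$ and $s + c_1 + \cdots + c_i$. I claim $\Gamma_p(f,\infty)$ has full $\omega$-measure: otherwise $\omega(\mathcal{D}_p(f,m)) > 0$ for some $m$, and stable accessibility would spread the index-$p$ dominated splitting along dense orbits to all of $M$ (exactly as in the proof of Theorem~\ref{Thm:nonzeroexp}), contradicting the finestness of the splitting of $E^c$. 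Lemma~\ref{Lem:glob}, applied at the continuity point $f$, then forces $J_p(f) = 0$, so $\lambda_p(f,x) = \lambda_{p+1}(f,x)$ for $\omega$-a.e.\ $x$; these are two equal center Lyapunov exponents.

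For ergodicity I would use the Burns--Wilkinson criterion (Lemma~\ref{Lem:ergodic}): the diffeomorphism $f$ is center bunched by the assumption $f \in \mathcal{CPH}^1_\omega(M)$ and stably accessible, hence essentially accessible, by construction. Since Lemma~\ref{Lem:ergodic} needs $C^{1+\alpha}$ regularity, I would first establish $C^1$ density of ergodic diffeomorphisms in $\mathcal{CPH}^1_\omega(M) \cap \mathrm{int}(\overline{HT})$: given any diffeomorphism in this set, the $C^1$ density of $C^\infty$ conservative diffeomorphisms (Lemma~\ref{Lem:denseofC2}) provides a $C^\infty$ approximation still lying in the same $C^1$-open set (partial hyperbolicity, center bunching and stable accessibility are $C^1$-open properties), to which Lemma~\ref{Lem:ergodic} then applies. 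Together with the $G_\delta$ character of ergodicity for conservative $C^1$ diffeomorphisms, this density upgrades to residuality.

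The main obstacle is the first part of the second paragraph: one must verify carefully that within the bundle $E^{ci}$ of dimension $\geq 2$, the accessibility step genuinely promotes ``positive-measure index-$p$ domination'' to ``full-measure index-$p$ domination'', so that the Lemma~\ref{Lem:glob} machinery delivers $J_p(f) = 0$ at the continuity point. A secondary technical point, harmonizing Burns--Wilkinson's $C^{1+\alpha}$ hypothesis with a $C^1$-residual conclusion, is routine but requires the $C^\infty$ density lemma together with the $G_\delta$ nature of ergodicity, as sketched above.
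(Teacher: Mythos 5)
You have not proved the statement in question. The statement to establish is precisely the Burns--Wilkinson ergodicity theorem: a $C^2$, volume-preserving, partially hyperbolic, center-bunched, essentially accessible diffeomorphism is ergodic (indeed Kolmogorov). In the paper this is Lemma~\ref{Lem:ergodic}, stated as a citation of Theorem~0.1 in~\cite{BW}; the paper gives no proof of it, because it is a deep external theorem whose demonstration (the Hopf argument run through julienne density points, quasi-isometry estimates under center bunching, etc.) occupies the entirety of the Burns--Wilkinson paper.

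Your argument is circular at the decisive step: you explicitly ``invoke the Burns--Wilkinson ergodicity criterion from Lemma~\ref{Lem:ergodic}'' in order to obtain ergodicity, which is to assume the very statement you were asked to prove. What you have actually sketched is a proof of Theorem~\ref{Thm:interiorofHT} (a $C^1$-residual subset of $\mathcal{CPH}^1_\omega(M)\cap\mathrm{int}(\overline{HT})$ on which the diffeomorphisms are ergodic with two equal center exponents). Indeed your outline parallels the paper's own proof of that theorem: use Lemma~\ref{Lem:denseofstabacc} and Lemma~\ref{Lem:denseofC2} to get $C^1$-dense $C^2$ accessible maps, invoke Lemma~\ref{Lem:ergodic} for ergodicity, intersect with the $G_\delta$ of ergodic diffeomorphisms and the residual set from Lemma~\ref{BV}, and use the failure of full $1$-dimensional domination together with Lemma~\ref{Lem:farfromtangency} to force a coincidence of center exponents. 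That reasoning is fine as a treatment of Theorem~\ref{Thm:interiorofHT}, but it does not and cannot serve as a proof of Lemma~\ref{Lem:ergodic} itself; there is nothing in the present paper's toolkit with which to derive the Burns--Wilkinson theorem, and none of Lemmas~\ref{Lem:Onehomclass}, \ref{Lem:farfromtangency}, \ref{Lem:denseofstabacc}, \ref{Lem:glob} or \ref{Lem:denseofC2} does the dynamical work that the Hopf-type argument in~\cite{BW} performs.
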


\bigskip

{\bf Proof of Theorem \ref{Thm:interiorofHT}:}\,\,For $\forall f\in \mathcal{CPH}^1_\omega(M)$, by Lemma \ref{Lem:denseofstabacc},
there is a stably accessible perturbation $f_1\in \mathcal{CPH}^1_\omega(M)$. Applying Lemma \ref{Lem:denseofC2},
we can obtain anther perturbation $f_2$ which is $C^2$ and accessible.
Using Lemma \ref{Lem:ergodic}, $f_2$ is ergodic. Hence the subset consisting of all the ergodic diffeomorphisms is $C^1$ dense in $\mathcal{CPH}^1_\omega(M)$. Since it is well-known that this subset is a $G_\delta$ set, we denote its intersection with $int(\overline{HT})$ and the residual set determined in Lemma \ref{BV} by $\r$ as the residual set we need.

For any $f\in \mathbb{R}$, in the finest dominated splitting, there are
center bundle with dimension larger than 1 by Lemma
\ref{Lem:farfromtangency}. Then the center exponents along this
bundle equal, for, otherwise, suppose the exponents are different,
then by Lemma \ref{BV}, for almost every point, along the
orbit, there is a dominated splitting. Because $f$ is ergodic, this
dominated splitting can be extended on the whole manifold. This
contradicts our assumption that this splitting is a finest dominated
splitting.

\qed

\bigskip

{\bf{Acknowledgement.}}\,\,The authors thank Viana with whom we
talked the example in Section 3, and thank Wen and Gan for their
useful comments.
\bigskip

\bigskip

\end{document}